\theoremstyle{plain}
\newcommand{\cleqn}{\setcounter{equation}{0}}
\newcommand{\clth}{\setcounter{theorem}{0}}
\newcommand {\sectionnew}[1]{\section{#1}\cleqn\clth}
\newtheorem{theorem}{Theorem}[section]
\newtheorem{lemma}[theorem]{Lemma}
\newtheorem{definition-theorem}[theorem]{Definition-Theorem}
\newtheorem{proposition}[theorem]{Proposition}
\newtheorem{corollary}[theorem]{Corollary}
\newtheorem{definition}[theorem]{Definition}
\newtheorem{example}[theorem]{Example}
\newtheorem{remark}[theorem]{Remark}
\newtheorem{notation}[theorem]{Notation}
\newtheorem{assumption}[theorem]{Assumption}
\newtheorem{lemma-definition}[theorem]{Lemma-Definition}
\newtheorem{lemma-notation}[theorem]{Lemma-Notation}
\newtheorem{question}[theorem]{Question}
\newtheorem{remark-definition}[theorem]{Remark-Definition}
\newcommand \bth[1] { \begin{theorem}\label{t#1} }
\newcommand \ble[1] { \begin{lemma}\label{l#1} }
\newcommand \bpr[1] { \begin{proposition}\label{p#1} }
\newcommand \bco[1] { \begin{corollary}\label{c#1} }
\newcommand \bde[1] { \begin{definition}\label{d#1}\rm }
\newcommand \bex[1] { \begin{example}\label{e#1}\rm }
\newcommand \bre[1] { \begin{remark}\label{r#1}\rm }
\newcommand \bnota[1] {\begin{notation}\label{n#1}\rm }
\newcommand \bas[1] { \begin{assumption}\label{a#1}\rm }
\newcommand \bqu[1] { \begin{question}\label{q#1}\rm }
\newcommand {\ele} { \end{lemma} }
\newcommand {\epr} { \end{proposition} }
\newcommand {\eco} { \end{corollary} }
\newcommand {\ede} { \end{definition} }
\newcommand {\eex} { \end{example} }
\newcommand {\ere} { \end{remark} }
\newcommand {\enota} { \end{notation} }
\newcommand {\eas} {\end{assumption}}
\newcommand {\equ} {\end{question}}
\newcommand \lb[1]{\label{#1}}
\newcommand{\beqa}{\begin{eqnarray*}}                     
\newcommand{\eeqa}{\end{eqnarray*}}
\def \sC {{\scriptscriptstyle C}}
\def \wF_mn {\wF_m \times \wF_n}
\def \wF_mnC {\wF_{m, n, \, \sC}}
\def \wF {\widetilde{F}}
\newcommand\blfootnote[1]{%
  \begingroup
  \renewcommand\thefootnote{}\footnote{#1}%
  \addtocounter{footnote}{-1}%
  \endgroup
}
\begin{document}

\setlength{\baselineskip}{1.2\baselineskip}
\title[Manin triples, bialgebras and Yang-Baxter equation of $A_3$-associative algebras]
{Manin triples, bialgebras and Yang-Baxter equation of $A_3$-associative algebras}
\author{Yaxi Jiang}
\address{
School of Mathematical Sciences    \\
Zhejiang Normal University\\
Jinhua 321004              \\
China}
\email{jiangyx@zjnu.edu.cn}

\author{Chuangchuang Kang}
\address{
School of Mathematical Sciences    \\
Zhejiang Normal University\\
Jinhua 321004              \\
China}
\email{kangcc@zjnu.edu.cn}

\author{Jiafeng L\"u}
\address{
School of Mathematical Sciences    \\
Zhejiang Normal University\\
Jinhua 321004              \\
China}
\email{jiafenglv@zjnu.edu.cn}

\blfootnote{*Corresponding Author: Chuangchuang Kang. Email: kangcc@zjnu.edu.cn.}

\date{}
\begin{abstract}
$A_3$-associative algebra is a generalization of associative algebra and is one of the four remarkable types of Lie-admissible algebras, along with  associative algebra, left-symmetric algebra and right-symmetric algebra. This paper develops bialgebra theory for $A_3$-associative algebras. We introduce Manin triples and bialgebras for $A_3$-associative algebras, prove their equivalence using matched pairs of $A_3$-associative algebras, and define the $A_3$-associative Yang-Baxter equation and triangular $A_3$-associative bialgebras. Additionally, we introduce relative Rota-Baxter operators to provide skew-symmetric solutions of the $A_3$-associative Yang-Baxter equation.
\end{abstract}

\subjclass[2010]{
16T10,
16T25,
16W10, 
17A30, 
17B62. 
}
\keywords{Lie-admissible algebra; $A_3$-associative algebra; $A_3$-associative bialgebra; $A_3$-associative Yang-Baxter equation; relative Rota-Baxter operator}

\maketitle
\tableofcontents
\allowdisplaybreaks
\sectionnew{Introduction }\lb{intro}
\subsection{$A_3$-associative algebras and Lie-admissible algebras}
The notion of an $A_3$-associative algebra was  first given by Goze and Remm in \cite{Goze1}. It generalizes associative algebra and is one of the four remarkable types of Lie-admissible algebras, along with associative algebra, left-symmetric algebra and right-symmetric algebra \cite{Goze1}. Lie-admissible algebras play a significant role in exploring the symmetries of certain particles and have various applications in physics and mathematics, we refer to \cite{Albert,Ammar,Bai-Chen,Benkart2,Benkart1,ChenY,Jeong,Makhlouf} for more interesting research on Lie-admissible algebras.

An $A_3$-associative algebra is defined as an algebra $(A,\cdot)$ that satisfies the $A_3$-associative law:
\begin{equation*} 
  (x \cdot y)\cdot z +(y \cdot z)\cdot x +(z \cdot x)\cdot y = x\cdot (y \cdot z) +y\cdot (z \cdot x) +z\cdot (x \cdot y), \quad \forall~ x,y,z\in A.
\end{equation*}
This law can also be expressed using associators as:
 $$
 \sum_{\sigma\in A_3}(a_{\sigma(1)},a_{\sigma(2)},a_{\sigma(3)})=0,
 $$
where $a_1, a_2, a_3 \in A$, and $A_3$ denotes the alternating group of degree $3$. 
The $A_3$-associative law is called the $G_5$-associative law in \cite{Goze1}. 
The symmetric form above provides the basis for the terminology ``$A_3$'' and  has been instrumental in constructing  $A_3$-dendriform algebra \cite{Ospel}.
Associative algebras are a special case of $A_3$-associative algebras. By defining a skew-symmetric bilinear map $[\cdot,\cdot]: A \times A \rightarrow A$ as $[x,y] = x \cdot y - y \cdot x$, where $\cdot$ satisfies the $A_3$-associative law, the structure $(A, [\cdot,\cdot])$ becomes a Lie algebra. 
Furthermore, admissible Poisson algebras,  which belong to the class of $A_3$-associative algebras \cite{Goze2}, have been studied by many scholars from different aspects \cite{Liang,Remm1}. 

\subsection{The bialgebra theory for Lie-admissible algebras}
The Lie bialgebra consists of a Lie algebra and a Lie coalgebra satisfying the 1-cocycle condition, which plays an important role in the field of mathematical physics \cite{Chari,Drinfeld2}. For example, Lie bialgebras provide a valuable method for constructing solutions of the classical Yang-Baxter equation \cite{Drinfeld1}. In recent years, many scholars have also explored other types of bialgebra structures, such as Novikov-Poisson bialgebras \cite{LiB}, Rota-Baxter bialgebras \cite{BaiC5}, anti-pre Lie bialgebras \cite{LiuG1}, Mock-Lie bialgebras \cite{Benali}, Leibniz bialgebras \cite{Rezaei,Tang} and Poisson bialgebras \cite{Ni}.

Regarding the bialgebra theory for Lie-admissible algebras, bialgebra theories have been developed for specific examples of Lie-admissible algebras, including left-symmetric (or right-symmetric) bialgebras \cite{BaiC6} and associative bialgebras \cite{Joni}. $A_3$-associative algebras are a special class of Lie-admissible algebras,
which raises the natural question: do $A_3$-associative algebras possess corresponding bialgebras?

By drawing an analogy with Lie bialgebras, the author studied the bialgebras of left-symmetric algebras, which are equivalent to the matched pairs and symplectic doubles \cite{BaiC6}. Building on this foundational work, further studies have emerged, such as Hom-left-symmetric bialgebras \cite{SunQ}, left-symmetric conformal bialgebras \cite{HongY1} and Hom-left-symmetric conformal bialgebras \cite{GuoS}.

In \cite{Joni}, the authors introduced the concept of bialgebras of associative algebras, also referred to as infinitesimal bialgebras. These structures consist of an algebra and a coalgebra where the comultiplication satisfies the derivation property. Based on this, the authors studied infinitesimal Hopf algebras \cite{Aguiar1}, infinitesimal Hom-bialgebras \cite{Liul1} and infinitesimal BiHom-bialgebras \cite{Liul2}. In \cite{Aguiar3}, the author established the connection between infinitesimal bialgebras and Lie bialgebras, and introduced balanced infinitesimal bialgebras, which are both commutative and cocommutative. In the opposite algebra sense of balanced infinitesimal bialgebras, Bai studied antisymmetric infinitesimal bialgebras, which are equivalent to the double constructions of Frobenius algebras and matched pairs \cite{BaiC2}.

\subsection{The classical Yang-Baxter equation and relative Rota-Baxter operators}
The skew-symmetric solutions of the classical Yang-Baxter equation yield a class of Lie bialgebras \cite{Drinfeld1}, with applications in inverse scattering theory \cite{Faddeev}, classical integrable systems \cite{BaiC8} and quantum groups \cite{Drinfeld2,Chari}. 
The operator form of this equation corresponds to a relative Rota-Baxter operator of weight 0 (also known as an $\mathcal{O}$-operator \cite{BaiC7}). Relative Rota-Baxter operators generalize Rota-Baxter operators and have diverse applications, including Hom-Lie triple systems \cite{LiY}, Lie-Yamaguti algebras \cite{ShengY2} and relative Poisson algebras \cite{LiuG2}. This paper aims to study the $A_3$-associative analogues of these constructions, focuses on the $A_3$-associative Yang-Baxter equation and its corresponding relative Rota-Baxter operators.

Since $A_3$-associative algebra generalizes associative algebra, its bialgebra structure should generalize the bialgebra of associative algebra.
For infinitesimal bialgebras, Aguiar introduced the associative Yang-Baxter equation and proved that their solutions yield quasitriangular infinitesimal bialgebras \cite{Aguiar1}.
Aguiar also showed that a solution of the associative Yang-Baxter equation corresponds to a relative Rota-Baxter operator of weight $0$ \cite{Aguiar2}. 
Additionally, the skew-symmetric solution of the associative Yang-Baxter equation corresponds to the skew-symmetric part of the relative Rota-Baxter operator, which provides the double construction of Frobenius algebras \cite{BaiC2}.

\subsection{Outline of the paper}
In this paper, we introduce the notions of Manin triples and bialgebras for $A_3$-associative algebras. We prove the equivalence between Manin triples and bialgebras using matched pairs of $A_3$-associative algebras. Additionally, we define the $A_3$-associative Yang-Baxter equation and triangular $A_3$-associative bialgebras, and introduce Rota-Baxter operators to provide skew-symmetric solutions of the $A_3$-associative Yang-Baxter equation.
The paper is organized as follows. 

In Section 2, we present essential preliminaries. Specifically, we give the representation of $A_3$-associative algebra. However, the triple $(r^*,l^*,A)$ is also a representation, an additional condition is required. In such cases, we refer to this type of representation as admissible. Then, we define an admissible $A_3$-associative algebra. Furthermore, we introduce the concepts of matched pairs of $A_3$-associative algebras and quadratic $A_3$-associative algebra. 

In Section 3, we introduce the concept of a Manin triple of $A_3$-associative algebras, which defines a quadratic $A_3$-associative algebra $(A \oplus A',\cdot_\mathfrak{d},\mathcal{B}_\mathfrak{d})$. We also introduce the notion of an $A_3$-associative bialgebra, which is showed to be equivalent to a Manin triple of $A_3$-associative algebras via a specific matched pair of $A_3$-associative algebras.

In Section 4, we study triangular $A_3$-associative bialgebras constructed from a skew-symmetric solution of the $A_3$-associative Yang-Baxter equation. We introduce relative Rota-Baxter operators to describe the operator forms of these solutions.

Throughout this paper, unless otherwise specified, all the vector spaces and algebras are finite-dimensional over an algebraically closed field $\mathbb{K}$ of characteristic zero, $\mathbb{C}$ is the complex field.
Let $V$ be a vector space, $V^*$ be the dual space of $V$. For each positive integer $k$, we identify the tensor product $\otimes^k V$ with the space of multilinear maps from $\underbrace{V^*\times \cdots \times V^*}_{k-times}$ to $\mathbb{K}$, such that
\begin{equation}\label{eq:action}
\langle\xi_1\otimes \cdots\otimes \xi_k,v_1\otimes\cdots\otimes v_k\rangle=\langle\xi_1,v_1\rangle\cdots\langle\xi_k,v_k\rangle,\quad \forall ~\xi_1,\cdots,\xi_k\in V^*,v_1,\cdots,v_k\in V,
\end{equation}
where $\langle \xi_i,v_i\rangle=\xi_i(v_i)$, $1\leq i\leq k$. For $v_1,\cdots,v_k\in V$, define that
$$
v_1\wedge v_2 \wedge \cdots\wedge v_k=\sum_{\sigma\in S_k}sgn(\sigma)v_{\sigma(1)}\otimes v_{\sigma(2)}\cdots v_{\sigma(k)}\in \wedge^kV\subset \otimes^kV.
$$

\section{$A_3$-associative algebras and their representations}

In this section, we introduce the notions of representations and matched pairs of $A_3$-associative algebras. We also give the concept of quadratic $A_3$-associative algebra.

\subsection{Representations and matched pairs of $A_3$-associative algebras}

\begin{definition}{\rm(\cite{Ospel})}
An \textbf{$A_3$-associative algebra} $(A,\cdot)$ is a vector space $A$ with a bilinear map $\cdot: A \times A \rightarrow A$ satisfying
\begin{equation} \label{eq:asso-alg}
  (x \cdot y)\cdot z +(y \cdot z)\cdot x +(z \cdot x)\cdot y = x\cdot (y \cdot z) +y\cdot (z \cdot x) +z\cdot (x \cdot y), \quad \forall~ x,y,z\in A.
\end{equation}
\end{definition}

\begin{definition}
Let $(A_1,\cdot_1)$ and $(A_2,\cdot_2)$ be two $A_3$-associative algebras. A \textbf{homomorphism} of $A_3$-associative algebra is a linear map $\varphi:A_1 \rightarrow A_2$ such that
\begin{equation} \label{eq:iso}
  \varphi(x \cdot_1 y) =\varphi(x) \cdot_2 \varphi(y), \quad \forall~ x,y\in A_1.
\end{equation}
\end{definition}

\begin{definition} 
Let $(A,\cdot)$ be an $A_3$-associative algebra, $A_1$ be a subspace of $A$. If for all $x,y\in A_1$, $x\cdot y \in A_1$, then $(A_1,\cdot)$ is called a  \textbf{subalgebra} of $(A,\cdot)$.
\end{definition}

\begin{definition} {\rm(\cite{Gerstenhaber})}
An \textbf{associative algebra} is a pair $(A,\cdot)$, where $A$ is a vector space and $\cdot:A \times A \rightarrow A$ is a bilinear map satisfying
\begin{equation} \label{eq:associa-alg}
  (x\cdot y)\cdot z=x\cdot(y\cdot z),  \quad \forall~ x,y,z\in A. 
\end{equation}
\end{definition}

\begin{example}
An associative algebra is an $A_3$-associative algebra. By rotating \(x, y, z\) in \eqref{eq:associa-alg}, we obtain three new equations. Adding these together yields \eqref{eq:asso-alg}.
\end{example}

\begin{definition} {\rm(\cite{Goze2})}
An \textbf{admissible Poisson algebra} is a pair $(A,\cdot)$, where $A$ is a vector space and $\cdot:A \times A \rightarrow A$ is a bilinear map satisfying
\begin{equation} \label{eq:ad-PA}
  3(x\cdot y)\cdot z = 3x\cdot(y\cdot z) + (x\cdot z)\cdot y + (y\cdot z)\cdot x - (y\cdot x)\cdot z - (z\cdot x)\cdot y, \quad \forall~ x,y,z\in A.
\end{equation} 
\end{definition}

\begin{example} {\rm(\cite{Goze2})}
An admissible Poisson algebra $(A,\cdot)$ is an $A_3$-associative algebra. By rotating \(x, y, z\) in \eqref{eq:ad-PA}, we obtain three new equations. Adding these together yields \eqref{eq:asso-alg}.
\end{example}

For an algebra $(A,\cdot)$, the algebra $A^-$ is defined as a pair $(A,[\cdot,\cdot])$, where $[x,y]=x\cdot y -y\cdot x$ for all $x,y \in A$, the algebra $(A,\cdot)$ is called admissible if $A^-$ is a Lie algebra.

\begin{definition} {\rm(\cite{Albert})}
A \textbf{Lie-admissible algebra} is an algebra $(A,\cdot)$ satisfying
\begin{equation} \label{eq:Lie-ad-jaco}
    (xy-yx)z + (yz-zy)x + (zx-xz)y = x(yz-zy) + y(zx-xz) + z(xy-yx), \quad \forall~ x,y,z\in A,
\end{equation}
where $\cdot$ is omitted.
\end{definition}

In fact, if define an anticommutative bilinear map $[\cdot,\cdot]:A \times A \rightarrow A$ as 
\begin{equation} \label{eq:Lie-ad}
  [x,y]=x\cdot y-y\cdot x, \quad \forall~ x,y\in A.
\end{equation}
Then \eqref{eq:Lie-ad-jaco} is the Jacobi identity, that is,
\begin{equation*}
  [[x,y],z]+[[y,z],x]+[[z,x],y]=0, \quad \forall~ x,y,z\in A.
\end{equation*}
If we replace $x, y, z$ with $y, x, z$ in \eqref{eq:asso-alg}, subtract it from \eqref{eq:asso-alg}, we obtain \eqref{eq:Lie-ad-jaco}. Hence, we have the following conclusion.

\begin{proposition} {\rm(\cite{Goze1})}
An $A_3$-associative algebra is a Lie-admissible algebra.
\end{proposition}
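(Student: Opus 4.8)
The plan is to verify the defining identity \eqref{eq:Lie-ad-jaco} of a Lie-admissible algebra directly from the $A_3$-associative law \eqref{eq:asso-alg}, following the recipe already indicated in the paragraph preceding the statement. Write $E(x,y,z)$ for the difference of the two sides of \eqref{eq:asso-alg}, so that the $A_3$-associative law is exactly the assertion that $E(x,y,z)=0$ for all $x,y,z\in A$. Since this holds for every ordering of the arguments, I may apply it simultaneously to the triple $(x,y,z)$ and to the transposed triple $(y,x,z)$.

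First I would form the difference $E(x,y,z)-E(y,x,z)$, which vanishes because each term is separately zero. The computation is purely a matter of grouping the twelve resulting monomials. Among the left-bracketed products the terms pair up as $(x\cdot y)\cdot z-(y\cdot x)\cdot z$, $(y\cdot z)\cdot x-(z\cdot y)\cdot x$ and $(z\cdot x)\cdot y-(x\cdot z)\cdot y$, producing $(xy-yx)z+(yz-zy)x+(zx-xz)y$ in the abbreviated notation of \eqref{eq:Lie-ad-jaco}. Among the right-bracketed products, grouping by the leftmost factor gives $-x\cdot(y\cdot z)+x\cdot(z\cdot y)$, $-y\cdot(z\cdot x)+y\cdot(x\cdot z)$ and $-z\cdot(x\cdot y)+z\cdot(y\cdot x)$, that is $-x(yz-zy)-y(zx-xz)-z(xy-yx)$. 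Setting the whole expression equal to zero yields precisely \eqref{eq:Lie-ad-jaco}.

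Having established \eqref{eq:Lie-ad-jaco}, I would conclude as in the discussion surrounding \eqref{eq:Lie-ad}: the anticommutative bracket $[x,y]=x\cdot y-y\cdot x$ then satisfies the Jacobi identity, so $A^-=(A,[\cdot,\cdot])$ is a Lie algebra and $(A,\cdot)$ is Lie-admissible by definition. The only care needed is bookkeeping, namely tracking the signs when subtracting the permuted instance and correctly matching each of the six left-bracketed (respectively right-bracketed) monomials with its partner. There is no genuine obstacle; the entire content of the statement is captured by the single substitution $x\leftrightarrow y$ together with this sign accounting.
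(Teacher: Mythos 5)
Your proof is correct and follows exactly the paper's argument: the paper likewise obtains \eqref{eq:Lie-ad-jaco} by substituting $y,x,z$ for $x,y,z$ in \eqref{eq:asso-alg} and subtracting, then invokes the equivalence of \eqref{eq:Lie-ad-jaco} with the Jacobi identity for the commutator bracket \eqref{eq:Lie-ad}. Your term-by-term grouping of the twelve monomials checks out.
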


\begin{definition}
A \textbf{representation} of an $A_3$-associative algebra $(A,\cdot)$ is a triple $(l,r,V)$, where $V$ is a vector space and $l,r:A \rightarrow End(V)$ are linear maps such that 
\begin{equation} \label{eq:repre}
  l(x \cdot y)v - r(x \cdot y)v + r(x)l(y)v -l(x)l(y)v + r(y)r(x)v -l(y)r(x)v =0,\quad \forall~ x,y\in A,~v \in V.
\end{equation}
\end{definition}

\begin{definition} {\rm(\cite{BaiC2})} 
A representation of an associative algebra $(A,\cdot)$ is a triple $(l,r,V)$, where $V$ is a vector space and $l,r:A \rightarrow End(V)$ are linear maps such that 
\begin{equation} \label{eq:asso-re}
  l(x\cdot y)v=l(x)l(y)v,~r(x\cdot y)v=r(y)r(x)v,~l(x)r(y)v=r(y)l(x)v,\quad \forall~ x,y \in A,~v \in V.
\end{equation}
\end{definition}

\begin{proposition}
Let $(l,r,V)$ be a representation of associative algebra. Then $(l,r,V)$ is a representation of an $A_3$-associative algebra.
\end{proposition}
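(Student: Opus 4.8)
The plan is to verify the single defining identity \eqref{eq:repre} of an $A_3$-associative representation directly from the three defining identities \eqref{eq:asso-re} of an associative representation, with no structural reorganization needed. The key observation is that the six terms on the left-hand side of \eqref{eq:repre} fall naturally into three pairs, each of which vanishes by exactly one of the identities in \eqref{eq:asso-re}, so the proof reduces to a single bookkeeping step.

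Concretely, I would fix $x,y \in A$ and $v \in V$ and regroup the left-hand side of \eqref{eq:repre} as
$$
\bigl(l(x \cdot y)v - l(x)l(y)v\bigr) + \bigl(-r(x \cdot y)v + r(y)r(x)v\bigr) + \bigl(r(x)l(y)v - l(y)r(x)v\bigr).
$$
The first bracket vanishes by the identity $l(x \cdot y)v = l(x)l(y)v$, and the second by $r(x \cdot y)v = r(y)r(x)v$. For the third bracket I would invoke the compatibility (commutation) relation in \eqref{eq:asso-re}, which after renaming the variables gives $l(y)r(x)v = r(x)l(y)v$, so this pair cancels as well. Summing the three vanishing brackets yields \eqref{eq:repre}.

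The only point requiring a moment's attention — and the nearest thing to an obstacle in an otherwise mechanical verification — is precisely this last step: the compatibility relation is recorded in \eqref{eq:asso-re} as $l(x)r(y)v = r(y)l(x)v$, so one must substitute $x \rightsquigarrow y$ and $y \rightsquigarrow x$ to read off $l(y)r(x)v = r(x)l(y)v$ before the cancellation becomes manifest. The remaining pairings are immediate term-by-term matches, so no further computation is needed.
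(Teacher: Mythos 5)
Your proposal is correct and matches the paper's own proof: both verify \eqref{eq:repre} by substituting the three identities of \eqref{eq:asso-re} (including the commutation relation with $x$ and $y$ swapped) so that the six terms cancel in pairs. The only cosmetic difference is that you group the terms into three vanishing brackets, while the paper rewrites the whole expression in one line and cancels; the content is identical.
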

\begin{proof}
Let $(A,\cdot)$ be an associative algebra.
For all $x,y \in A,~v \in V$, we have
\begin{eqnarray*}
  && l(x \cdot y)v - r(x \cdot y)v + r(x)l(y)v -l(x)l(y)v + r(y)r(x)v -l(y)r(x)v \\
  &\overset{\eqref{eq:asso-re}}{=}& l(x)l(y)v -r(y)r(x)v + r(x)l(y)v -l(x)l(y)v + r(y)r(x)v -r(x)l(y)v \\
  &=& 0, 
\end{eqnarray*}
where the above notation $\overset{\eqref{eq:asso-re}}{=}$ indicates
that the equation \eqref{eq:asso-re} is used to establish the equality. Hence, the conclusion holds.
\end{proof}

\begin{definition}
An \textbf{adjoint representation} of an $A_3$-associative algebra $(A,\cdot)$ is a triple $(L,R,A)$, where $L,R:A \rightarrow End(A)$ are two linear maps defined by
\begin{equation*}
  L(x)y =x \cdot y = R(y)x, \quad \forall~ x,y\in A.
\end{equation*}
\end{definition}

\begin{remark} {\rm (\cite{BaiC2})}
Let $(A,\cdot)$ be an associative algebra, $L:A \rightarrow End(A)$ with $x \mapsto L(x)$ and $R:A \rightarrow End(A)$ with $x \mapsto R(x)$ be two linear maps satisfying $L(x)(y)=xy$, $R(x)(y)=yx$, for any $x,y \in A$. Then $(L,R,A)$ is the adjoint representation of associative algebra $(A,\cdot)$, 
which is also the adjoint representation of $A_3$-associative algebra. 
\end{remark}

The dual action of associative algebras was given in \cite{BaiC2}. Let $(l,r,V)$ be a representation of an associative algebra $(A,\cdot)$, $l^*,r^*:A \rightarrow End(V^*)$ be two linear maps. The dual actions of left and right multiplication are defined as follows:
\begin{equation} \label{eq:dual}
   \langle l^*(x)u^*,v \rangle = \langle u^*,l(x)v \rangle, \langle r^*(x)u^*,v \rangle = \langle u^*,r(x)v \rangle, \quad \forall~ x\in A,~u^* \in V^*,~v \in V.
\end{equation}

\begin{remark}
In \cite{Ni} the dual action of the representation $(l,r,V)$ of Poisson algebras is defined by
\begin{equation}\label{eq:n-action}
  \langle l^*(x)u^*,v \rangle= - \langle u^*,l(x)v \rangle, ~ \langle r^*(x)u^*,v \rangle= - \langle u^*,r(x)v \rangle, \quad \forall~ x\in A,~u^* \in V^*,~v \in V,
\end{equation}
where $l^*,r^*:A \rightarrow End(V^*)$ are the linear maps. If the left multiplication $``l"$ and right multiplication $``r"$ of the above algebras satisfy
\begin{equation*}
  [r(y),l(x)] - [r(x),l(y)]= 2r(y)r(x) - 2l(x)l(y),\quad \forall~ x,y\in A,
\end{equation*}
then $(r^*,l^*,V^*)$ is a representation of $A_3$-associative algebra $(A,\cdot)$. This dual action \eqref{eq:n-action} also appears in the dual representations of Lie algebras \cite{Drinfeld1}, left-symmetric algebras \cite{BaiC6}, Novikov-Poisson algebras \cite{LiB}, anti-pre Lie algebras \cite{LiuG1}, Leibniz algebras \cite{Tang}, and left-Alia algebras \cite{KangC}. The dual action of an associative algebra is defined by \eqref{eq:dual}. In this paper, we continue to use that definition without adding a negative sign on the right-hand side.
\end{remark}

 To generalize the dual representation of associative algebras, we adopt \eqref{eq:dual} as the definition of the dual action of $A_3$-associative algebra. 
In \cite[Lemma 2.1.2]{BaiC2}, $(r^*,l^*,V^*)$ is a representation of associative algebra. Next, we consider when $(r^*,l^*,V^*)$ is a representation of $A_3$-associative algebra.

\begin{proposition} \label{co-rep}
Let $(A,\cdot)$ be an $A_3$-associative algebra and $(l,r,V)$ be a representation. Then $(r^*,l^*,V^*)$ is a representation if and only if
\begin{equation} \label{eq:ad-repre}
  r(y)l(x)v - l(x)r(y)v + r(x)l(y)v - l(y)r(x)v =0, \quad \forall~ x,y \in A,~v \in V.
\end{equation}
\end{proposition}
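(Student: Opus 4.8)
The plan is to test the representation axiom \eqref{eq:repre} directly on the triple $(r^*,l^*,V^*)$ and transport everything back to $V$ by duality. Concretely, $(r^*,l^*,V^*)$ is a representation precisely when \eqref{eq:repre} holds with the left action taken to be $r^*$ and the right action taken to be $l^*$; that is, when
\begin{equation*}
r^*(x\cdot y)u^* - l^*(x\cdot y)u^* + l^*(x)r^*(y)u^* - r^*(x)r^*(y)u^* + l^*(y)l^*(x)u^* - r^*(y)l^*(x)u^* = 0
\end{equation*}
for all $x,y\in A$ and $u^*\in V^*$. First I would pair this identity with an arbitrary $v\in V$ and apply the defining relations \eqref{eq:dual} of the dual actions to move each operator across the pairing; for instance $\langle l^*(x)r^*(y)u^*,v\rangle = \langle u^*, r(y)l(x)v\rangle$ and $\langle r^*(y)l^*(x)u^*,v\rangle = \langle u^*, l(x)r(y)v\rangle$, and likewise for the remaining four terms.

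Since $u^*\in V^*$ and $v\in V$ are arbitrary and the pairing is nondegenerate, the displayed condition is equivalent to the identity
\begin{equation*}
r(x\cdot y)v - l(x\cdot y)v + r(y)l(x)v - r(y)r(x)v + l(x)l(y)v - l(x)r(y)v = 0
\end{equation*}
holding in $V$ for all $x,y\in A$ and $v\in V$. The next step is to eliminate the terms involving $x\cdot y$ using the hypothesis that $(l,r,V)$ is already a representation: rearranging \eqref{eq:repre} gives
\begin{equation*}
r(x\cdot y)v - l(x\cdot y)v = r(x)l(y)v - l(x)l(y)v + r(y)r(x)v - l(y)r(x)v.
\end{equation*}
Substituting this into the previous display, the terms $l(x)l(y)v$ and $r(y)r(x)v$ cancel in pairs, and what remains is exactly
\begin{equation*}
r(x)l(y)v - l(y)r(x)v + r(y)l(x)v - l(x)r(y)v = 0,
\end{equation*}
which is \eqref{eq:ad-repre} after reordering the summands.

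Because every step in this chain is an equivalence — the duality transfer is reversible by nondegeneracy of the pairing, and the substitution only invokes the representation identity for $(l,r,V)$, which holds by assumption throughout — the argument establishes both implications at once: $(r^*,l^*,V^*)$ is a representation if and only if \eqref{eq:ad-repre} holds. I do not expect a genuine conceptual obstacle here; the only real care needed is bookkeeping. Swapping $l\leftrightarrow r$ when feeding $(r^*,l^*,V^*)$ into \eqref{eq:repre}, together with the order reversal inherent in dualizing a composite operator (so that $l^*(x)r^*(y)$ produces $r(y)l(x)$ rather than $l(x)r(y)$), is precisely where a sign or index error would creep in, so I would track each of the six terms explicitly before collecting them.
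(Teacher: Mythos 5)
Your proposal is correct and follows essentially the same route as the paper: both test the axiom \eqref{eq:repre} on $(r^*,l^*,V^*)$, transport each of the six terms back to $V$ via \eqref{eq:dual} (with the composite operators reversing order), and then use the hypothesis that $(l,r,V)$ is a representation to identify the resulting condition with \eqref{eq:ad-repre}. The only cosmetic difference is that you substitute \eqref{eq:repre} to cancel the $x\cdot y$ terms and read off \eqref{eq:ad-repre} as the remainder, whereas the paper substitutes \eqref{eq:ad-repre} and then invokes \eqref{eq:repre} to conclude; your ordering makes the ``only if'' direction slightly more transparent, but the content is identical.
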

\begin{proof}
For all $x,y \in A,~u^* \in V^*,~v \in V$, we have
\begin{eqnarray*}
  && \langle \big(r^*(x \cdot y) - l^*(x \cdot y) + l^*(x)r^*(y) - r^*(x)r^*(y) + l^*(y)l^*(x) - r^*(y)l^*(x) \big)u^*,v \rangle \\
  &\overset{\eqref{eq:dual}}{=}& \langle u^*, \big(r(x \cdot y) - l(x \cdot y) + r(y)l(x) - r(y)r(x) + l(x)l(y) - l(x)r(y)\big)v \rangle \\
  &\overset{\eqref{eq:ad-repre}}{=}& \langle u^*, \big(r(x \cdot y) - l(x \cdot y) + l(y)r(x) - r(y)r(x) + l(x)l(y) - r(x)l(y)\big)v \rangle  \\
  &\overset{\eqref{eq:repre}}{=}& 0.
\end{eqnarray*}
Therefore, $(r^*,l^*,V^*)$ is a representation of an $A_3$-associative algebra $(A,\cdot)$ if and only if \eqref{eq:ad-repre} holds.
\end{proof}

\begin{remark}
In \cite[Proposition 3.3]{BaiC6}, $(r^*-l^*,l^*,V^*)$ is a representation of left-symmetric algebras. 
If $(A,\cdot)$ is an $A_3$-associative algebra with a representation $(l,r,V)$, then $(r^*-l^*,l^*,V^*)$ is a representation also requiring extra condition.
In fact, for all $x,y \in A,~u^* \in V^*,~v \in V$, 
\begin{eqnarray*}
   \langle \big((r^*-l^*)(x \cdot y) - l^*(x \cdot y) + l^*(x)(r^*-l^*)(y) - (r^*-l^*)(x)(r^*-l^*)(y)\\ + l^*(y)l^*(x) 
   - (r^*-l^*)(y)l^*(x) \big)u^*,v \rangle\neq0.
\end{eqnarray*}

\end{remark}

\begin{definition} \label{admissible-re}
Let $(A,\cdot)$ be an $A_3$-associative algebra. The representation $(l,r,V)$ of $(A,\cdot)$ is called \textbf{admissible} if $(l,r,V)$ satisfies  \eqref{eq:ad-repre}.
\end{definition}

\begin{proposition} \label{dual-ad-repre}
Let $(l,r,V)$ be an admissible representation of $A_3$-associative algebra $(A,\cdot)$. If $l^*,r^*:A \rightarrow End(V^*)$ are two linear maps defined by \eqref{eq:dual}, then $(r^*,l^*,V^*)$ is an admissible representation.
\end{proposition}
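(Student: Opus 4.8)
The plan is to break the statement into two claims: first that $(r^*,l^*,V^*)$ is a representation of $(A,\cdot)$, and second that this representation is admissible in the sense of Definition~\ref{admissible-re}. The first claim costs nothing. By hypothesis $(l,r,V)$ is admissible, which by definition means it satisfies the identity \eqref{eq:ad-repre}; but this is exactly the condition that Proposition~\ref{co-rep} requires in order for $(r^*,l^*,V^*)$ to be a representation. So I would simply cite the ``if'' direction of Proposition~\ref{co-rep} to obtain that $(r^*,l^*,V^*)$ is a representation, with $r^*$ playing the role of the left map and $l^*$ the right map.

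For the second claim I must verify that the triple $(r^*,l^*,V^*)$ satisfies the admissibility identity \eqref{eq:ad-repre} in its own right. Substituting $r^*$ for the left map and $l^*$ for the right map into the template \eqref{eq:ad-repre}, this amounts to checking, for all $x,y\in A$ and $u^*\in V^*$, that
\[
l^*(y)r^*(x)u^* - r^*(x)l^*(y)u^* + l^*(x)r^*(y)u^* - r^*(y)l^*(x)u^* = 0.
\]
The natural approach is to pair this expression against an arbitrary $v\in V$ and transport everything across the duality \eqref{eq:dual}. Each of the four terms unwinds to a pairing of the form $\langle u^*,(\cdots)v\rangle$; for instance $\langle l^*(y)r^*(x)u^*,v\rangle=\langle u^*,r(x)l(y)v\rangle$, and the remaining three are handled the same way. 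The one point that requires attention is that pushing a composite operator through \eqref{eq:dual} reverses the order of composition, so I would track the resulting four operators on $V$ carefully.

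Collecting the dualized terms, the operator acting on $v$ becomes $r(x)l(y) - l(y)r(x) + r(y)l(x) - l(x)r(y)$, which is precisely the left-hand side of \eqref{eq:ad-repre} for $(l,r,V)$ — the same four terms, merely listed in a different order. Since $(l,r,V)$ is admissible, this vanishes, so the original expression pairs to zero against every $v\in V$, and non-degeneracy of $\langle\,\cdot\,,\,\cdot\,\rangle$ yields the claimed identity. I do not expect a genuine obstacle: the substance is simply the observation that the condition \eqref{eq:ad-repre} is invariant under the simultaneous exchange $l\leftrightarrow r$ together with passage to the dual, i.e.\ it is ``self-dual,'' so it transports automatically to $(r^*,l^*,V^*)$. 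The only care needed is the bookkeeping of operator orders under \eqref{eq:dual}.
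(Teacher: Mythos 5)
Your proposal is correct and follows essentially the same route as the paper: the paper likewise dualizes the admissibility identity \eqref{eq:ad-repre} through \eqref{eq:dual} to obtain the condition \eqref{eq:ad-ad-repre} for $(r^*,l^*,V^*)$, and then invokes Proposition \ref{co-rep} together with Definition \ref{admissible-re}. Your observation that \eqref{eq:ad-repre} is self-dual under the exchange $l\leftrightarrow r$ combined with dualization is exactly the content of the paper's one-line computation.
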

\begin{proof}
By \eqref{eq:dual} and \eqref{eq:ad-repre}, we have
\begin{equation} \label{eq:ad-ad-repre}
  l^*(x)r^*(y)u^* -r^*(y)l^*(x)u^* +l^*(y)r^*(x)u^* -r^*(x)l^*(y)u^* =0, \quad \forall~ x,y \in A,~ u^* \in V^*.
\end{equation}
Then by Proposition \ref{co-rep} and Definition \ref{admissible-re}, we have $(r^*,l^*,V^*)$ is an admissible representation.
\end{proof}

\begin{definition}
The $A_3$-associative algebra $(A,\cdot)$ is called \textbf{admissible} if the multiplication $\cdot:A \times A \rightarrow A$ satisfies
\begin{equation} \label{eq:admissible}
  (x\cdot z)\cdot y - x\cdot(z\cdot y) = y\cdot(z\cdot x) - (y\cdot z)\cdot x, \quad \forall~ x,y,z\in A.
\end{equation}
\end{definition}

\begin{remark}
Note that \eqref{eq:admissible} is an anti-flexible identity in \cite{Mafoya}.
\end{remark}

\begin{definition}
Let $(A,\cdot)$ be an admissible $A_3$-associative algebra with an adjoint representation $(L,R,A)$. 
Then $(R^*,L^*,A^*)$ is a representation  of $(A,\cdot)$, which is called \textbf{coadjoint representation}. 
\end{definition}

\begin{example} \label{ad-asso-example}
Let $A$ be a $2$-dimensional vector space over the complex field $\mathbb{C}$ with a basis $\{e_1,e_2\}$. The multiplications on $A$ are given by
\begin{equation*}
  e_1 \cdot e_1 = e_1 \cdot e_2 = e_2 \cdot e_1 = e_1 + 2e_2,~e_2 \cdot e_2 =e_2.
\end{equation*}
Then $(A,\cdot)$ is a $2$-dimensional admissible $A_3$-associative algebra. But $(A,\cdot)$ is not an associative algebra, because
\begin{eqnarray*}
  && (e_1 \cdot e_2) \cdot e_2 - e_1 \cdot (e_2 \cdot e_2) 
  = (e_1 + 2e_2) \cdot e_2 - e_1 \cdot e_2 \\
  &=& (e_1 + 2e_2) + 2e_2 - (e_1 + 2e_2) 
  \neq 0.
\end{eqnarray*} 
\end{example}

\begin{definition} {\rm(\cite{Vinberg})}
A \textbf{left-symmetric algebra} is a pair $(A,\cdot)$, where $A$ is a vector space and $\cdot:A\times A\rightarrow A$ is a bilinear map on $A$ satisfying
\begin{equation} \label{eq:pre-Lie}
  (x\cdot y)\cdot z - x\cdot(y\cdot z)=(y\cdot x)\cdot z - y\cdot(x\cdot z), \quad \forall~ x,y,z \in A.
\end{equation}
\end{definition}

\begin{proposition}
If the algebra $(A,\cdot)$ is both admissible $A_3$-associative algebra and left-symmetric algebra, then $(A,\cdot)$ is an associative algebra.
\end{proposition}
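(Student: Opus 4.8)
The plan is to recast the three structural laws in terms of the associator
\[
(x,y,z):=(x\cdot y)\cdot z-x\cdot(y\cdot z),\qquad x,y,z\in A,
\]
and to show that the hypotheses force $(x,y,z)=-(x,y,z)$; since $\mathbb{K}$ has characteristic zero this gives $(x,y,z)=0$, which is exactly associativity \eqref{eq:associa-alg}.

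First I would translate each relevant hypothesis into a symmetry of the associator. The left-symmetric law \eqref{eq:pre-Lie} is precisely $(x,y,z)=(y,x,z)$, so the associator is invariant under transposing its first two arguments. Collecting all terms of the admissibility condition \eqref{eq:admissible} on one side gives $(x,z,y)+(y,z,x)=0$, i.e. $(a,b,c)=-(c,b,a)$ after renaming, so the associator changes sign under transposing its first and third arguments. (The $A_3$-associative law \eqref{eq:asso-alg} becomes the vanishing cyclic sum $(x,y,z)+(y,z,x)+(z,x,y)=0$; I expect this will not be needed, so the conclusion in fact follows from left-symmetry together with admissibility alone.)

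Next I would chain these two elementary symmetries, alternating the sign-free first-two swap with the sign-reversing first-third swap along a path in $S_3$ that returns to the starting arrangement after an odd number of sign flips:
\[
(x,y,z)=(y,x,z)=-(z,x,y)=-(x,z,y)=(y,z,x)=(z,y,x)=-(x,y,z).
\]
Here the first, third and fifth equalities use \eqref{eq:pre-Lie} and the second, fourth and sixth use \eqref{eq:admissible}. The six steps return $(x,y,z)$ to itself with three sign changes, yielding $2(x,y,z)=0$ and hence $(x,y,z)=0$ for all $x,y,z\in A$.

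There is no analytic difficulty here; the only real care is combinatorial bookkeeping. Since the transpositions $(12)$ and $(13)$ generate $S_3$, many such closed chains exist, and the one place a slip could occur is tracking the sign while making sure each link is a single legitimate application of \eqref{eq:pre-Lie} or \eqref{eq:admissible}. I would therefore verify the six equalities above one at a time before concluding that $(A,\cdot)$ satisfies \eqref{eq:associa-alg}.
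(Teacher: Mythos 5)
Your proof is correct, but it takes a genuinely different route from the paper's. The paper substitutes $y,x,z$ into the $A_3$-associative law \eqref{eq:asso-alg} to get, in your associator notation, $(y,x,z)+(x,z,y)+(z,y,x)=0$, then uses left-symmetry \eqref{eq:pre-Lie} to rewrite $(y,x,z)=(x,y,z)$ and admissibility \eqref{eq:admissible} to rewrite $(z,y,x)=-(x,y,z)$; these two terms cancel and the identity collapses immediately to $(x,z,y)=0$. That argument uses all three hypotheses but never divides by $2$. Your chain uses only the two symmetries $(x,y,z)=(y,x,z)$ and $(a,b,c)=-(c,b,a)$ — equivalently, the observation that no character of $S_3$ sends $(12)\mapsto 1$ and $(13)\mapsto -1$ — so it proves the slightly stronger statement that any left-symmetric algebra satisfying the anti-flexible identity \eqref{eq:admissible} is associative, with the $A_3$-law \eqref{eq:asso-alg} playing no role (it is of course recovered as a consequence). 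The price is the final step $2(x,y,z)=0\Rightarrow(x,y,z)=0$, which needs $\operatorname{char}\mathbb{K}\neq 2$; this is harmless under the paper's standing characteristic-zero assumption, whereas the paper's cancellation argument would survive even in characteristic $2$. All six links in your chain check out, so the argument is sound as written.
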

\begin{proof}
Replace $x,y,z$ in \eqref{eq:asso-alg} with $y,x,z$, then subtract \eqref{eq:pre-Lie} and add \eqref{eq:admissible}, we have
\begin{equation*}
  (x\cdot z) \cdot y = x\cdot (z\cdot y), \quad \forall~ x,y,z \in A.
\end{equation*}
Therefore, the conclusion holds.
\end{proof}

\begin{definition} {\rm(\cite{Vinberg,Koszul})}
A \textbf{right-symmetric algebra} is a pair $(A,\cdot)$, where $A$ is a vector space and $\cdot:A\times A\rightarrow A$ is a bilinear map on $A$ satisfying
\begin{equation} \label{eq:right-s}
  (x\cdot y)\cdot z - x\cdot(y\cdot z)=(x\cdot z)\cdot y - x\cdot(z\cdot y), \quad \forall~ x,y,z \in A.
\end{equation}
\end{definition}

\begin{proposition}
If the algebra $(A,\cdot)$ is both admissible $A_3$-associative algebra and right-symmetric algebra, then $(A,\cdot)$ is an associative algebra.
\end{proposition}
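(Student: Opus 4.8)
The plan is to mirror the proof of the preceding proposition: rewrite all three hypotheses in terms of the associator $(a,b,c) := (a\cdot b)\cdot c - a\cdot(b\cdot c)$ and then take a forced linear combination so that the $A_3$-associative law collapses to plain associativity. In this language, \eqref{eq:asso-alg} reads $(x,y,z)+(y,z,x)+(z,x,y)=0$; the right-symmetric law \eqref{eq:right-s} reads $(x,y,z)=(x,z,y)$, i.e.\ symmetry under interchanging the \emph{last two} slots; and the admissibility condition \eqref{eq:admissible} reads $(x,z,y)=-(y,z,x)$, i.e.\ antisymmetry under interchanging the \emph{first and last} slots. The target $(x\cdot y)\cdot z = x\cdot(y\cdot z)$ of \eqref{eq:associa-alg} is exactly the statement $(x,y,z)=0$ for all $x,y,z$.

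First I would process the middle summand $(y,z,x)$ of \eqref{eq:asso-alg}. Applying \eqref{eq:right-s} to swap its last two slots gives $(y,z,x)=(y,x,z)$, and then applying \eqref{eq:admissible} to swap the first and last slots gives $(y,x,z)=-(z,x,y)$; hence $(y,z,x)=-(z,x,y)$. Substituting this into \eqref{eq:asso-alg} makes the second and third summands cancel, leaving $(x,y,z)=0$. Since $x,y,z$ are arbitrary, this is precisely associativity \eqref{eq:associa-alg}, which is the desired conclusion. Translated back into the $\cdot$-notation, the single computation is: from \eqref{eq:right-s} and \eqref{eq:admissible} one gets $(y\cdot z)\cdot x - y\cdot(z\cdot x) = -\big((z\cdot x)\cdot y - z\cdot(x\cdot y)\big)$, and feeding this into \eqref{eq:asso-alg} yields $(x\cdot y)\cdot z = x\cdot(y\cdot z)$.

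There is no genuine obstacle here; the argument is a forced cancellation, and the only care required is bookkeeping, namely correctly identifying which transposition of the three arguments each of \eqref{eq:right-s} and \eqref{eq:admissible} encodes and tracking the single minus sign introduced by admissibility. Carrying out the step in associator notation, as above, keeps these signs transparent before reverting to the $\cdot$-notation to match the paper's style. It is also worth noting that, exactly as in the left-symmetric case, one does not need the full force of \eqref{eq:asso-alg}: iterating \eqref{eq:right-s} followed by \eqref{eq:admissible} already yields the cyclic rule $(a,b,c)=-(b,c,a)$, whence $2(x,y,z)=0$ and associativity follows in characteristic zero; invoking \eqref{eq:asso-alg} merely gives the shortest cancellation and keeps the proof parallel to the previous proposition.
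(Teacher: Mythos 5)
Your proof is correct and follows essentially the same route as the paper's: a forced cancellation in the $A_3$-associative law obtained by applying \eqref{eq:right-s} (swap of the last two slots of the associator) followed by \eqref{eq:admissible} (sign-reversing swap of the outer slots) to kill two of the three cyclic summands, leaving $(x,y,z)=0$. Your closing observation is also valid and slightly stronger than what the paper records: the composite rule $(a,b,c)=-(b,c,a)$ already forces $2(x,y,z)=0$ without invoking \eqref{eq:asso-alg} at all, so in characteristic zero the anti-flexible identity together with right-symmetry alone implies associativity.
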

\begin{proof}
Replace $x,y,z$ in \eqref{eq:asso-alg} with $y,x,z$, then subtract \eqref{eq:right-s} and add \eqref{eq:admissible}, we have
\begin{equation*}
    (y\cdot x) \cdot z = y\cdot (x\cdot z), \quad \forall~ x,y,z \in A.
\end{equation*}
Therefore, the conclusion holds.
\end{proof}

\begin{definition} \label{re-equivalent}
Let $(l,r,V)$ and $(l',r',V')$ be representations of an $A_3$-associative algebra $(A,\cdot)$. These two representations $(l,r,V)$ and $(l',r',V')$ are \textbf{equivalent} if there is a linear isomorphism $\phi:V\rightarrow V'$ such that
\begin{equation} \label{eq:re-iso}
  \phi \big(l(x)v \big) = l'(x)\phi(v),~\phi \big(r(x)v \big)=r'(x)\phi(v), \quad \forall~ x\in A,~v \in V.
\end{equation}
\end{definition}

\begin{definition}
Let $(A,\cdot)$ and $(B,\circ)$ be $A_3$-associative algebras and $l_A,r_A: A\rightarrow End(B)$, $l_B,r_B: B\rightarrow End(A)$ be linear maps.  Then $\big((A,\cdot),(B,\circ),l_A,r_A,l_B,r_B \big)$ is a \textbf{matched pair of $A_3$-associative algebras} if $(A \oplus B,\ast)$ is an $A_3$-associative algebra, where for all $x,y\in A,~a,b \in B$, $\ast:(A \oplus B)\otimes (A \oplus B) \rightarrow A \oplus B$ is a bilinear map satisfying
\begin{equation}\label{eq:mp-ds}
  (x+a)\ast (y+b) = x \cdot y +l_B(a)y +r_B(b)x + a \circ b +l_A(x)b +r_A(y)a.
\end{equation}
\end{definition}

\begin{proposition} \label{matched pair}
Let $(A,\cdot)$ and $(B,\circ)$ be $A_3$-associative algebras and $l_A,r_A: A\rightarrow End(B)$, $l_B,r_B: B\rightarrow End(A)$ be linear maps. Then $\big((A,\cdot),(B,\circ),l_A,r_A,l_B,r_B \big)$ is a matched pair of $A_3$-associative algebras if and only if $(l_A,r_A,B)$ is a representation of $(A,\cdot)$, $(l_B,r_B,A)$ is a representation of $(B,\circ)$ and for all $x,y,z \in A,~a,b,c \in B$, 
\begin{eqnarray} 
  && (r_B-l_B)(a)(x \cdot y) = x \cdot \big(r_B(a)y \big) - \big(r_B(a)y \big) \cdot x + y \cdot \big(l_B(a)x \big)  \notag \\
  && \quad - \big(l_B(a)x \big) \cdot y + (r_B-l_B)\big(l_A(y)a \big)x + (r_B-l_B)\big(r_A(x)a \big)y, \label{eq:mp1} \\
  && (r_A-l_A)(x)(a \circ b) = a \circ (r_A(x)b) - (r_A(x)b) \circ a + b \circ (l_A(x)a) \notag \\
  && \quad - (l_A(x)a) \circ b + (r_A-l_A)(l_B(b)x)a + (r_A-l_A)(r_B(a)x)b. \label{eq:mp2}
\end{eqnarray}
\end{proposition}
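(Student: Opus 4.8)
The plan is to unwind the definition of a matched pair: by definition the datum $\big((A,\cdot),(B,\circ),l_A,r_A,l_B,r_B\big)$ is a matched pair precisely when $(A\oplus B,\ast)$, with $\ast$ given by \eqref{eq:mp-ds}, satisfies the $A_3$-associative law \eqref{eq:asso-alg}. Since $\ast$ is bilinear and the $A_3$-associative law is trilinear in its three arguments, it suffices to test the law on \emph{homogeneous} triples $(u,v,w)$ in which each entry lies entirely in $A$ or entirely in $B$; this gives $2^3=8$ cases. Moreover, both sides of \eqref{eq:asso-alg} are cyclic sums, so the law evaluated on $(u,v,w)$ is literally the same identity as on $(v,w,u)$. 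Using this cyclic invariance, the eight cases collapse to four inequivalent patterns: all three entries in $A$, all three in $B$, the pattern with two entries in $A$ and one in $B$ (represented by $(x,y,a)$ with $x,y\in A$, $a\in B$), and its mirror with two entries in $B$ and one in $A$ (represented by $(a,b,x)$).

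The two pure patterns require nothing new: on $A^{\times 3}$ the operation $\ast$ restricts to $\cdot$, so the law is exactly \eqref{eq:asso-alg} for $(A,\cdot)$, and on $B^{\times 3}$ it is the $A_3$-associative law for $(B,\circ)$; both hold by hypothesis. For the mixed pattern $(x,y,a)$ I would first record the elementary products read off from \eqref{eq:mp-ds}, namely $x\ast y=x\cdot y$, $x\ast a=r_B(a)x+l_A(x)a$, $a\ast x=l_B(a)x+r_A(x)a$ (and the analogous $y\ast a$, $a\ast y$), keeping careful track of the noncommutativity of $\ast$ so that the order of the arguments is never interchanged. Substituting these into the six terms of \eqref{eq:asso-alg} and expanding once more, every term splits canonically into its $A$-component and its $B$-component, which can be matched separately because $A\oplus B$ is a direct sum.

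The payoff is that the $B$-component of the $(x,y,a)$ identity, after cancellation, is exactly \eqref{eq:repre} applied to $(l_A,r_A,B)$ and the element $a$, i.e. it says precisely that $(l_A,r_A,B)$ is a representation of $(A,\cdot)$; while the $A$-component, after collecting the terms into the combinations $(r_B-l_B)(\,\cdot\,)$, is exactly the compatibility identity \eqref{eq:mp1}. Running the mirror-symmetric computation on the pattern $(a,b,x)$ (interchanging the roles of $A$ and $B$, of $\cdot$ and $\circ$, and of the pairs $(l_A,r_A)$ and $(l_B,r_B)$) yields, from its $A$-component, that $(l_B,r_B,A)$ is a representation of $(B,\circ)$, and, from its $B$-component, the identity \eqref{eq:mp2}. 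Since each of the four surviving cases is thereby shown to be equivalent to one of the four stated conditions, the two directions of the proposition follow at once: a matched pair forces all eight cases and hence the four conditions, and conversely the four conditions make all eight cases hold, so $(A\oplus B,\ast)$ is $A_3$-associative. The only real work, and the place where an error is easiest to make, is the bookkeeping of the $A$-component in the mixed cases: one must correctly regroup the $r_B$- and $l_B$-terms into the differences $(r_B-l_B)$ and respect the argument order throughout, since unlike the associative case none of these products is symmetric.
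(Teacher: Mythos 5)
Your proposal is correct and follows essentially the same route as the paper: both unwind the definition by expanding the product \eqref{eq:mp-ds} inside the cyclic $A_3$-identity on $A\oplus B$ and then match the resulting $A$- and $B$-components against the two representation conditions and \eqref{eq:mp1}--\eqref{eq:mp2}. Your reduction to four homogeneous patterns via trilinearity and cyclic invariance is just a cleaner bookkeeping of the same expansion, and your identification of which component of which mixed case yields which condition is accurate.
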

\begin{proof}
For all $x,y,z \in A,~a,b,c \in B$, we have 
\begin{eqnarray*}
  && \mathop{\circlearrowleft}\limits_{x,y,z}\mathop{\circlearrowleft}\limits_{a,b,c} \big((x+a) \ast (y+b) \big) \ast (z+c) \\
  & \overset{\eqref{eq:mp-ds}}{=}& \mathop{\circlearrowleft}\limits_{x,y,z}\mathop{\circlearrowleft}\limits_{a,b,c} \big(x \cdot y +l_B(a)y +r_B(b)x + a \circ b +l_A(x)b + r_A(y)a \big) \ast (z+c)  \\
  & \overset{\eqref{eq:mp-ds}}{=}& \mathop{\circlearrowleft}\limits_{x,y,z}\mathop{\circlearrowleft}\limits_{a,b,c} \big(x \cdot y +l_B(a)y +r_B(b)x \big) \cdot z + l_B \big(a \circ b +l_A(x)b +r_A(y)a \big)z \\
  &&+ r_B(c)\big(x \cdot y +l_B(a)y +r_B(b)x \big) + \big(a \circ b +l_A(x)b +r_A(y)a \big) \circ c \\
  &&+ l_A \big(x \cdot y +l_B(a)y +r_B(b)x \big)c + r_A(z)(a \circ b +l_A(x)b +r_A(y)a), \\
  && \mathop{\circlearrowleft}\limits_{x,y,z}\mathop{\circlearrowleft}\limits_{a,b,c}
  (x+a) \ast \big((y+b) \ast (z+c) \big) \\
  & \overset{\eqref{eq:mp-ds}}{=}& \mathop{\circlearrowleft}\limits_{x,y,z}\mathop{\circlearrowleft}\limits_{a,b,c} (x+a) \ast \big(y \cdot z +l_B(b)z +r_B(c)y + b \circ c +l_A(y)c + r_A(z)b \big) \\
  & \overset{\eqref{eq:mp-ds}}{=}& \mathop{\circlearrowleft}\limits_{x,y,z}\mathop{\circlearrowleft}\limits_{a,b,c} x \cdot \big(y \cdot z +l_B(b)z +r_B(c)y \big) +l_B(a) \big(y \cdot z +l_B(b)z +r_B(c)y \big) \\
  &&+ r_B \big(b \circ c +l_A(y)c +r_A(z)b \big)x + a \circ \big(b \circ c +l_A(y)c +r_A(z)b \big) \\
  &&+l_A(x) \big(b \circ c +l_A(y)c +r_A(z)b \big) + r_A \big(y \cdot z +l_B(b)z +r_B(c)y \big)a,
\end{eqnarray*}
where $\mathop{\circlearrowleft}\limits_{x,y,z}$ denotes the cyclic sum of $x,y,z$. Hence, 
$$\mathop{\circlearrowleft}\limits_{x,y,z}\mathop{\circlearrowleft}\limits_{a,b,c} \big((x+a) \ast (y+b)\big) \ast(z+c) = \mathop{\circlearrowleft}\limits_{x,y,z}\mathop{\circlearrowleft}\limits_{a,b,c} (x+a) \ast \big((y+b) \ast (z+c)\big)$$
holds if and only if $(l_A,r_A,B)$ is a representation of $(A,\cdot)$, $(l_B,r_B,A)$ is a representation of $(B,\circ)$, \eqref{eq:mp1} and \eqref{eq:mp2} hold.
Therefore, the conclusion holds.
\end{proof}

\begin{proposition} \label{semi-dp}
Let $(A,\cdot)$ be an $A_3$-associative algebra, $V$ be a vector space and $l,r: A\rightarrow End(V)$ be linear maps. Then $(l,r,V)$ is a representation of $(A,\cdot)$ if and only if there is an $A_3$-associative algebra on the direct sum $A\oplus V$ of vector spaces given by
\begin{equation} \label{eq:semi-dp}
  (x+u)\ast (y+v) = x \cdot y + l(x)v + r(y)u,\quad \forall~ x,y\in A,~u,v \in V.
\end{equation}
The $A_3$-associative algebra $(A\oplus V,\ast)$ is denoted by $A \ltimes_{l,r} V$, which is called the \textbf{semi-direct product $A_3$-associative algebra}.
\end{proposition}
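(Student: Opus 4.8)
The plan is to verify directly that the $A_3$-associative law \eqref{eq:asso-alg} for the product $\ast$ on $A\oplus V$ is equivalent to the representation condition \eqref{eq:repre}. Writing $X=x+u$, $Y=y+v$, $Z=z+w$ with $x,y,z\in A$ and $u,v,w\in V$, I would expand
$$\mathop{\circlearrowleft}\limits_{x,y,z}\big((X\ast Y)\ast Z\big)\quad\text{and}\quad \mathop{\circlearrowleft}\limits_{x,y,z}\big(X\ast(Y\ast Z)\big)$$
by applying \eqref{eq:semi-dp} twice, where the cyclic sum is taken simultaneously over the pairs $(x,u),(y,v),(z,w)$. Each resulting expression splits into a component lying in $A$ and a component lying in $V$, and the law holds precisely when both components agree.

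For the $A$-component, the product $\ast$ contributes only the term $x\cdot y$ in $A$, so the $A$-part of the law reduces verbatim to \eqref{eq:asso-alg} for $(A,\cdot)$, which holds by hypothesis and therefore imposes no condition on $l,r$. The content of the proposition thus lives entirely in the $V$-component.

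For the $V$-component, I would collect all terms landing in $V$ and group them according to the three independent vectors $u,v,w$. The coefficient of $w$ on the two sides yields
$$l(x\cdot y)+r(x)l(y)+r(y)r(x)=l(x)l(y)+l(y)r(x)+r(x\cdot y),$$
which, after rearrangement, is precisely \eqref{eq:repre}; the coefficients of $u$ and $v$ give the same identity \eqref{eq:repre} with the pair $(x,y)$ replaced by $(y,z)$ and $(z,x)$ respectively. Since \eqref{eq:repre} is demanded for all pairs of elements of $A$, the vanishing of all three coefficients is equivalent to $(l,r,V)$ being a representation. As $u,v,w$ range independently over $V$, the $V$-component holds for all inputs if and only if these three coefficient identities hold, which establishes both implications at once.

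The argument is essentially a bookkeeping computation with no conceptual obstruction. The only point requiring care is the correct expansion of the doubly nested products and the faithful matching of each grouped coefficient with the appropriate cyclic instance of \eqref{eq:repre}, so that the three conditions collapse to the single representation axiom rather than to three a priori distinct constraints; verifying this collapse is the one place where a sign or index slip would be easy to make.
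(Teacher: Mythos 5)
Your proof is correct, but it takes a more self-contained route than the paper. The paper disposes of this proposition in one line by citing Proposition \ref{matched pair}: the semi-direct product is the special case of a matched pair in which $B=V$ carries the zero multiplication and the actions $l_B,r_B$ of $V$ on $A$ are zero, whereupon conditions \eqref{eq:mp1}--\eqref{eq:mp2} become vacuous and the only surviving requirement is that $(l,r,V)$ be a representation of $(A,\cdot)$. You instead verify the $A_3$-associative law for $\ast$ directly: the $A$-component reproduces \eqref{eq:asso-alg}, and the $V$-component, after collecting the coefficients of $u$, $v$, $w$, yields three identities that are the single axiom \eqref{eq:repre} evaluated at the pairs $(y,z)$, $(z,x)$, $(x,y)$ respectively --- your displayed coefficient of $w$ is exactly right, and the collapse of the three conditions to one is genuine because \eqref{eq:repre} is quantified over all pairs. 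The two arguments have the same computational content (the proof of Proposition \ref{matched pair} is the same expansion carried out in greater generality), so what you lose in brevity you gain in not having to check that the degenerate data $(V,0,0,0)$ legitimately satisfies the hypotheses of the matched-pair proposition; either justification is acceptable.
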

\begin{proof}
It follows Proposition \ref{matched pair}.
\end{proof}

\begin{proposition} \label{adm-semi-dp}
Let $(A,\cdot)$ be an admissible $A_3$-associative algebra with an adjoint representation $(L,R,A)$. 
Then the semi-direct product $A_3$-associative algebra $A \ltimes_{L,R} A$ is admissible.
\end{proposition}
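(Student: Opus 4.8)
The plan is to write a general element of $A \ltimes_{L,R} A$ as a pair $x + u \in A \oplus A$, where $x$ sits in the underlying algebra and $u$ in the representation space $V = A$, and to verify the admissibility identity \eqref{eq:admissible} for $\ast$ directly in components. By \eqref{eq:semi-dp} together with $L(x)v = x \cdot v$ and $R(y)u = u \cdot y$, the product reads
$$
(x + u) \ast (y + v) = x \cdot y + x \cdot v + u \cdot y,
$$
so its $A$-component is $x \cdot y$ and its $V$-component is $x \cdot v + u \cdot y$. First I would record that $A \ltimes_{L,R} A$ is indeed an $A_3$-associative algebra: this is Proposition \ref{semi-dp} applied to the adjoint representation $(L,R,A)$, which is a representation because condition \eqref{eq:repre} for $(L,R,A)$ is, after substituting $L(x)a = x\cdot a$ and $R(x)a = a\cdot x$, precisely the $A_3$-associative law \eqref{eq:asso-alg}.

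Next I would fix three arbitrary elements $X = x + u$, $Y = y + v$, $Z = z + w$ and expand the two sides of \eqref{eq:admissible},
$$
(X \ast Z) \ast Y - X \ast (Z \ast Y) \quad\text{and}\quad Y \ast (Z \ast X) - (Y \ast Z) \ast X,
$$
separating each into its $A$-component and its $V$-component. The $A$-component of \eqref{eq:admissible} for $A \ltimes_{L,R} A$ is literally \eqref{eq:admissible} for $(A,\cdot)$, hence holds by the admissibility hypothesis. All the content therefore lies in the $V$-component.

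For the $V$-component, expanding via the product formula presents the left-hand side as the sum of three brackets
$$
\big[(x \cdot z) \cdot v - x \cdot (z \cdot v)\big] + \big[(x \cdot w) \cdot y - x \cdot (w \cdot y)\big] + \big[(u \cdot z) \cdot y - u \cdot (z \cdot y)\big],
$$
each linear in exactly one of the module variables $v$, $w$, $u$. Applying \eqref{eq:admissible} of $(A,\cdot)$ to each bracket separately rewrites them as $v \cdot (z \cdot x) - (v \cdot z) \cdot x$, then $y \cdot (w \cdot x) - (y \cdot w) \cdot x$, then $y \cdot (z \cdot u) - (y \cdot z) \cdot u$, which are exactly the three brackets making up the $V$-component of $Y \ast (Z \ast X) - (Y \ast Z) \ast X$; thus the two sides agree. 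The argument is mechanical once the product is in components, and I do not anticipate a genuine obstacle: the only thing to watch is the bookkeeping, namely recognizing that the three brackets are each linear in a single module variable and that one invocation of \eqref{eq:admissible} pairs each left-hand bracket with the correct right-hand bracket.
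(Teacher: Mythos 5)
Your proof is correct and follows essentially the same route as the paper's: expand the product on $A \ltimes_{L,R} A$ in components, observe that the $A$-component of \eqref{eq:admissible} is admissibility of $(A,\cdot)$ itself, and reduce the $V$-component to three brackets each linear in one module variable and each killed by \eqref{eq:admissible}. The only cosmetic difference is that for the terms linear in the middle module variable the paper invokes \eqref{eq:ad-repre}, whereas you apply \eqref{eq:admissible} directly; for the adjoint representation these two conditions are literally the same identity, so the arguments coincide.
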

\begin{proof}
For all $x,y,z,d,e,f\in A$, we have 
\begin{eqnarray*}
  && \big((x+d) \ast (z+f)\big) \ast (y+e) - (x+d) \ast \big((z+f) \ast (y+e)\big) \\
  && - (y+e) \ast \big((z+f) \ast (x+d)\big) + \big((y+e) \ast (z+f)\big) \ast (x+d) \\
  &\overset{\eqref{eq:semi-dp}}{=}& \big(x \cdot z + L(x)f + R(z)d\big) \ast (y+e) - (x+d) \ast \big(z \cdot y + L(z)e + R(y)f\big)  \\
  && - (y+e) \ast \big(z \cdot x + L(z)d + R(x)f\big) + \big(y \cdot z + L(y)f + R(z)e\big) \ast (x+d) \\
  &\overset{\eqref{eq:semi-dp}}{=}& (x \cdot z) \cdot y + L(x\cdot z)e +R(y)\big(L(x)f + R(z)d\big) - x\cdot (z\cdot y) -L(x)\big(L(z)e + R(y)f\big)  \\
  && -R(z\cdot y)d - y\cdot (z\cdot x) -L(y)\big(L(z)d+R(x)f\big) -R(z\cdot x)e +(y\cdot z)\cdot x + L(y\cdot z)d   \\  
  && + R(x)\big(L(y)f + R(z)e\big) \\
  &\overset{\eqref{eq:ad-repre}\eqref{eq:admissible}}{=}& L(x\cdot z)e +R(y)R(z)d -L(x)L(z)e -R(z\cdot y)d -L(y)L(z)d -R(z\cdot x)e + L(y\cdot z)d \\
  && + R(x)R(z)e \\
  &=& (x\cdot z)\cdot e + (d\cdot z)\cdot y -x\cdot (z\cdot e) - d\cdot (z\cdot y) -y\cdot (z\cdot d) -e\cdot (z\cdot x) + (y\cdot z)\cdot d \\
  && + (e\cdot z)\cdot x  \\
  &\overset{\eqref{eq:admissible}}{=}& 0.
\end{eqnarray*}
Thus, the semi-direct product $A_3$-associative algebra $A \ltimes_{L,R} A$ is admissible.
\end{proof}

\subsection{Quadratic $A_3$-associative algebras}

\begin{definition}
A \textbf{quadratic $A_3$-associative algebra} is a triple $(A,\cdot,\mathcal{B})$, where $(A,\cdot)$ is an $A_3$-associative algebra, $\mathcal{B}: A \times A \rightarrow \mathbb{K}$ is a nondegenerate symmetric bilinear form on $A$ which is invariant, that is, satisfying
\begin{equation} \label{eq:invar}
  \mathcal{B}(x\cdot y,z) = \mathcal{B}(x,y\cdot z),\quad \forall~ x,y,z\in A.
\end{equation}
\end{definition}

Let $(A,\cdot)$ be an admissible $A_3$-associative algebra, $(L,R,A)$ be an adjoint representation of $(A,\cdot)$, $L^*,R^*:A \rightarrow End(A^*)$ be two linear maps. The dual actions of $(A,\cdot)$ are defined as
\begin{equation} \label{eq:ad-dual}
   \langle L^*(x)a^*,y \rangle = \langle a^*,L(x)y \rangle,~ \langle R^*(x)a^*,y \rangle = \langle a^*,R(x)y \rangle, \quad \forall~ x,y\in A,~a^* \in A^*.
\end{equation}
Then by Proposition \ref{co-rep}, $(R^*,L^*,A^*)$ is a representation. 

\begin{lemma}
Let $(A,\cdot,\mathcal{B})$ be a quadratic $A_3$-associative algebra, where $(A,\cdot)$ is an admissible $A_3$-associative algebra. Then the adjoint representation $(L,R,A)$ and the coadjoint representation $(R^*,L^*,A^*)$ are equivalent. 
\end{lemma}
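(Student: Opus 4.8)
The plan is to use the nondegenerate form $\mathcal{B}$ to build the intertwining isomorphism explicitly. Define $\phi \colon A \to A^*$ by $\langle \phi(x), y \rangle = \mathcal{B}(x,y)$ for all $x,y \in A$. Since $\mathcal{B}$ is nondegenerate, $\phi$ is a linear isomorphism, and since $\mathcal{B}$ is symmetric we also have $\langle \phi(x), y \rangle = \langle \phi(y), x \rangle$. This is the only natural candidate, so the entire argument reduces to checking that $\phi$ intertwines the two representations in the sense of \eqref{eq:re-iso}. I note that admissibility of $(A,\cdot)$ is what guarantees, via Proposition \ref{co-rep} and Definition \ref{admissible-re} applied to the adjoint representation, that $(R^*,L^*,A^*)$ is a bona fide representation in the first place; it plays no further role in the intertwining verification itself.

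Following Definition \ref{re-equivalent} with $(l,r,V) = (L,R,A)$ and $(l',r',V') = (R^*,L^*,A^*)$, I must verify the two identities $\phi(L(x)v) = R^*(x)\phi(v)$ and $\phi(R(x)v) = L^*(x)\phi(v)$ for all $x,v \in A$. Pairing each side against an arbitrary $y \in A$ and unwinding the definition of $\phi$, the adjoint representation ($L(x)v = x \cdot v$ and $R(x)v = v \cdot x$), and the dual actions \eqref{eq:ad-dual}, these become, respectively, $\mathcal{B}(x \cdot v, y) = \mathcal{B}(v, y \cdot x)$ and $\mathcal{B}(v \cdot x, y) = \mathcal{B}(v, x \cdot y)$. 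The second is exactly the invariance \eqref{eq:invar} of $\mathcal{B}$. The first follows from a short chain mixing symmetry and invariance, namely $\mathcal{B}(x \cdot v, y) = \mathcal{B}(y, x \cdot v) = \mathcal{B}(y \cdot x, v) = \mathcal{B}(v, y \cdot x)$, where the middle equality is \eqref{eq:invar}. Thus $\phi$ intertwines $(L,R,A)$ with $(R^*,L^*,A^*)$, and the two representations are equivalent.

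There is no genuine analytic difficulty here; the content is entirely bookkeeping. The one point requiring care is that the adjoint and coadjoint representations enter with their left and right actions swapped, so that $R^*$ occupies the left slot and $L^*$ the right slot of the target representation. Consequently one must keep $\mathcal{B}(v, y \cdot x)$ and $\mathcal{B}(v, x \cdot y)$ distinct and apply invariance in the correct direction, which is the only place where a sign- or order-error could creep in. Once the pairings are set up against a test vector $y$, both identities collapse to \eqref{eq:invar} (directly in one case, after a symmetry-invariance-symmetry reshuffle in the other), completing the proof.
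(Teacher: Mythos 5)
Your proposal is correct and follows essentially the same route as the paper: both take the isomorphism $\phi=\mathcal{B}^\natural$ induced by the nondegenerate form and reduce the two intertwining identities of Definition \ref{re-equivalent} to the invariance \eqref{eq:invar} together with symmetry of $\mathcal{B}$. Your added remark that admissibility is only needed to ensure $(R^*,L^*,A^*)$ is a representation at all, and plays no role in the intertwining computation, is accurate and a nice clarification, but does not change the argument.
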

\begin{proof}
Let $\mathcal{B}^\natural: A \rightarrow A^*$ be a linear isomorphism given by 
\begin{equation} \label{eq:lin-iso}
  \langle \mathcal{B}^\natural(x),y \rangle=\mathcal{B}(x,y), \quad \forall~ x,y\in A.
\end{equation}
Since $\mathcal{B}$ is symmetric and by the invariant condition \eqref{eq:invar}, we have
\begin{equation} \label{eq:circle-invar}
   \mathcal{B}(x\cdot y,z) = \mathcal{B}(z,x\cdot y) = \mathcal{B}(z\cdot x, y)=  \mathcal{B}(y,z\cdot x),\quad \forall~z\in A.
\end{equation}
Then
\begin{equation*}
  \langle \mathcal{B}^\natural(L(x)y),z \rangle \overset{\eqref{eq:lin-iso}}{=} \mathcal{B}(x \cdot y,z) \overset{\eqref{eq:circle-invar}}{=} \mathcal{B}(y,z\cdot x) \overset{\eqref{eq:lin-iso}}{=} \langle \mathcal{B}^\natural(y),z\cdot x \rangle \overset{\eqref{eq:ad-dual}}{=} \langle R^*(x) \mathcal{B}^\natural (y),z\rangle,
\end{equation*}
that is, $\mathcal{B}^\natural(L(x))y = R^*(x) \mathcal{B}^\natural (y)$.
Similarly, we have $\mathcal{B}^\natural(R(x))y = L^*(x) \mathcal{B}^\natural (y)$. If we take $\phi=\mathcal{B}^\natural$, then by Definition \ref{re-equivalent}, the conclusion holds.
\end{proof}

Let $(L,R,A)$ be an adjoint representation of an admissible $A_3$-associative algebra $(A,\cdot)$. By Proposition \ref{semi-dp}, the multiplication $\ast_\mathfrak{d}: (A \oplus A^*) \times (A \oplus A^*) \rightarrow A \oplus A^*$ on $\mathfrak{d}=A \oplus A^*$ is defined as
\begin{equation} \label{eq:ad-semi-dp}
  (x+a^*)\ast_\mathfrak{d} (y+b^*) = x \cdot y + L(x)b^* + R(y)a^*,\quad \forall~ x,y\in A,~a^*,b^* \in A^*.
\end{equation}

\begin{example}
Let $(A,\cdot)$ be an admissible $A_3$-associative algebra and $(L,R,A)$ be the adjoint representation. By Proposition \ref{co-rep},
there exists a natural nondegenerate symmetric bilinear form $\mathcal{B}_\mathfrak{d}$ on $\mathfrak{d}$ defined as
\begin{equation} \label{eq:sym-bilf}
  \mathcal{B}_\mathfrak{d}(x+a^*,y+b^*) = \langle b^*,x \rangle + \langle a^*,y \rangle, \quad \forall~ x,y\in A, ~ a^*,b^*\in A^*.
\end{equation}
In fact, for all $x,y,z \in A$, $a^*,b^*,c^* \in A^*$, we have
\begin{align*}
  \mathcal{B}_\mathfrak{d}\big((x+a^*) \cdot_d (y+b^*),z+c^* \big)
  \overset{\eqref{eq:ad-semi-dp}}{=}& \mathcal{B}_\mathfrak{d} \big(x \cdot y + R^*(x)b^* + L^*(y)a^*,z+c^* \big) \\
  \overset{\eqref{eq:sym-bilf}}{=}& \langle c^*,x\cdot y \rangle + \langle R^*(x)b^* + L^*(y)a^*, z \rangle \\
  \overset{\eqref{eq:ad-dual}}{=}& \langle c^*,x\cdot y \rangle + \langle b^*,z \cdot x \rangle + \langle a^*,y \cdot z \rangle, \\
  \overset{\eqref{eq:ad-dual}}{=}& \langle R^*(y)c^* + L^*(z)b^*,x \rangle + \langle a^*, y \cdot z \rangle \\
  \overset{\eqref{eq:sym-bilf}}{=}& \mathcal{B}_\mathfrak{d} \big(x+a^*, y \cdot z + R^*(y)c^* + L^*(z)b^* \big) \\
  \overset{\eqref{eq:ad-semi-dp}}{=}& \mathcal{B}_\mathfrak{d} \big(x+a^*, (y+b^*) \cdot_d (z+c^*) \big).
\end{align*}
Then $\mathcal{B}_\mathfrak{d}$ is invariant, that is, $(A \ltimes_{R^*,L^*} A^*, \cdot_\mathfrak{d}, \mathcal{B}_\mathfrak{d})$ is a quadratic $A_3$-associative algebra.
\end{example}

\begin{definition} \label{te-form}
Let $(A,\cdot)$ be an $A_3$-associative algebra and $r \in A \otimes A$. Let $h:A \rightarrow End(A \otimes A)$ be a linear map defined by
\begin{equation} \label{eq:hx}
  h(x) = id \otimes L(x) - R(x) \otimes id, \quad \forall~ x\in A,
\end{equation}
where $id$ denotes the identity map. The element $r$ is called \textbf{invariant} on $(A,\cdot)$ if $h(x)r=0$ for all $x \in A$.  
\end{definition}

Let $r=\sum\limits_{i} (u_{i}\otimes v_{i}) \in A\otimes A$, then $h(x)r= \sum\limits_{i} (u_{i}\otimes x\cdot v_{i} -u_{i} \cdot x\otimes v_{i}) =0$, which is equivalent to $\mathcal{B}(x,y\cdot z) -\mathcal{B}(x\cdot y,z) =0$.
Thus, Definition \ref{te-form} gives the tensor forms of invariant bilinear forms on $A_3$-associative algebras.

\begin{proposition}
Let $(A,\cdot)$ be an admissible $A_3$-associative algebra and $\mathcal{B}: A \times A \rightarrow \mathbb{K}$ be a nondegenerate bilinear form. Let $\mathcal{B}^\natural: A \rightarrow A^*$ be a linear isomorphism given by \eqref{eq:lin-iso}. If $\widetilde{\mathcal{B}} \in A \otimes A$ satisfies
\begin{equation} \label{eq:tensor-liniso}
  \langle \widetilde{\mathcal{B}},a^* \otimes b^* \rangle = \langle {\mathcal{B}^\natural}^{-1} (a^*),b^* \rangle, \quad \forall~ a^*,b^* \in A^*,
\end{equation}
then $(A,\cdot,\mathcal{B})$ is a quadratic $A_3$-associative algebra if and only if $\widetilde{\mathcal{B}}$ is symmetric and invariant.
\end{proposition}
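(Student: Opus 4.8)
The plan is to unwind the identification \eqref{eq:tensor-liniso} and split the statement into two separate equivalences, one for symmetry and one for invariance, using that ``quadratic'' means precisely nondegenerate, symmetric \emph{and} invariant (nondegeneracy being assumed). Writing $\widetilde{\mathcal{B}} = \sum_i u_i \otimes v_i$, equation \eqref{eq:tensor-liniso} says exactly that $\widetilde{\mathcal{B}}$ is the element of $A\otimes A$ inverse to the form $\mathcal{B}$. Concretely, setting $a^* = \mathcal{B}^\natural(w)$ and letting $b^*$ range over $A^*$ yields the working identity $\sum_i \mathcal{B}(w,u_i)\,v_i = w$ for all $w \in A$; once symmetry of $\mathcal{B}$ is available, the companion identity $\sum_i \mathcal{B}(w,v_i)\,u_i = w$ follows as well. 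These two identities are the engine of the whole computation.

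First I would treat symmetry. Pairing the symmetry condition $\langle \widetilde{\mathcal{B}}, a^* \otimes b^*\rangle = \langle \widetilde{\mathcal{B}}, b^* \otimes a^*\rangle$ against $a^* = \mathcal{B}^\natural(x)$ and $b^* = \mathcal{B}^\natural(y)$, and using \eqref{eq:tensor-liniso} together with \eqref{eq:lin-iso}, turns the left-hand side into $\mathcal{B}(y,x)$ and the right-hand side into $\mathcal{B}(x,y)$. Since $\mathcal{B}^\natural$ is an isomorphism, this shows $\widetilde{\mathcal{B}}$ is symmetric if and only if $\mathcal{B}$ is symmetric.

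For invariance I would, thanks to the previous step, work under the assumption that $\mathcal{B}$ (equivalently $\widetilde{\mathcal{B}}$) is symmetric. Using \eqref{eq:hx}, the dual actions \eqref{eq:ad-dual}, and the two working identities above, I would compute the pairing of $h(x)\widetilde{\mathcal{B}}$ against $\mathcal{B}^\natural(y)\otimes \mathcal{B}^\natural(z)$: the term $(\mathrm{id}\otimes L(x))\widetilde{\mathcal{B}}$ contributes $\mathcal{B}(z, x\cdot y)$ and the term $(R(x)\otimes \mathrm{id})\widetilde{\mathcal{B}}$ contributes $\mathcal{B}(y, z\cdot x)$, so that
\[
  \big\langle h(x)\widetilde{\mathcal{B}},\ \mathcal{B}^\natural(y)\otimes \mathcal{B}^\natural(z)\big\rangle = \mathcal{B}(z, x\cdot y) - \mathcal{B}(y, z\cdot x).
\]
Because $\mathcal{B}^\natural$ is an isomorphism, $\widetilde{\mathcal{B}}$ is invariant in the sense of Definition \ref{te-form} if and only if $\mathcal{B}(z, x\cdot y) = \mathcal{B}(y, z\cdot x)$ for all $x,y,z$. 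Finally I would check that, under symmetry, this cyclic identity is equivalent to the invariance \eqref{eq:invar}: one direction is exactly \eqref{eq:circle-invar}, and for the converse, rewriting the identity as $\mathcal{B}(x\cdot y,z)=\mathcal{B}(z\cdot x,y)$ and cycling $(x,y,z)\mapsto(z,x,y)$ yields full cyclic symmetry of $(x,y,z)\mapsto\mathcal{B}(x\cdot y,z)$, which combined with symmetry of $\mathcal{B}$ gives \eqref{eq:invar}. Combining the symmetry and invariance equivalences proves the proposition.

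I expect the main obstacle to be the bookkeeping in the invariance computation, specifically the need for the companion identity $\sum_i \mathcal{B}(w,v_i)\,u_i = w$, which is valid only once $\mathcal{B}$ is symmetric. This is the reason for organizing the argument so that the symmetry equivalence is settled first and the invariance equivalence is then carried out under the symmetry hypothesis; without it the two slots of $\widetilde{\mathcal{B}}$ behave asymmetrically and the clean formula for $\langle h(x)\widetilde{\mathcal{B}}, \mathcal{B}^\natural(y)\otimes\mathcal{B}^\natural(z)\rangle$ breaks down. The remaining steps are routine translations between the pairing on $A\otimes A$ and the form $\mathcal{B}$, handled uniformly by \eqref{eq:ad-dual} and \eqref{eq:lin-iso}; note in particular that the final cyclic manipulation is purely formal and does not invoke the $A_3$-associative law or admissibility.
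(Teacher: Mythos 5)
Your proposal is correct and follows essentially the same route as the paper: first the symmetry of $\mathcal{B}$ is matched with the symmetry of $\widetilde{\mathcal{B}}$, and then, under that symmetry, the pairing of $h(x)\widetilde{\mathcal{B}}$ against $\mathcal{B}^\natural(y)\otimes\mathcal{B}^\natural(z)$ is identified with the difference of two cyclically related values of $\mathcal{B}$, whose vanishing is equivalent to \eqref{eq:invar}. If anything, you are more explicit than the paper about two points it leaves implicit, namely that the identification of $(R(x)\otimes id)\widetilde{\mathcal{B}}$ and $(id\otimes L(x))\widetilde{\mathcal{B}}$ with the intended expressions already uses the symmetry of $\widetilde{\mathcal{B}}$, and that the resulting cyclic identity must still be converted into \eqref{eq:invar} by iterating the cyclic permutation.
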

\begin{proof}
By \eqref{eq:lin-iso}, we have $\mathcal{B}$ is symmetric if and only if $\widetilde{\mathcal{B}}$ is symmetric. 
For all $x,y,z\in A$, let $a^*={\mathcal{B}^\natural}(x),b^*={\mathcal{B}^\natural}(y)$, we have
\begin{eqnarray*}
  && \mathcal{B}(x,y\cdot z)
  \overset{\eqref{eq:lin-iso}}{=} \langle \mathcal{B}^\natural(x), y\cdot z \rangle
  = \langle a^*,{\mathcal{B}^\natural}^{-1}(b^*)\cdot z \rangle
  \overset{\eqref{eq:ad-dual}}{=} \langle R^*(z)a^*, {\mathcal{B}^\natural}^{-1}(b^*) \rangle  \\
  &\overset{\eqref{eq:tensor-liniso}}{=}& \langle \widetilde{\mathcal{B}}, b^* \otimes R^*(z)a^* \rangle
  = \langle \big(R(z) \otimes id \big)\widetilde{\mathcal{B}}, a^* \otimes b^* \rangle,  \\
  && \mathcal{B}(y, z\cdot x)
  \overset{\eqref{eq:lin-iso}}{=} \langle \mathcal{B}^\natural(y), z\cdot x  \rangle
  = \langle b^*,z \cdot {\mathcal{B}^\natural}^{-1} (a^*) \rangle
  \overset{\eqref{eq:ad-dual}}{=} \langle L^*(z)b^*, {\mathcal{B}^\natural}^{-1}(a^*) \rangle  \\
  &\overset{\eqref{eq:tensor-liniso}}{=}& \langle \widetilde{\mathcal{B}}, a^* \otimes L^*(z)b^* \rangle
  = \langle \big(id \otimes L(z) \big)\widetilde{\mathcal{B}}, a^* \otimes b^* \rangle.
\end{eqnarray*}
Then, $(A,\cdot,\mathcal{B})$ is a quadratic $A_3$-associative algebra  if and only if $h(y)\widetilde{\mathcal{B}}=0$ for all $y \in A$. Thus, the conclusion follows.
\end{proof}

\section{Manin triples of $A_3$-associative algebras and $A_3$-associative bialgebras}

In this section, we introduce the concepts of Manin triples of $A_3$-associative algebras and $A_3$-associative bialgebras. We show that they are equivalent by matched pairs of $A_3$-associative algebras.

\subsection{Manin triples of $A_3$-associative algebras}

\begin{definition} \label{Manin triples}
Let $(A,\cdot)$ and $(A',\circ)$ be $A_3$-associative algebras. The triple $\big((A \oplus A'=\mathfrak{d},\cdot_\mathfrak{d},\mathcal{B}_\mathfrak{d}), A, A'\big)$ is called a \textbf{Manin triple of $A_3$-associative algebras} where
\begin{enumerate}[(1)]
  \item $(A \oplus A',\cdot_\mathfrak{d},\mathcal{B}_\mathfrak{d})$ is a quadratic $A_3$-associative algebra;
  \item $(A,\cdot)$ and $(A',\circ)$ are $A_3$-associative subalgebras of $(A \oplus A',\cdot_\mathfrak{d})$, such that $\mathfrak{d}=A \oplus A'$ as vector spaces;
  \item $(A,\cdot)$ and $(A',\circ)$ are isotropic, that is, for all $x,y\in A$, $a',b'\in A'$, $\mathcal{B}_\mathfrak{d}(x,y) =\mathcal{B}_\mathfrak{d}(a',b') =0$. 
\end{enumerate}
\end{definition}

To distinguish the left (or right) multiplications of different algebras, we denote the left (or right) multiplications on $(A,\cdot)$ as $L_\cdot$ (or $R_\cdot$), the left (or right) multiplications on $(A^*,\circ)$ as $L_\circ$ (or $R_\circ$).
\begin{proposition} \label{Mant-mp}
Let $(A,\cdot)$ and $(A^*,\circ)$ be admissible $A_3$-associative algebras, and let $(R_\cdot^*,L_\cdot^*,A^*)$ and $(R_\circ^*,L_\circ^*,A)$ be the coadjoint representations of $(A,\cdot)$ and $(A^*,\circ)$, respectively. 
Then there is a Manin triple of $A_3$-associative algebras $\big((A \oplus A^*=\mathfrak{d}, \cdot_\mathfrak{d}, \mathcal{B}_\mathfrak{d}),A,A^*\big)$ if and only if there is a matched pair of $A_3$-associative algebras $\big((A,\cdot),(A^*,\circ),R^*_\cdot,L^*_\cdot, R^*_\circ,L^*_\circ\big)$.
\end{proposition}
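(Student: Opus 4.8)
The plan is to prove both implications by reducing them to Proposition \ref{matched pair}, which already characterizes when $\mathfrak{d}=A\oplus A^*$ carries a matched-pair $A_3$-associative structure, together with the explicit bilinear form $\mathcal{B}_\mathfrak{d}$ of \eqref{eq:sym-bilf}. Throughout I take $\mathcal{B}_\mathfrak{d}$ to be the canonical pairing $\mathcal{B}_\mathfrak{d}(x+a^*,y+b^*)=\langle b^*,x\rangle+\langle a^*,y\rangle$. In the Manin-triple direction this is justified because a nondegenerate symmetric form for which $A$ and $A^*$ are isotropic restricts to a perfect pairing between $A$ and $A^*$; after identifying the second subalgebra with the dual of $A$ through this pairing, the form takes exactly this shape.

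\emph{Forward direction (matched pair $\Rightarrow$ Manin triple).} Starting from the matched pair $\big((A,\cdot),(A^*,\circ),R_\cdot^*,L_\cdot^*,R_\circ^*,L_\circ^*\big)$, Proposition \ref{matched pair} gives that the product $\cdot_\mathfrak{d}$ defined by \eqref{eq:mp-ds} with $l_A=R_\cdot^*$, $r_A=L_\cdot^*$, $l_B=R_\circ^*$, $r_B=L_\circ^*$ is $A_3$-associative. Equipping $\mathfrak{d}$ with $\mathcal{B}_\mathfrak{d}$ from \eqref{eq:sym-bilf}, symmetry and nondegeneracy of $\mathcal{B}_\mathfrak{d}$ and the fact that $A,A^*$ are isotropic subalgebras are immediate from the shapes of \eqref{eq:sym-bilf} and \eqref{eq:mp-ds}. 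The only real content is invariance, $\mathcal{B}_\mathfrak{d}\big((x+a^*)\cdot_\mathfrak{d}(y+b^*),z+c^*\big)=\mathcal{B}_\mathfrak{d}\big(x+a^*,(y+b^*)\cdot_\mathfrak{d}(z+c^*)\big)$. I would expand both sides using \eqref{eq:mp-ds} and \eqref{eq:sym-bilf}, obtaining six terms on each side, and match them in pairs using the dual-action relations \eqref{eq:ad-dual} and the symmetry of the pairing (for example $\langle L_\cdot^*(y)a^*,z\rangle=\langle a^*,y\cdot z\rangle$ and $\langle R_\cdot^*(x)b^*,z\rangle=\langle b^*,z\cdot x\rangle=\langle L_\cdot^*(z)b^*,x\rangle$). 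Each term coming from an $A$-product pairs with one coming from an $A^*$-product, so the matching is purely formal and invokes none of the compatibility identities \eqref{eq:mp1}, \eqref{eq:mp2}; this is the same mechanism as in the Example following \eqref{eq:ad-semi-dp}. Hence $\big((\mathfrak{d},\cdot_\mathfrak{d},\mathcal{B}_\mathfrak{d}),A,A^*\big)$ is a Manin triple.

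\emph{Backward direction (Manin triple $\Rightarrow$ matched pair).} Here $\cdot_\mathfrak{d}$ is $A_3$-associative, and since $A,A^*$ are subalgebras with $\mathfrak{d}=A\oplus A^*$, I would first decompose the mixed products along $A\oplus A^*$: for $x\in A$, $a^*\in A^*$ write $x\cdot_\mathfrak{d} a^*=\pi_A(x\cdot_\mathfrak{d} a^*)+\pi_{A^*}(x\cdot_\mathfrak{d} a^*)$ and similarly for $a^*\cdot_\mathfrak{d} x$, thereby defining linear maps $l_A,r_A\colon A\to\End(A^*)$ and $l_B,r_B\colon A^*\to\End(A)$ for which $\cdot_\mathfrak{d}$ is exactly the product \eqref{eq:mp-ds}. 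Proposition \ref{matched pair} then certifies that $\big((A,\cdot),(A^*,\circ),l_A,r_A,l_B,r_B\big)$ is a matched pair. It remains to identify these maps with the coadjoint ones, and this is where invariance and isotropy enter. For $x,y\in A$ and $b^*\in A^*$, invariance gives $\mathcal{B}_\mathfrak{d}(x\cdot_\mathfrak{d} y,b^*)=\mathcal{B}_\mathfrak{d}(x,y\cdot_\mathfrak{d} b^*)$; the left side equals $\langle b^*,x\cdot y\rangle$ because $x\cdot_\mathfrak{d} y=x\cdot y\in A$, while isotropy kills the $A$-component of $y\cdot_\mathfrak{d} b^*$ and collapses the right side to $\langle l_A(y)b^*,x\rangle$, so $\langle l_A(y)b^*,x\rangle=\langle b^*,x\cdot y\rangle=\langle R_\cdot^*(y)b^*,x\rangle$ for all $x$, giving $l_A=R_\cdot^*$. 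The identity $\mathcal{B}_\mathfrak{d}(a^*\cdot_\mathfrak{d} y,x)=\mathcal{B}_\mathfrak{d}(a^*,y\cdot_\mathfrak{d} x)$ yields $r_A=L_\cdot^*$, and the symmetric computations with $A$ and $A^*$ interchanged give $l_B=R_\circ^*$ and $r_B=L_\circ^*$. Thus the matched pair is exactly $\big((A,\cdot),(A^*,\circ),R_\cdot^*,L_\cdot^*,R_\circ^*,L_\circ^*\big)$.

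The most error-prone step is the six-term book-keeping in the forward invariance check, where every term must be paired correctly under \eqref{eq:ad-dual}; I expect it to close cleanly precisely because symmetry of the pairing makes each $A$-side term cancel against an $A^*$-side term without using the compatibility conditions. The conceptually delicate point, and the real crux, is the backward direction: reducing a general Manin-triple form to the canonical \eqref{eq:sym-bilf} and then using isotropy to annihilate the ``wrong-block'' contributions so that the extracted maps $l_A,r_A,l_B,r_B$ are forced to be the coadjoint representations. Keeping the projections $\pi_A,\pi_{A^*}$ and the pairing conventions consistent there is where care is required.
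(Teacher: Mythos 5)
Your proposal is correct and follows essentially the same route as the paper: both directions reduce to Proposition \ref{matched pair} with the canonical pairing \eqref{eq:sym-bilf}, the forward direction verifies invariance by the same formal six-term matching as in the example after \eqref{eq:ad-semi-dp}, and the backward direction uses invariance together with nondegeneracy/isotropy to force the mixed products $x\cdot_{\mathfrak d}b^*$ and $a^*\cdot_{\mathfrak d}y$ to be given by the coadjoint actions, exactly as in the paper's computation. Your remark justifying the reduction of a general isotropic form to the canonical shape \eqref{eq:sym-bilf} is a point the paper glosses over, but it does not change the argument.
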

\begin{proof}
Let $\big((\mathfrak{d},\cdot_\mathfrak{d},\mathcal{B}_\mathfrak{d}),A,A^*\big)$ be a Manin triple of $A_3$-associative algebras, where the nondegenerate symmetric bilinear form $\mathcal{B}_\mathfrak{d}$ on $\mathfrak{d}$ is given by \eqref{eq:sym-bilf}. For all $x,y \in A$, $a^*,b^* \in A^*$, we have
\begin{align*}
  \mathcal{B}_\mathfrak{d}(x \cdot_\mathfrak{d} b^*,y)
  &\overset{\eqref{eq:circle-invar}\eqref{eq:ad-semi-dp}}{=} \mathcal{B}_\mathfrak{d}(y \cdot x , b^*)
  \overset{\eqref{eq:sym-bilf}}{=} \langle y \cdot x, b^* \rangle
  \overset{\eqref{eq:ad-dual}}{=} \langle y, R^*_\cdot(x)b^* \rangle
  \overset{\eqref{eq:sym-bilf}}{=} \mathcal{B}_\mathfrak{d} \big(R^*_\cdot(x)b^*,y \big),\\
  \mathcal{B}_\mathfrak{d}(x \cdot_\mathfrak{d} b^*, a^*)
  &\overset{\eqref{eq:circle-invar}\eqref{eq:ad-semi-dp}}{=} \mathcal{B}_\mathfrak{d}(x, b^* \circ a^*)
  \overset{\eqref{eq:sym-bilf}}{=} \langle x, b^* \circ a^* \rangle
  \overset{\eqref{eq:ad-dual}}{=} \langle L^*_\circ(b^*)x, a^* \rangle
  \overset{\eqref{eq:sym-bilf}}{=} \mathcal{B}_\mathfrak{d} \big(L^*_\circ(b^*)x, a^* \big).
\end{align*}
Thus,
\begin{equation*}
  \mathcal{B}_\mathfrak{d}(x \cdot_\mathfrak{d} b^*, y+a^*)
  = \mathcal{B}_\mathfrak{d} \big(R^*_\cdot(x)b^* + L^*_\circ(b^*)x, y+a^* \big).
\end{equation*}
Since $\mathcal{B}_\mathfrak{d}$ is nondegenerate, we have
\begin{equation*}
  x \cdot_\mathfrak{d} b^* = R^*_\cdot(x)b^* + L^*_\circ(b^*)x.
\end{equation*}
Similarly, we have
\begin{equation*}
  a^* \cdot_\mathfrak{d} y = L^*_\cdot(y)a^* + R^*_\circ(a^*)y.
\end{equation*}
Then
\begin{eqnarray} 
  (x + a^*) \cdot_\mathfrak{d} (y + b^*) &=& x\cdot y + x \cdot_\mathfrak{d} b^* + a^* \cdot_\mathfrak{d} y + a^* \circ b^* \label{eq:mpMt} \\ 
  &=& x\cdot y + R^*_\circ(a^*)y + L^*_\circ(b^*)x + a^* \circ b^* + R^*_\cdot(x)b^* + L^*_\cdot(y)a^*. \notag
\end{eqnarray}
Thus, $\big((A,\cdot),(A^*,\circ),R^*_\cdot,L^*_\cdot,R^*_\circ,L^*_\circ \big)$ is a matched pair of $A_3$-associative algebra.

Conversely, let $\big((A,\cdot),(A^*,\circ),R^*_\cdot,L^*_\cdot, R^*_\circ,L^*_\circ\big)$ be a matched pair of $A_3$-associative algebras, then by \eqref{eq:mpMt}, we have

(1) $\mathcal{B}_\mathfrak{d}$ is invariant on the $A_3$-associative algebra $(A \oplus A^*,\cdot_\mathfrak{d})$. Then $(A \oplus A^*,\cdot_\mathfrak{d},\mathcal{B}_\mathfrak{d})$ is a quadratic $A_3$-associative algebra;

(2) $(A,\cdot)$ and $(A^*,\circ)$ are $A_3$-associative subalgebras of $(A \oplus A^*,\cdot_\mathfrak{d})$, such that $\mathfrak{d}=A \oplus A^*$ as vector spaces;

(3) for all $x,y\in A$, $a^*,b^*\in A^*$, we have $\mathcal{B}_\mathfrak{d}(x,y) =\mathcal{B}_\mathfrak{d}(a^*,b^*) =0$. 

Therefore, $\big((A \oplus A^*=\mathfrak{d}, \cdot_\mathfrak{d}, \mathcal{B}_\mathfrak{d}),A,A^*\big)$ is a Manin triple of $A_3$-associative algebras if and only if $\big((A,\cdot),(A^*,\circ),R^*_\cdot,L^*_\cdot, R^*_\circ,L^*_\circ\big)$ is a matched pair of $A_3$-associative algebras.
\end{proof}

\subsection{$A_3$-associative bialgebras}

\begin{definition}
An \textbf{$A_3$-associative coalgebra} is a pair $(A,\Delta)$, where $A$ is a vector space and $\Delta:A \rightarrow A \otimes A$ is a linear map satisfying
\begin{equation} \label{eq:asso-coalg}
  (id^{\otimes3} + \xi + \xi^2)(\Delta \otimes id - id \otimes \Delta)\Delta =0,
\end{equation}
where $\xi(x\otimes y\otimes z)=y\otimes z\otimes x$ for all $x,y,z \in A$.
\end{definition}

\begin{proposition}
Let $A$ be a vector space, $\Delta:A \rightarrow A \otimes A$ be a linear map. Let $\circ: A^* \otimes A^* \rightarrow A^*$ be a linear map satisfying
\begin{equation} \label{eq:co-mul}
  \langle a^* \circ b^*,x \rangle = \langle a^* \otimes b^*,\Delta(x) \rangle, \quad \forall~ a^*,b^* \in A^*, x\in A.
\end{equation}
Then $(A,\Delta)$ is an $A_3$-associative coalgebra if and only if $(A^*,\circ)$ is an $A_3$-associative algebra. 
\end{proposition}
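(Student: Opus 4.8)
The plan is to dualize the coassociativity-type condition \eqref{eq:asso-coalg} through the nondegenerate pairing between $A^{\otimes 3}$ and $(A^*)^{\otimes 3}$ furnished by \eqref{eq:action}, which is available precisely because all spaces are finite-dimensional. The underlying idea is that $(\Delta \otimes id)\Delta$ and $(id \otimes \Delta)\Delta$ are the transposes of the two ways of forming a triple product, $(a^*,b^*,c^*) \mapsto (a^* \circ b^*)\circ c^*$ and $(a^*,b^*,c^*) \mapsto a^* \circ (b^* \circ c^*)$, on $A^*$.

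First I would unfold the defining relation \eqref{eq:co-mul} twice. Applying it once gives $\langle a^* \circ b^*, y \rangle = \langle a^* \otimes b^*, \Delta(y) \rangle$, and feeding this back into the outer product yields
\[
\langle (a^* \circ b^*) \circ c^*, x \rangle = \langle a^* \otimes b^* \otimes c^*, (\Delta \otimes id)\Delta(x) \rangle, \quad \langle a^* \circ (b^* \circ c^*), x \rangle = \langle a^* \otimes b^* \otimes c^*, (id \otimes \Delta)\Delta(x) \rangle,
\]
for all $a^*,b^*,c^* \in A^*$ and $x \in A$. Thus the ``one-bracketing'' terms on the two sides of the $A_3$-associative law \eqref{eq:asso-alg} for $(A^*,\circ)$ correspond, under the pairing, to exactly the two maps appearing in \eqref{eq:asso-coalg}.

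Next I would handle the cyclic sum. The three cyclic rotations $(a^*,b^*,c^*)$, $(b^*,c^*,a^*)$, $(c^*,a^*,b^*)$ occurring on each side of \eqref{eq:asso-alg} are matched, across the pairing, by the operators $id^{\otimes 3}$, $\xi^2$, $\xi$ acting on the $A^{\otimes 3}$ slot: a direct check from $\xi(u \otimes v \otimes w) = v \otimes w \otimes u$ gives $\langle b^* \otimes c^* \otimes a^*, T \rangle = \langle a^* \otimes b^* \otimes c^*, \xi^2 T \rangle$ and $\langle c^* \otimes a^* \otimes b^*, T \rangle = \langle a^* \otimes b^* \otimes c^*, \xi T \rangle$ for every $T \in A^{\otimes 3}$. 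Summing the three contributions on each side, the $A_3$-associative law for $(A^*,\circ)$, paired against $x$, becomes
\[
\langle a^* \otimes b^* \otimes c^*, (id^{\otimes 3} + \xi + \xi^2)(\Delta \otimes id - id \otimes \Delta)\Delta(x) \rangle = 0.
\]
Since $a^*,b^*,c^* \in A^*$ and $x \in A$ are arbitrary and the pairing \eqref{eq:action} is nondegenerate, this holds identically if and only if $(id^{\otimes 3} + \xi + \xi^2)(\Delta \otimes id - id \otimes \Delta)\Delta = 0$, which is \eqref{eq:asso-coalg}; this establishes both implications simultaneously.

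The step I expect to require the most care is the bookkeeping of $\xi$ and its transpose: one must verify that the cyclic symmetrization of the product on $A^*$ translates into precisely the operator $id^{\otimes 3} + \xi + \xi^2$ on the coalgebra side, with the correct powers of $\xi$ attached to the correct rotations, rather than some other arrangement of the three permutations. Once the two relabeling identities above are pinned down, everything else is a routine unwinding of \eqref{eq:co-mul} and \eqref{eq:action}.
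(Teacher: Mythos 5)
Your argument is correct and follows essentially the same route as the paper's proof: dualize \eqref{eq:co-mul} twice to pair the two bracketings of $\circ$ with $(\Delta\otimes id)\Delta$ and $(id\otimes\Delta)\Delta$, convert the cyclic rotations of $(a^*,b^*,c^*)$ into the operators $id^{\otimes3},\xi^2,\xi$ on the $A^{\otimes3}$ side, and conclude by nondegeneracy of the pairing. Your relabeling identities assign the correct powers of $\xi$ to the correct rotations, matching the paper's term-by-term computation.
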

\begin{proof}
Let 
\begin{equation} \label{eq:co-delta}
\Delta^*(a^* \otimes b^*)=  a^* \circ b^*,\quad \forall~ a^*,b^* \in A^*.
\end{equation}
Using the Sweedler notation, we denote $\Delta(x):=x_1\otimes x_2$, then we have
\begin{eqnarray*}
  && \langle (\Delta^*\otimes id)(a^*\otimes b^*\otimes c^*), \Delta(x) \rangle \\
  &=& \langle \Delta^*(a^*\otimes b^*) \otimes c^*, \Delta(x) \rangle  \\
  &=& \langle \Delta^*(a^*\otimes b^*), x_1 \rangle \langle c^*, x_2 \rangle \\
  &\overset{\eqref{eq:co-mul}}{=}& \langle a^*\otimes b^*, \Delta(x_1) \rangle \langle c^*, x_2 \rangle  \\
  &=& \langle a^*\otimes b^*\otimes c^*, \Delta(x_1) \otimes x_2 \rangle \\
  &=& \langle a^*\otimes b^*\otimes c^*, (\Delta \otimes id) \Delta(x) \rangle.
\end{eqnarray*}
For all $x \in A, a^*,b^*,c^* \in A^*$, we have
\begin{eqnarray*}
  \langle (a^* \circ b^*) \circ c^* - a^* \circ (b^* \circ c^*), x \rangle
  &=& \langle \big(\Delta^*(\Delta^*\otimes id)-\Delta^*(id \otimes \Delta^*)\big) (a^*\otimes b^*\otimes c^*),x \rangle  \\
  &\overset{\eqref{eq:co-mul}}{=}& \langle a^*\otimes b^*\otimes c^*,(\Delta\otimes id - id \otimes \Delta) \Delta(x)\rangle,  \\
  \langle(b^* \circ c^*) \circ a^* - b^* \circ (c^* \circ a^*), x \rangle
  &=& \langle \big(\Delta^*(\Delta^*\otimes id)-\Delta^*(id \otimes \Delta^*) \big) (b^*\otimes c^*\otimes a^*), x \rangle  \\
  &\overset{\eqref{eq:co-mul}}{=}& \langle a^*\otimes b^*\otimes c^*, \xi^2(\Delta\otimes id -id \otimes \Delta) \Delta(x)\rangle,  \\
  \langle(c^* \circ a^*) \circ b^* - c^* \circ (a^* \circ b^*), x \rangle
  &=& \langle \big(\Delta^*(\Delta^*\otimes id)-\Delta^*(id \otimes \Delta^*) \big) (c^*\otimes a^*\otimes b^*), x \rangle  \\
  &\overset{\eqref{eq:co-mul}}{=}& \langle a^*\otimes b^*\otimes c^*, \xi(\Delta\otimes id -id \otimes \Delta) \Delta(x) \rangle.
\end{eqnarray*}
Therefore, \eqref{eq:asso-alg} holds if and only if \eqref{eq:asso-coalg} holds. Thus, the conclusion follows.
\end{proof}

\begin{remark}
Let $A$ be an associative algebra, and let $\Delta:A \rightarrow A \otimes A$ satisfy the coassociativity condition
\begin{equation} \label{co-coasso}
  (\Delta \otimes id) \Delta = (id \otimes \Delta)\Delta.
\end{equation}
Then $(A,\Delta)$ is an coassociative coalgebra. Moreover, the coassociative coalgebra is an $A_3$-associative coalgebra.
\end{remark}

\begin{definition} \label{bialgebra}
An \textbf{$A_3$-associative bialgebra} is a triple $(A,\cdot,\Delta)$, where $(A,\cdot)$ is an $A_3$-associative algebra, $(A,\Delta)$ is an $A_3$-associative coalgebra and for all $x,y \in A$,
\begin{eqnarray}
  && \quad \quad \quad (\tau-id^{\otimes2}) \big(\Delta(x \cdot y) -(R(y)\otimes id)\Delta(x) -(id\otimes L(x))\Delta(y) \big) \notag \\
  && + \big(id \otimes L(y)-R(y)\otimes id \big) \tau\Delta(x) + \big(L(x) \otimes id -id\otimes R(x) \big)\Delta(y) =0,  \label{eq:asso-bialg} \\
  && \quad \quad \Delta(x \cdot y -y \cdot x) + \big((\tau+id^{\otimes2})(id \otimes L(y)-R(y)\otimes id) \big) \Delta(x) \notag \\
  && \quad \quad \quad \quad \quad -\big(id \otimes L(x) -R(x)\otimes id \big) \big(\Delta(y)- \tau\Delta(y) \big) =0.  \label{eq:asso-bialg2}
\end{eqnarray}
\end{definition}

\begin{definition}
An \textbf{admissible $A_3$-associative coalgebra} is an $A_3$-associative coalgebra $(A,\Delta)$ satisfying
\begin{equation} \label{eq:ad-asso-coalg}
  \xi(\tau\otimes id)(\Delta \otimes id)\Delta - (id\otimes \tau)(id \otimes \Delta)\Delta + \xi^2 (\Delta \otimes id - id \otimes \Delta)\Delta =0,
\end{equation}
where $\tau(x \otimes y)=y\otimes x$ for all $x,y,z \in A$.
\end{definition}

\begin{proposition}
Let $A$ be a vector space, $\Delta:A \rightarrow A \otimes A$ be a linear map. Let $\circ: A^* \otimes A^* \rightarrow A^*$ be a linear map satisfying
\eqref{eq:co-mul}. Then $(A,\Delta)$ is an admissible $A_3$-associative coalgebra if and only if $(A^*,\circ)$ is an admissible $A_3$-associative algebra.
\end{proposition}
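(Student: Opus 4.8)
The plan is to reuse the preceding proposition, which already establishes that $(A,\Delta)$ is an $A_3$-associative coalgebra if and only if $(A^*,\circ)$ is an $A_3$-associative algebra. Hence it remains only to show that the admissibility condition \eqref{eq:admissible} for $(A^*,\circ)$ is dual to the admissible coalgebra condition \eqref{eq:ad-asso-coalg} for $(A,\Delta)$. As in that proof I write $\Delta^*(a^*\otimes b^*)=a^*\circ b^*$ and use Sweedler's notation $\Delta(x)=x_1\otimes x_2$. The starting point is the admissibility identity \eqref{eq:admissible} applied to $(A^*,\circ)$ via the substitution $x\mapsto a^*,\ z\mapsto c^*,\ y\mapsto b^*$, namely
$$(a^*\circ c^*)\circ b^*-a^*\circ(c^*\circ b^*)=b^*\circ(c^*\circ a^*)-(b^*\circ c^*)\circ a^*,$$
which I pair with an arbitrary $x\in A$ and translate term by term.

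The translation rests on the two elementary identities $\langle\Delta^*(\Delta^*\otimes id)(N),x\rangle=\langle N,(\Delta\otimes id)\Delta(x)\rangle$ and $\langle\Delta^*(id\otimes\Delta^*)(N),x\rangle=\langle N,(id\otimes\Delta)\Delta(x)\rangle$ for $N\in A^*\otimes A^*\otimes A^*$, obtained by iterating \eqref{eq:co-mul}. Applying these with the ordering of $a^*,b^*,c^*$ dictated by each term and then pulling the reordering across the pairing, I expect to obtain
\begin{align*}
\langle(a^*\circ c^*)\circ b^*,x\rangle&=\langle a^*\otimes b^*\otimes c^*,(id\otimes\tau)(\Delta\otimes id)\Delta(x)\rangle,\\
\langle a^*\circ(c^*\circ b^*),x\rangle&=\langle a^*\otimes b^*\otimes c^*,(id\otimes\tau)(id\otimes\Delta)\Delta(x)\rangle,\\
\langle b^*\circ(c^*\circ a^*),x\rangle&=\langle a^*\otimes b^*\otimes c^*,\xi^2(id\otimes\Delta)\Delta(x)\rangle,\\
\langle(b^*\circ c^*)\circ a^*,x\rangle&=\langle a^*\otimes b^*\otimes c^*,\xi^2(\Delta\otimes id)\Delta(x)\rangle.
\end{align*}
Here the reorderings are $a^*\otimes c^*\otimes b^*=(id\otimes\tau)(a^*\otimes b^*\otimes c^*)$ for the first two lines and $b^*\otimes c^*\otimes a^*=\xi(a^*\otimes b^*\otimes c^*)$ for the last two, using that $\tau$ is an involution and $\xi^{-1}=\xi^2$.

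Assembling the admissibility identity and invoking nondegeneracy of the pairing then yields the operator equation $(id\otimes\tau)(\Delta\otimes id)\Delta-(id\otimes\tau)(id\otimes\Delta)\Delta-\xi^2(id\otimes\Delta)\Delta+\xi^2(\Delta\otimes id)\Delta=0$. Three of these four terms coincide verbatim with the corresponding terms of \eqref{eq:ad-asso-coalg}, so the remaining and, to my mind, only delicate step is to identify the first term: one must check the permutation identity $\xi(\tau\otimes id)=id\otimes\tau$ on $A\otimes A\otimes A$. This is immediate on a general element, since both sides send $v_1\otimes v_2\otimes v_3$ to $v_1\otimes v_3\otimes v_2$. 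With this identity the derived operator equation is precisely \eqref{eq:ad-asso-coalg}, and running the computation in reverse gives the converse, so admissibility of $(A^*,\circ)$ is equivalent to admissibility of $(A,\Delta)$.
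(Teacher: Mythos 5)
Your proof is correct and follows essentially the same route as the paper: dualize each term of the admissibility identity for $(A^*,\circ)$ through the pairing, reorder the tensor factors, and match the result against \eqref{eq:ad-asso-coalg}; your four displayed identities agree with the paper's (which writes the first operator as $\xi(\tau\otimes id)$ rather than the equal map $id\otimes\tau$), and your explicit appeal to the preceding proposition for the underlying (co)algebra equivalence is a point the paper leaves implicit.
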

\begin{proof}
For all $x \in A, a^*,b^*,c^* \in A^*$, we have
\begin{eqnarray*}
  \langle(b^* \circ c^*) \circ a^* - b^* \circ (c^* \circ a^*), x \rangle
  &=& \langle \big(\Delta^*(\Delta^*\otimes id)-\Delta^*(id \otimes \Delta^*) \big) (b^*\otimes c^*\otimes a^*), x \rangle  \\
  &\overset{\eqref{eq:co-mul}}{=}& \langle a^*\otimes b^*\otimes c^*, \xi^2(\Delta\otimes id -id \otimes \Delta) \Delta(x)\rangle,  \\
  \langle (a^* \circ c^*) \circ b^*, x \rangle
  &=& \langle \big(\Delta^*(\Delta^*\otimes id)\big) (a^*\otimes c^*\otimes b^*), x \rangle  \\
  &=& \langle \big(\Delta^*(\Delta^*\otimes id)\big)(\tau \otimes id) (c^*\otimes a^*\otimes b^*), x \rangle  \\
  &\overset{\eqref{eq:co-mul}}{=}& \langle a^*\otimes b^*\otimes c^*, \xi(\tau \otimes id)(\Delta\otimes id) \Delta(x)\rangle, \\
  \langle a^* \circ (c^* \circ b^*), x \rangle 
  &=& \langle \big(\Delta^*(id \otimes \Delta^*)\big) (a^*\otimes c^*\otimes b^*), x \rangle  \\
  &=& \langle \big(\Delta^*(id \otimes \Delta^*)\big)(id \otimes \tau) (a^*\otimes b^*\otimes c^*), x \rangle  \\
  &\overset{\eqref{eq:co-mul}}{=}& \langle a^*\otimes b^*\otimes c^*, (id \otimes \tau)(id \otimes \Delta) \Delta(x)\rangle,
\end{eqnarray*}
then \eqref{eq:admissible} holds if and only if \eqref{eq:ad-asso-coalg} holds. Thus, the conclusion follows.
\end{proof}

\begin{lemma} \label{mp-bialg}
Let $(A,\cdot)$ and $(A^*,\circ)$ be admissible $A_3$-associative algebras, and let $(R_\cdot^*,L_\cdot^*,A)$ and $(R_\circ^*,L_\circ^*,A^*)$ be the coadjoint representations of $(A,\cdot)$ and $(A^*,\circ)$, respectively.
Let $\Delta: A \rightarrow A \otimes A$ be a linear map satisfying \eqref{eq:co-mul}.
Then $\big((A,\cdot),(A^*,\circ), R^*_\cdot, L^*_\cdot, R^*_\circ, L^*_\circ \big)$ is a matched pair of $A_3$-associative algebra if and only if $(A,\cdot,\Delta)$ is an $A_3$-associative bialgebra.
\end{lemma}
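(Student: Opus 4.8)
The plan is to reduce both conditions, via Proposition~\ref{matched pair}, to a pair of tensor identities and then match them by dualization. First I would observe that the two \emph{representation} hypotheses in Proposition~\ref{matched pair} are automatic here: since $(A,\cdot)$ is admissible, its adjoint representation $(L_\cdot,R_\cdot,A)$ satisfies \eqref{eq:ad-repre} (which for the adjoint representation is literally \eqref{eq:admissible}), so Proposition~\ref{dual-ad-repre} shows $(R^*_\cdot,L^*_\cdot,A^*)$ is an admissible representation of $(A,\cdot)$, and symmetrically $(R^*_\circ,L^*_\circ,A)$ is a representation of $(A^*,\circ)$ because $(A^*,\circ)$ is admissible. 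Hence $\big((A,\cdot),(A^*,\circ),R^*_\cdot,L^*_\cdot,R^*_\circ,L^*_\circ\big)$ is a matched pair if and only if the compatibility conditions \eqref{eq:mp1} and \eqref{eq:mp2} hold. On the other side, $(A,\cdot)$ is an $A_3$-associative algebra by hypothesis, and since $\Delta$ is dual to the $A_3$-associative product $\circ$ through \eqref{eq:co-mul}, the earlier coalgebra-duality proposition makes $(A,\Delta)$ an $A_3$-associative coalgebra automatically; thus $(A,\cdot,\Delta)$ is a bialgebra if and only if \eqref{eq:asso-bialg} and \eqref{eq:asso-bialg2} hold. It therefore suffices to establish the equivalences \eqref{eq:mp1} $\Leftrightarrow$ \eqref{eq:asso-bialg} and \eqref{eq:mp2} $\Leftrightarrow$ \eqref{eq:asso-bialg2}.

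The core step is dualization through two dictionaries: the transposition relations \eqref{eq:ad-dual}, which turn $L^*_\cdot,R^*_\cdot$ (and their $\circ$-analogues $L^*_\circ,R^*_\circ$) into $L,R$ acting inside the pairing, and \eqref{eq:co-mul}, which converts a product $a^*\circ b^*$ evaluated on an element of $A$ into a pairing of $a^*\otimes b^*$ against $\Delta$. Concretely, both sides of \eqref{eq:mp1} are elements of $A$ depending on $x,y\in A$ and $a^*\in A^*$, and I would pair the whole identity against an arbitrary $b^*\in A^*$. Writing $\Delta$ in Sweedler notation, the left-hand side $(L^*_\circ-R^*_\circ)(a^*)(x\cdot y)$ pairs to $\langle a^*\otimes b^*,(id^{\otimes2}-\tau)\Delta(x\cdot y)\rangle$, which is exactly the $\Delta(x\cdot y)$ contribution of \eqref{eq:asso-bialg} up to overall sign. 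Dualizing the remaining six summands on the right-hand side of \eqref{eq:mp1} produces the operators $(R(y)\otimes id)\Delta(x)$, $(id\otimes L(x))\Delta(y)$, and the twisted terms $(id\otimes L(y)-R(y)\otimes id)\tau\Delta(x)$ and $(L(x)\otimes id-id\otimes R(x))\Delta(y)$ occurring in \eqref{eq:asso-bialg}; crucially, an individual summand of \eqref{eq:mp1} (for instance $(R^*_\circ(a^*)x)\cdot y$, or the ``mixed'' term $(L^*_\circ-R^*_\circ)(R^*_\cdot(y)a^*)x$) does not match a single summand of \eqref{eq:asso-bialg}, so the identification only appears after fully expanding and regrouping, whereupon the total equals $-\,$\eqref{eq:asso-bialg} paired with $a^*\otimes b^*$. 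Since $a^*,b^*$ are arbitrary and the pairing is nondegenerate, \eqref{eq:mp1} and \eqref{eq:asso-bialg} are equivalent.

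The same recipe handles \eqref{eq:mp2} $\Leftrightarrow$ \eqref{eq:asso-bialg2}, except that \eqref{eq:mp2} lives in $A^*$ and is paired against an arbitrary $z\in A$ playing the role of $y$: its left-hand side $(L^*_\cdot-R^*_\cdot)(x)(a^*\circ b^*)$ pairs to $\langle a^*\otimes b^*,\Delta(x\cdot z-z\cdot x)\rangle$, matching the $\Delta(x\cdot y-y\cdot x)$ term of \eqref{eq:asso-bialg2}, while the remaining summands reproduce the operators $(\tau+id^{\otimes2})(id\otimes L(y)-R(y)\otimes id)\Delta(x)$ and $(id\otimes L(x)-R(x)\otimes id)(\Delta(y)-\tau\Delta(y))$. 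I expect the only real obstacle to be the bookkeeping of this transposition: one must keep precise track of which tensor slot each of $L,R$ occupies after applying \eqref{eq:ad-dual}, of the $\tau$-twists generated by the differences $L^*_\circ-R^*_\circ$ and $L^*_\cdot-R^*_\cdot$, and of the way each matched-pair summand is distributed across several summands of the bialgebra axiom. Because this redistribution blocks a naive term-by-term comparison, the safest route is to expand everything in Sweedler notation, collect the coefficients of the independent products $\langle a^*,\cdot\rangle\langle b^*,\cdot\rangle$, and verify that the two sums agree; the structural identities (the coalgebra and admissibility conditions) have already been recorded in the preceding propositions and may simply be invoked.
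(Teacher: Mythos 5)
Your proposal is correct and follows essentially the same route as the paper: reduce to the compatibility conditions \eqref{eq:mp1}--\eqref{eq:mp2} (the representation and coalgebra hypotheses being automatic from admissibility and the duality of $\Delta$ with $\circ$), then dualize each summand via \eqref{eq:ad-dual} and \eqref{eq:co-mul} against $a^*\otimes b^*$ (resp.\ against $y\in A$) and regroup to recover \eqref{eq:asso-bialg} and \eqref{eq:asso-bialg2}. The only difference is that you make explicit the preliminary reductions that the paper leaves implicit; the core computation is identical.
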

\begin{proof}
For all $x,y \in A, a^*,b^* \in A^*$, by \eqref{eq:ad-dual} and \eqref{eq:co-mul}, we have
\begin{eqnarray*}
  \langle (R^*_\circ-L^*_\circ)(a^*)(x \cdot y),b^* \rangle
  &=& \langle x \cdot y, b^* \circ a^* -a^* \circ b^* \rangle  \\
  &=& \langle(\tau-id^{\otimes2})\Delta(x \cdot y), a^* \otimes b^* \rangle, \\
  \langle x \cdot (L_\circ^*(a^*)y),b^* \rangle
  &=& \langle L^*_\circ(a^*)y, L^*_\cdot(x)b^* \rangle \\
  &=& \langle y,a^* \circ (L^*_\cdot(x)b^*) \rangle \\
  &=& \langle (id \otimes L_\cdot(x)) \Delta(y), a^* \otimes b^* \rangle,\\
  -\langle (L^*_\circ(a^*)y) \cdot x,b^* \rangle
  &=& -\langle L^*_\circ(a^*)y, R^*_\cdot(x)b^* \rangle \\
  &=& -\langle y, a^* \circ (R^*_\cdot(x)b^*) \rangle \\
  &=& -\langle (id \otimes R_\cdot(x)) \Delta(y), a^* \otimes b^* \rangle,\\
  \langle y \cdot (R^*_\circ(a^*)x),b^* \rangle
  &=& \langle R^*_\circ(a^*)x, L^*_\cdot(y)b^* \rangle \\
  &=& \langle x, (L^*_\cdot(y)b^*) \circ a^* \rangle \\
  &=& \langle \tau(L_\cdot(y)\otimes id) \Delta(x), a^* \otimes b^* \rangle, \\
  -\langle (R^*_\circ(a^*)x) \cdot y,b^* \rangle
  &=& -\langle R^*_\circ(a^*)x, R^*_\cdot(y)b^* \rangle \\
  &=& -\langle x, (R^*_\cdot(y)b^*) \circ a^* \rangle \\
  &=& -\langle \tau(R_\cdot(y)\otimes id)\Delta(x), a^* \otimes b^* \rangle,\\
  \langle (L^*_\circ-R^*_\circ)(R^*_\cdot(y)a^*)x, b^* \rangle
  &=& \langle x, (R^*_\cdot(y)a^*) \circ b^* - b^* \circ (R^*_\cdot(y)a^*) \rangle\\
  &=& \langle (R_\cdot(y)\otimes id - \tau(id \otimes R_\cdot(y)))\Delta(x), a^* \otimes b^* \rangle, \\
  \langle (L^*_\circ-R^*_\circ)(L^*_\cdot(x)a^*)y, b^* \rangle
  &=& \langle y, (L^*_\cdot(x)a^*) \circ b^* - b^* \circ (L^*_\cdot(x)a^*) \rangle\\
  &=& \langle (L_\cdot(x)\otimes id - \tau(id \otimes L_\cdot(x)))\Delta(y), a^* \otimes b^* \rangle.
\end{eqnarray*}
Thus, by adding the above equations, \eqref{eq:asso-bialg} holds if and only if \eqref{eq:mp1} holds for $l_A=R^*_\cdot, r_A=L^*_\cdot, l_B=R^*_\circ$, and $r_B=L^*_\circ$.  Moreover, we have
\begin{eqnarray*}
  -\langle (L^*_\cdot-R^*_\cdot)(x)(a^* \circ b^*), y \rangle
  &=& -\langle a^* \circ b^*, x \cdot y -y \cdot x \rangle  \\
  &=& -\langle a^* \otimes b^*, \Delta(x \cdot y -y \cdot x) \rangle,  \\
  \langle a^* \circ (L_\cdot^*(x)b^*), y \rangle
  &=& \langle a^* \otimes b^*, (id \otimes L_\cdot(x)) \Delta(y) \rangle, \\
  -\langle (L_\cdot^*(x)b^*) \circ a^*, y \rangle
  &=& -\langle a^* \otimes b^*, \tau(L_\cdot(x)\otimes id)\Delta(y) \rangle, \\
  \langle b^* \circ (R_\cdot^*(x)a^*),y \rangle
  &=& \langle a^* \otimes b^*, \tau(id \otimes R_\cdot(x))\Delta(y) \rangle, \\
  -\langle (R_\cdot^*(x)a^*) \circ b^*,y \rangle
  &=& -\langle a^* \otimes b^*, (R_\cdot(x)\otimes id)\Delta(y) \rangle, \\
  \langle (L_\cdot^*-R_\cdot^*)(R_\circ^*(b^*)x)a^*, y \rangle
  &=& \langle a^*, (R_\circ^*(b^*)x) \cdot y - y \cdot (R_\circ^*(b^*)x) \rangle\\
  &=& \langle R_\cdot^*(y)a^* - L_\cdot^*(y)a^*, R_\circ^*(b^*)x \rangle \\
  &=& \langle (R_\cdot^*(y)a^*) \circ b^* - (L_\cdot^*(y)a^*) \circ b^*, x \rangle \\
  &=& \langle a^* \otimes b^*, (R_\cdot(y)\otimes id)\Delta(x) - (L_\cdot(y)\otimes id)\Delta(x) \rangle, \\
  \langle (L_\cdot^*-R_\cdot^*)(L_\circ^*(a^*)x)b^*, y \rangle
  &=& \langle b^*, (L_\circ^*(a^*)x) \cdot y - y \cdot (L_\circ^*(a^*)x) \rangle\\
  &=& \langle R_\cdot^*(y)b^* - L_\cdot^*(y)b^*, L_\circ^*(a^*)x \rangle \\
  &=& \langle a^* \circ (R_\cdot^*(y)b^*) - a^* \circ (L_\cdot^*(y)b^*), x \rangle \\
  &=& \langle a^* \otimes b^*, (id \otimes R_\cdot(y)) \Delta(x) - (id \otimes L_\cdot(y))\Delta(x) \rangle.
\end{eqnarray*}
Thus, by adding the above equations, \eqref{eq:asso-bialg2} holds if and only if \eqref{eq:mp2} holds for $l_A=R^*_\cdot, r_A=L^*_\cdot, l_B=R^*_\circ$, and $r_B=L^*_\circ$. 
Therefore, $((A,\cdot),(A^*,\circ), R^*_\cdot, L^*_\cdot, R^*_\circ, L^*_\circ)$ is a matched pair of $A_3$-associative algebra if and only if $(A,\cdot,\Delta)$ is an $A_3$-associative bialgebra.
\end{proof}

\begin{theorem} \label{equivalent}
Let $(A,\cdot)$ and $(A^*,\circ)$ be two admissible $A_3$-associative algebras, 
$\Delta: A \rightarrow A \otimes A$ be a linear map satisfying \eqref{eq:co-mul}. Then the following conditions are equivalent:
\begin{enumerate}[(1)]
  \item  There is a Manin triple of $A_3$-associative algebras $\big((A \oplus A^*,\cdot_\mathfrak{d},\mathcal{B}_\mathfrak{d}),A,A^* \big)$.
  \item $\big((A,\cdot),(A^*,\circ), R^*_\cdot,L^*_\cdot,R^*_\circ,L^*_\circ \big)$ is a matched pair of $A_3$-associative algebras.
  \item $(A,\cdot,\Delta)$ is an $A_3$-associative bialgebra.
\end{enumerate}
\end{theorem}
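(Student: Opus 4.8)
The plan is to obtain the theorem as a direct synthesis of the two preceding results, Proposition \ref{Mant-mp} and Lemma \ref{mp-bialg}. Both of these are biconditionals, and their standing hypotheses coincide exactly with those of the theorem: namely that $(A,\cdot)$ and $(A^*,\circ)$ are admissible $A_3$-associative algebras, that the relevant representations are the coadjoint representations $(R^*_\cdot,L^*_\cdot)$ and $(R^*_\circ,L^*_\circ)$, and that $\Delta$ is tied to the multiplication $\circ$ through \eqref{eq:co-mul}. Consequently no fresh computation is needed; the substance of the equivalence has already been established, and the theorem packages it into a single triple equivalence.

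First I would invoke Proposition \ref{Mant-mp} to record that condition $(1)$, the existence of the Manin triple $\big((A \oplus A^*,\cdot_\mathfrak{d},\mathcal{B}_\mathfrak{d}),A,A^*\big)$, holds if and only if condition $(2)$, that the data $\big((A,\cdot),(A^*,\circ),R^*_\cdot,L^*_\cdot,R^*_\circ,L^*_\circ\big)$ constitute a matched pair of $A_3$-associative algebras; this is precisely the biconditional proved there, so $(1)\Leftrightarrow(2)$ is immediate. Next I would invoke Lemma \ref{mp-bialg} to record that condition $(2)$ holds if and only if condition $(3)$, that $(A,\cdot,\Delta)$ is an $A_3$-associative bialgebra, which is exactly the content of that lemma; thus $(2)\Leftrightarrow(3)$. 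Chaining the two biconditionals by transitivity of logical equivalence yields $(1)\Leftrightarrow(2)\Leftrightarrow(3)$ and completes the argument.

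The only point demanding attention, and hence the nearest thing to an obstacle, is confirming that the hypotheses and the specific choice of representations in Proposition \ref{Mant-mp} and Lemma \ref{mp-bialg} are genuinely identical, so that the two equivalences compose without a logical gap. In each statement one sets $l_A=R^*_\cdot$, $r_A=L^*_\cdot$, $l_B=R^*_\circ$, $r_B=L^*_\circ$, assumes admissibility of both algebras, and encodes the compatibility of $\Delta$ with $\circ$ by \eqref{eq:co-mul}; once this alignment is verified, the conclusion follows at once.
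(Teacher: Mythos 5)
Your proposal is correct and matches the paper's own proof exactly: the paper likewise obtains $(1)\Leftrightarrow(2)$ from Proposition \ref{Mant-mp}, $(2)\Leftrightarrow(3)$ from Lemma \ref{mp-bialg}, and chains the two equivalences. Your additional check that the representations and hypotheses align across the two cited results is a sensible (if implicit in the paper) verification.
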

\begin{proof}
By Proposition \ref{Mant-mp}, we have $(1)\Leftrightarrow (2)$. By Lemma \ref{mp-bialg}, we have $(2)\Leftrightarrow (3)$. Hence, the conclusion holds.
\end{proof}

\begin{example}
Let $(A,\cdot)$ be the admissible $A_3$-associative algebra given in Example \ref{ad-asso-example}, $\Delta: A \rightarrow A \otimes A$ be a linear map on $A$ defined as
\begin{equation} \label{co-exam}
  \Delta(e_1)=\Delta(e_1+2e_2) = e_1 \otimes e_1,  \Delta(e_2)=0.
\end{equation}
Then $(A,\cdot,\Delta)$ is an $A_3$-associative bialgebra.
\end{example}
\begin{proof}
Firstly, by \eqref{eq:asso-coalg}, we have
\begin{eqnarray*}
  && (id^{\otimes3} +\xi +\xi^2)(\Delta \otimes id -id\otimes \Delta)\Delta(e_1) \\
  &=& (id^{\otimes3} +\xi +\xi^2) \big( (e_1 \otimes e_1) \otimes e_1 -e_1 \otimes (e_1 \otimes e_1) \big) \\
  &=& 0.
\end{eqnarray*}
Then $(A,\Delta)$ is an $A_3$-associative coalgebra. Moreover, 
\begin{eqnarray*}
  && \xi(\tau\otimes id)(\Delta \otimes id)\Delta (e_1) - (id\otimes \tau)(id \otimes \Delta)\Delta (e_1) + \xi^2 (\Delta \otimes id - id \otimes \Delta)\Delta(e_1) \\
  &=& \xi(\tau\otimes id)(\Delta (e_1) \otimes e_1) - (id\otimes \tau)\big(e_1 \otimes \Delta(e_1)\big) + \xi^2 \big(\Delta (e_1) \otimes e_1 - e_1 \otimes \Delta(e_1)\big) \\
  &=& (e_1 \otimes e_1 \otimes e_1) - (e_1 \otimes e_1 \otimes e_1) + (e_1 \otimes e_1 \otimes e_1) - (e_1 \otimes e_1 \otimes e_1).
\end{eqnarray*}
Then $(A,\Delta)$ is an admissible $A_3$-associative coalgebra.

Secondly, by \eqref{eq:asso-bialg} and \eqref{eq:asso-bialg2}, we have
\begin{eqnarray*}
  && (\tau-id^{\otimes2}) \big(\Delta(e_1 \cdot e_2) -(R(e_2)\otimes id)\Delta(e_1) -(id\otimes L(e_1))\Delta(e_2)\big) \\
  && +\big(id \otimes L(e_2)-R(e_2)\otimes id\big) \tau\Delta(e_1) +\big(L(e_1) \otimes id -id\otimes R(e_1)\big)\Delta(e_2) \\
  &=& (\tau-id^{\otimes2}) \big(e_1 \otimes e_1 -(R(e_2)\otimes id)(e_1 \otimes e_1)\big) + \big(id \otimes L(e_2)-R(e_2)\otimes id \big) (e_1 \otimes e_1) \\
  &=& (\tau-id^{\otimes2}) \big(e_1 \otimes e_1 -(e_1+2e_2) \otimes e_1 \big) + e_1 \otimes (e_1+2e_2) - (e_1+2e_2) \otimes e_1 \\
  &=& 0,
\end{eqnarray*}
and
\begin{eqnarray*}
  && \Delta(e_1 \cdot e_2 -e_2 \cdot e_1) +\big((\tau+id^{\otimes2})(id \otimes L(e_2)-R(e_2)\otimes id)\big) \Delta(e_1) \\
  && -\big(id \otimes L(e_1) -R(e_1)\otimes id \big)\big(\Delta(e_2)- \tau\Delta(e_2)\big) \\
  &=& \big(L(e_2) \otimes id + id \otimes L(e_2) -id \otimes R(e_2) - R(e_2)\otimes id \big) (e_1 \otimes e_1) \\
  &=& 0.
\end{eqnarray*}
Then $(A,\cdot,\Delta)$ is an $A_3$-associative bialgebra.
Therefore, the conclusion holds.
\end{proof}


\section{$A_3$-associative Yang-Baxter equation, triangular $A_3$-associative bialgebras and relative Rota-Baxter operators}

In this section, we introduce the $A_3$-associative Yang-Baxter equation, show that its skew-symmetric solution leads to triangular $A_3$-associative bialgebras, and use the relative Rota-Baxter operator to provide solutions.

\subsection{$A_3$-associative Yang-Baxter equation and triangular $A_3$-associative bialgebras.}

\begin{definition} \label{Y-Bequa}
Let $(A,\cdot)$ be an $A_3$-associative algebra and $r=\sum\limits_{i} (u_{i}\otimes v_{i}) \in A\otimes A$. Set
\begin{equation} \label{eq:asso-YB}
   AY(r)=\sum\limits_{i,j} (u_i \otimes u_j \otimes v_i \cdot v_j - u_i \otimes u_j\cdot v_i \otimes v_j + u_j \cdot u_i \otimes v_j \otimes v_i) \in A\otimes A\otimes A.
\end{equation}
The equation $AY(r)=0$ is called \textbf{$A_3$-associative Yang-Baxter equation} . If $AY(r)=0$, then $r$ is called a solution of the $A_3$-associative Yang-Baxter equation.
\end{definition}

\begin{remark}  
In \cite{Aguiar1}, the associative Yang-Baxter equation in an associative algebra is the same as \eqref{eq:asso-YB}.
\end{remark}

\begin{proposition} \label{co-co-AY}
Let $(A,\cdot)$ be an admissible $A_3$-associative algebra and $r=\sum\limits_{i} ( u_{i}\otimes v_{i}) \in A\otimes A$. Let $h:A \rightarrow End(A \otimes A)$ be a linear map defined as \eqref{eq:hx}, $\Delta_r:A \rightarrow A \otimes A$ be a linear map satisfying
\begin{equation} \label{eq:cobound}
  \Delta_r(x) = h(x)r = (id \otimes L(x) -R(x) \otimes id)r.
\end{equation}
Then \eqref{eq:asso-coalg} holds if and only if for all $x \in A$,
\begin{eqnarray}
  &\quad \quad \quad (id^{\otimes3} + \xi + \xi^2) \bigg(\big(R(x) \otimes id \otimes id -id \otimes id \otimes L(x) \big)AY(r) \notag \\
  &\quad \quad +\sum\limits_{j} \Big( \big(id \otimes R(u_j)L(x)- id \otimes L(x)R(u_j)\big) \big(r+\tau(r)\big) \otimes v_j \Big) \notag \\
  &\quad +\sum\limits_{i} \Big( u_i\otimes \big(R(x) L(v_i) \otimes id- L(v_i) R(x) \otimes id \big) \big(r+\tau(r)\big) \Big) \bigg) =0. \label{co-equi}
\end{eqnarray}
Furthermore, if $r$ is a skew-symmetric solution of the $A_3$-associative Yang-Baxter equation, then $(A,\Delta_r)$ is an $A_3$-associative coalgebra.
\end{proposition}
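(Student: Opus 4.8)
The plan is to prove the displayed ``if and only if'' as a genuine identity: I will substitute the coboundary formula $\Delta_r(x)=\sum_i\big(u_i\otimes x\cdot v_i-u_i\cdot x\otimes v_i\big)$ into the left-hand side of \eqref{eq:asso-coalg} and show that the result is \emph{equal} to the left-hand side of \eqref{co-equi} as an element of $A\otimes A\otimes A$ for every $x\in A$. Once this equality is established both directions of the equivalence are immediate, and the concluding assertion reduces to a one-line substitution.

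First I would expand $(\Delta_r\otimes id-id\otimes\Delta_r)\Delta_r(x)$. Feeding $\Delta_r(x)$ into $\Delta_r\otimes id$ and into $id\otimes\Delta_r$ and applying the coboundary formula a second time produces eight double-indexed summands in the letters $u_i,v_i,u_j,v_j$ and $x$. After relabelling the dummy indices $i\leftrightarrow j$ where appropriate, the bulk of these summands reassembles into $\big(R(x)\otimes id\otimes id-id\otimes id\otimes L(x)\big)AY(r)$, with $AY(r)$ as in \eqref{eq:asso-YB}; this is the first line of \eqref{co-equi}. What remains are three ``associator corrections'', arising from the two positions in which the placement of $x$ inside a triple product fails to match $AY(r)$, each of the schematic shape $\big[(a\cdot x)\cdot b-a\cdot(x\cdot b)\big]$ or $\big[(a\cdot b)\cdot x-a\cdot(b\cdot x)\big]$ sitting in one tensor slot.

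The role of admissibility enters here. In operator form, \eqref{eq:admissible} reads $R(y)L(x)-L(x)R(y)=L(y)R(x)-R(x)L(y)$, and this is exactly the identity that rewrites the associator corrections in terms of the operators $R(u_j)L(x)-L(x)R(u_j)$ and $R(x)L(v_i)-L(v_i)R(x)$ appearing in the second and third lines of \eqref{co-equi}. Applying the cyclic symmetrizer $(id^{\otimes3}+\xi+\xi^2)$ then identifies cyclic rotations of these corrections; it is this identification, together with the index relabelling, that supplies the transposed copy and hence converts the single factor $r$ into $r+\tau(r)$. Matching the symmetrized corrections slot by slot with the second and third lines of \eqref{co-equi} closes the identity and proves the equivalence.

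For the final statement, if $r$ is skew-symmetric then $\tau(r)=-r$, so $r+\tau(r)=0$ and the second and third lines of \eqref{co-equi} vanish identically; if moreover $AY(r)=0$, the first line vanishes as well. Hence \eqref{co-equi} holds, and by the equivalence just proved $\Delta_r$ satisfies \eqref{eq:asso-coalg}, so $(A,\Delta_r)$ is an $A_3$-associative coalgebra. I expect the principal difficulty to be purely organizational: carrying the eight expanded terms through the three cyclic shifts and applying admissibility with the correct index relabelling, so that the corrections collapse \emph{exactly} into the symmetric $r+\tau(r)$ form with no unmatched remainder.
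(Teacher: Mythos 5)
Your proposal is correct and follows essentially the same route as the paper: expand $(\Delta_r\otimes id - id\otimes \Delta_r)\Delta_r(x)$ into the eight double-indexed summands, reassemble the bulk into $\big(R(x)\otimes id\otimes id - id\otimes id\otimes L(x)\big)AY(r)$ using the $A_3$-law together with admissibility \eqref{eq:admissible}, complete $r$ to $r+\tau(r)$ at the cost of extra terms that cancel under the cyclic symmetrizer $(id^{\otimes 3}+\xi+\xi^2)$, and then read off the final claim from $\tau(r)=-r$ and $AY(r)=0$. The only work left is the bookkeeping you already anticipate, which is exactly what the paper's proof carries out.
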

\begin{proof}
For all $x \in A$, we have
\begin{eqnarray*}
  && (\Delta_r \otimes id - id \otimes \Delta_r)\Delta_r(x) \\
  &=& (\Delta_r \otimes id - id \otimes \Delta_r) \big(\sum\limits_{i} (u_i \otimes x \cdot v_i - u_i \cdot x \otimes v_i) \big)  \\
  &=& \sum\limits_{i} \big( \Delta_r(u_i)\otimes x \cdot v_i - u_i\otimes \Delta_r(x \cdot v_i) -\Delta_r(u_i \cdot x) \otimes v_i +u_i \cdot x \otimes \Delta_r(v_i) \big) \\
  &=& \sum\limits_{i,j} \big( 
  \underbrace{u_j \otimes u_i \cdot v_j \otimes x\cdot v_i}_{(a1)} -\underbrace{u_j \cdot u_i \otimes v_j \otimes x\cdot v_i}_{(a2)} 
  -\underbrace{u_i \otimes u_j \otimes (x\cdot v_i) \cdot v_j}_{(a3)} +\underbrace{u_i \otimes u_j \cdot (x\cdot v_i) \otimes v_j}_{(a4)}  \\
  && -\underbrace{u_j \otimes (u_i \cdot x) \cdot v_j \otimes v_i}_{(a5)} +\underbrace{u_j\cdot(u_i \cdot x) \otimes v_j \otimes v_i}_{(a6)}
  +\underbrace{u_i \cdot x \otimes u_j \otimes v_i \cdot v_j}_{(a7)} -\underbrace{u_i \cdot x \otimes u_j\cdot v_i \otimes v_j \big)}_{(a8)}.
\end{eqnarray*}
Divide the above into three parts, and label them as follows:
\begin{eqnarray*}
  (Eq1) := \sum\limits_{i,j} \big( (a1) -(a2) +(a7) -(a8) \big),~
  (Eq2) := \sum\limits_{i,j} \big( (a4) -(a5) \big),~
  (Eq3) := \sum\limits_{i,j} \big( -(a3) +(a6) \big).
\end{eqnarray*}
Then
\begin{eqnarray*}
  (Eq1)
  &=& \sum\limits_{i,j} \big( (R(x) \otimes id \otimes id)(u_i \otimes u_j \otimes v_i \cdot v_j - u_i \otimes u_j\cdot v_i \otimes v_j) \\
  &&-(id \otimes id \otimes L(x))(u_j \cdot u_i \otimes v_j \otimes v_i - u_j \otimes u_i \cdot v_j \otimes v_i) \big), \\
  (Eq2)
  &\overset{\eqref{eq:asso-alg}}{=}& \sum\limits_{i,j} \big( u_i \otimes (x\cdot v_i) \cdot u_j \otimes v_j - u_i \otimes x \cdot (v_i\cdot u_j) \otimes v_j + u_i \otimes (v_i\cdot u_j) \cdot x \otimes v_j \\
  && -u_i \otimes v_i \cdot (u_j\cdot x) \otimes v_j \big) \\
  &=& \sum\limits_{j} \Big(\big( id \otimes R(u_j)L(x)- id \otimes L(x)R(u_j) \big) \big(r+\tau(r)\big) \otimes v_j \Big) \\
  && +\sum\limits_{i} \Big(u_i\otimes \big(R(x)L(v_i) \otimes id- L(v_i)R(x) \otimes id \big)\big(r+\tau(r)\big)\Big) \\
  && +\sum\limits_{i,j} \Big( \underbrace{v_i \otimes x\cdot (u_i \cdot u_j) \otimes v_j}_{(b1)} -\underbrace{ v_i \otimes (x\cdot u_i) \cdot u_j \otimes v_j}_{(b2)} -\underbrace{ u_i \otimes (v_i\cdot v_j) \cdot x \otimes u_j}_{(b3)}  \\
  && +\underbrace{ u_i \otimes v_i \cdot (v_j \cdot x) \otimes u_j}_{(b4)} \Big), \\  
  (Eq3)
  &\overset{\eqref{eq:admissible}}{=}& \sum\limits_{i,j} \Big( (x \cdot u_i) \cdot u_j \otimes v_j \otimes v_i - x \cdot (u_i \cdot u_j) \otimes v_j \otimes v_i + (u_j \cdot u_i) \cdot x \otimes v_j \otimes v_i \\
  &&- u_i \otimes u_j \otimes v_j \cdot (v_i \cdot x) +u_i\otimes u_j \otimes (v_j \cdot v_i) \cdot x -u_i\otimes u_j\otimes x\cdot (v_i \cdot v_j) \Big) \\
  &=& \sum\limits_{i,j} \Big( (R(x) \otimes id \otimes id)(u_j \cdot u_i \otimes v_j \otimes v_i) -(id \otimes id \otimes L(x))(u_i \otimes u_j \otimes v_i \cdot v_j)  \\
  && + \underbrace{(x \cdot u_i) \cdot u_j \otimes v_j \otimes v_i}_{(b5)} -\underbrace{ x \cdot (u_i \cdot u_j) \otimes v_j \otimes v_i}_{(b6)} -\underbrace{ u_j \otimes u_i \otimes v_i \cdot (v_j \cdot x)}_{(b7)} \\
  && +\underbrace{ u_j \otimes u_i \otimes (v_i \cdot v_j) \cdot x}_{(b8)} \Big).
\end{eqnarray*}
Label
\begin{eqnarray*}
  (Eq4) := \sum\limits_{i,j} \big( (b1) -(b2) -(b3) +(b4) + (b5) -(b6) -(b7) +(b8) \big).
 \end{eqnarray*} 
By the action of $\xi$, we have 
\begin{eqnarray*}
  && (id^{\otimes3} + \xi + \xi^2) \big(\sum\limits_{i,j} (Eq4) \big) \\
  &=& (id^{\otimes3} + \xi + \xi^2) \Big(\sum\limits_{i,j} \big( v_i \otimes x\cdot (u_i \cdot u_j) \otimes v_j -v_i \otimes (x\cdot u_i) \cdot u_j \otimes v_j -u_i \otimes (v_i\cdot v_j) \cdot x \otimes u_j \\
  && +u_i \otimes v_i \cdot (v_j \cdot x) \otimes u_j + v_i \otimes (x \cdot u_i) \cdot u_j \otimes v_j -v_i \otimes x \cdot (u_i \cdot u_j) \otimes v_j -u_i \otimes v_i \cdot (v_j \cdot x) \otimes u_j \\
  && +u_i \otimes (v_i \cdot v_j) \cdot x \otimes u_j \big) \Big) \\
  &=& 0. 
\end{eqnarray*}
Hence,
\begin{eqnarray*}
  &&(id^{\otimes3} + \xi + \xi^2)(\Delta_r \otimes id - id \otimes \Delta_r)\Delta_r(x) \\
  &=& (id^{\otimes3} + \xi + \xi^2) \bigg(\big(R(x) \otimes id \otimes id -id \otimes id \otimes L(x) \big)AY(r)  \\
  && +\sum\limits_{j} \Big( \big(id \otimes R(u_j)L(x)- id \otimes L(x)R(u_j)\big) \big(r+\tau(r)\big) \otimes v_j \Big) \\
  && +\sum\limits_{i} \Big( u_i\otimes \big(R(x) L(v_i) \otimes id- L(v_i)R(x) \otimes id \big)\big(r+\tau(r)\big) \Big) \bigg).
\end{eqnarray*}
Thus, \eqref{eq:asso-coalg} holds if and only if \eqref{co-equi} holds.
Furthermore, if $r$ is a skew-symmetric solution of the $A_3$-associative Yang-Baxter equation, then we have $r+\tau(r)=0$ and $AY(r)=0$. Hence, \eqref{co-equi} holds. Therefore, $(A,\Delta_r)$ is an $A_3$-associative coalgebra.
\end{proof}

\begin{theorem} \label{YB-bialg}
Let $(A,\cdot)$ be an admissible $A_3$-associative algebra and $r=\sum\limits_{i} u_{i}\otimes v_{i}\in A\otimes A$. Let $\Delta_r:A \rightarrow A \otimes A$ be a linear map defined as \eqref{eq:cobound}. Then \eqref{eq:asso-bialg} and \eqref{eq:asso-bialg2} hold if and only if for all $x,y \in A$,
\begin{eqnarray} 
  && \big(id \otimes L(x)L(y) -id \otimes L(x \cdot y) +L(x \cdot y) \otimes id -L(x)L(y) \otimes id +L(x) \otimes L(y) \notag  \\
  &&\quad \quad \quad + R(y) \otimes R(x) -id \otimes R(x)L(y) - R(y)L(x) \otimes id \big) \big(r+\tau(r)\big)=0,  \label{eq:bia-iff1} \\
  && \big(id\otimes R(y\cdot x) -id\otimes R(x)R(y) +id\otimes L(y)L(x) -id\otimes L(y\cdot x) -id\otimes R(y)L(x)  \notag \\
  &&\quad \quad \quad \quad +L(y) \otimes L(x) +R(x) \otimes R(y) +L(x \cdot y) \otimes id -R(x \cdot y) \otimes id  \notag \\
  &&\quad \quad \quad -L(x)L(y) \otimes id +R(y)R(x) \otimes id -L(y)R(x) \otimes id\big) \big(r+\tau(r)\big)=0. \label{eq:bia-iff2}
\end{eqnarray}
Furthermore, if $r$ is a skew-symmetric solution of the $A_3$-associative Yang-Baxter equation,
then $(A,\cdot,\Delta_r)$ is an $A_3$-associative bialgebra.
\end{theorem}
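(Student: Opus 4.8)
The plan is to prove both equivalences by substituting the coboundary coproduct $\Delta_r(x)=\sum_i(u_i\otimes x\cdot v_i-u_i\cdot x\otimes v_i)$ into the compatibility conditions \eqref{eq:asso-bialg} and \eqref{eq:asso-bialg2}, and then to read off the bialgebra conclusion from the resulting identities together with Proposition \ref{co-co-AY}. A structural remark organizes the whole computation in advance: each of \eqref{eq:asso-bialg} and \eqref{eq:asso-bialg2} is \emph{linear} in $\Delta$, while $\Delta_r$ is linear in $r$. Hence, unlike the coalgebra condition \eqref{eq:asso-coalg} (which is quadratic in $\Delta$ and therefore produces the cubic term $AY(r)$ in Proposition \ref{co-co-AY}), the present substitutions are linear in $r$; no $AY(r)$-type residue can survive, and the outcome must be an operator applied to a single copy of $r$. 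The computation will show that this copy appears symmetrized as $r+\tau(r)$, matching the shape of \eqref{eq:bia-iff1} and \eqref{eq:bia-iff2}.

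I would first treat the equivalence \eqref{eq:asso-bialg} $\Leftrightarrow$ \eqref{eq:bia-iff1}. The only delicate term is $\Delta_r(x\cdot y)=\sum_i\big(u_i\otimes(x\cdot y)\cdot v_i-u_i\cdot(x\cdot y)\otimes v_i\big)$: the triple products $(x\cdot y)\cdot v_i$ and $u_i\cdot(x\cdot y)$ are rewritten by means of the $A_3$-associative law \eqref{eq:asso-alg} and the admissibility identity \eqref{eq:admissible}, so that they become composites of $L(x),L(y),R(x),R(y)$ acting on a single tensor leg. The remaining $\Delta$-terms, namely $(R(y)\otimes id)\Delta_r(x)$, $(id\otimes L(x))\Delta_r(y)$, $\tau\Delta_r(x)$ and $\Delta_r(y)$, expand directly. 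The reindexing point, exactly as in the proof of Proposition \ref{co-co-AY}, is that sums of the form $\sum_i u_i\otimes(\cdots v_i)$ and their $\tau$-flipped counterparts $\sum_i v_i\otimes(\cdots u_i)$ assemble into operators applied to $r+\tau(r)$. After the cancellations the left-hand side of \eqref{eq:asso-bialg} collapses to the operator displayed in \eqref{eq:bia-iff1} acting on $r+\tau(r)$, and since $x,y$ are arbitrary the two conditions are equivalent. The same procedure, now starting from the antisymmetrized term $\Delta_r(x\cdot y-y\cdot x)$ and using the symmetrizers $(\tau+id^{\otimes2})$ and $(id^{\otimes2}-\tau)$, yields \eqref{eq:asso-bialg2} $\Leftrightarrow$ \eqref{eq:bia-iff2}.

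For the final assertion, suppose $r$ is a skew-symmetric solution of the $A_3$-associative Yang-Baxter equation. Skew-symmetry gives $r+\tau(r)=0$, so the left-hand sides of \eqref{eq:bia-iff1} and \eqref{eq:bia-iff2} vanish identically; by the two equivalences just established, \eqref{eq:asso-bialg} and \eqref{eq:asso-bialg2} hold. Moreover $AY(r)=0$, so Proposition \ref{co-co-AY} shows that $(A,\Delta_r)$ is an $A_3$-associative coalgebra. Thus $(A,\cdot)$ is an $A_3$-associative algebra, $(A,\Delta_r)$ is an $A_3$-associative coalgebra, and the compatibility conditions \eqref{eq:asso-bialg}, \eqref{eq:asso-bialg2} are satisfied, so by Definition \ref{bialgebra} the triple $(A,\cdot,\Delta_r)$ is an $A_3$-associative bialgebra.

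The main obstacle is purely the bookkeeping: expanding \eqref{eq:asso-bialg} and especially \eqref{eq:asso-bialg2} produces a large number of tensor monomials, and the crux is to arrange them so that every triple-product term is eliminated by \eqref{eq:asso-alg} and \eqref{eq:admissible} (consistent with the linearity remark above), while the surviving quadratic terms reassemble precisely as the operators in \eqref{eq:bia-iff1} and \eqref{eq:bia-iff2} applied to $r+\tau(r)$. Tracking which summation index occupies which tensor slot, and applying $\tau$ correctly at each step, is where errors are easiest to make.
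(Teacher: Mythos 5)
Your proposal is correct and follows essentially the same route as the paper: substitute $\Delta_r$ into \eqref{eq:asso-bialg} and \eqref{eq:asso-bialg2}, eliminate the triple products arising from $\Delta_r(x\cdot y)$ via \eqref{eq:asso-alg} and \eqref{eq:admissible}, regroup the surviving monomials into operators acting on $r+\tau(r)$, and deduce the final assertion from $r+\tau(r)=0$ together with Proposition \ref{co-co-AY}. Your structural observation that these compatibility conditions are linear in $\Delta$ (hence linear in $r$, so no $AY(r)$-type term can appear) correctly anticipates the shape of \eqref{eq:bia-iff1} and \eqref{eq:bia-iff2}, though note that $AY(r)$ is quadratic, not cubic, in $r$.
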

\begin{proof}
Firstly, 
for all $x,y \in A$, we have
\begin{eqnarray*}
  && (\tau-id^{\otimes2})\big(\Delta_r(x \cdot y) -(R(y)\otimes id)\Delta_r(x) -(id\otimes L(x))\Delta_r(y)\big) \\
  &=& (\tau-id^{\otimes2})\Big(\sum\limits_{i} \big( u_i \otimes (x \cdot y) \cdot v_i -u_i \cdot (x \cdot y) \otimes v_i - u_i\cdot y \otimes x\cdot v_i + (u_i \cdot x) \cdot y \otimes v_i \\
  && -u_i \otimes x\cdot (y \cdot v_i) +u_i \cdot y \otimes x \cdot v_i \big) \Big)  \\
  &=& \sum\limits_{i} \big(
  \underbrace{(x \cdot y) \cdot v_i \otimes u_i}_{(c1)} -\underbrace{u_i \otimes (x \cdot y) \cdot v_i}_{(c2)} -\underbrace{v_i \otimes u_i \cdot (x \cdot y)}_{(c3)} +\underbrace{u_i \cdot (x \cdot y) \otimes v_i}_{(c4)} -\underbrace{x\cdot v_i \otimes u_i\cdot y}_{(c5)}  \\
  && +\underbrace{u_i\cdot y \otimes x\cdot v_i}_{(c6)} +\underbrace{v_i\otimes (u_i \cdot x) \cdot y}_{(c7)} -\underbrace{(u_i \cdot x) \cdot y \otimes v_i}_{(c8)} -\underbrace{x\cdot (y \cdot v_i) \otimes u_i}_{(c9)} +\underbrace{u_i \otimes x\cdot (y \cdot v_i)}_{(c10)}  \\
  && +\underbrace{x \cdot v_i \otimes u_i \cdot y}_{(c11)} -\underbrace{u_i \cdot y \otimes x \cdot v_i}_{(c12)} \big),  \\
  && (id \otimes L(y)-R(y)\otimes id) \tau\Delta_r(x) \\
  &=& \sum\limits_{i} \big( (id \otimes L(y)-R(y)\otimes id)(x\cdot v_i \otimes u_i -v_i \otimes u_i \cdot x) \big) \\
  &=& \sum\limits_{i} \big( \underbrace{x\cdot v_i \otimes y\cdot u_i}_{(c13)} -\underbrace{(x\cdot v_i)\cdot y \otimes u_i}_{(c14)} -\underbrace{v_i \otimes y\cdot (u_i \cdot x)}_{(c15)} +\underbrace{v_i\cdot y \otimes u_i \cdot x}_{(c16)} \big), \\
  && (L(x) \otimes id -id\otimes R(x))\Delta_r(y) \\
  &=& \sum\limits_{i}\big( (L(x) \otimes id -id\otimes R(x))(u_i \otimes y\cdot v_i -u_i \cdot y\otimes v_i) \big)\\
  &=& \sum\limits_{i}\big( \underbrace{x\cdot u_i \otimes y\cdot v_i}_{(c17)} -\underbrace{u_i \otimes (y\cdot v_i)\cdot x}_{(c18)} -\underbrace{x\cdot (u_i \cdot y)\otimes v_i}_{(c19)} +\underbrace{u_i \cdot y\otimes v_i \cdot x}_{(c20)} \big).
\end{eqnarray*}
Divide the above into four parts, and label them as follows:
\begin{eqnarray*}
  (Eq5) &:=& \sum\limits_{i} \big( -(c2) -(c3) +(c7) +(c10) -(c15) -(c18) \big), \\
  (Eq6) &:=& \sum\limits_{i} \big( (c13) +(c16) +(c17) +(c20) \big), \\
  (Eq7) &:=& \sum\limits_{i} \big( (c1) +(c4) -(c8) -(c9) -(c14) -(c19) \big), \\ 
  (Eq8) &:=& \sum\limits_{i} \big( -(c5) +(c6) +(c11) -(c12) \big) =0. 
\end{eqnarray*}
Then
\begin{eqnarray*}
  (Eq5)
  &\overset{\eqref{eq:asso-alg}}{=}& \sum\limits_{i} \big(v_i \otimes x \cdot (y \cdot u_i) -v_i \otimes (x \cdot y) \cdot u_i - v_i \otimes (y \cdot u_i) \cdot x + u_i \otimes x \cdot (y \cdot v_i) -u_i \otimes (x \cdot y) \cdot v_i  \\
  && -u_i \otimes (y \cdot v_i) \cdot x \big) \\
  &=& \big(id \otimes L(x)L(y) -id \otimes L(x \cdot y) - id \otimes R(x)L(y) \big)\big(r+\tau(r)\big),\\
  (Eq6)
  &=& \big(L(x) \otimes L(y) + R(y) \otimes R(x)\big) \big(r+\tau(r)\big), \\
  (Eq7)
  &\overset{\eqref{eq:asso-alg}}{=}& \sum\limits_{i} \big( (x \cdot y) \cdot v_i \otimes u_i -x \cdot (y \cdot u_i) \otimes v_i - y \cdot (u_i \cdot x) \otimes v_i + (x \cdot y) \cdot u_i \otimes v_i +(y \cdot u_i) \cdot x \otimes v_i  \\
  && -x \cdot (y \cdot v_i) \otimes u_i - (x \cdot v_i) \cdot y \otimes u_i- x \cdot (u_i \cdot y) \otimes v_i \big) \\
  &\overset{\eqref{eq:admissible}}{=}& \sum\limits_{i} \Big( \big(L(x\cdot y) \otimes id -L(x)L(y)\otimes id\big) \big(r+\tau(r)\big) -(x \cdot u_i) \cdot y \otimes v_i +x \cdot (u_i \cdot y) \otimes v_i  \\ 
  && - (x \cdot v_i) \cdot y \otimes u_i- x \cdot (u_i \cdot y) \otimes v_i \Big) \\
  &=& \big( L(x\cdot y) \otimes id -L(x)L(y)\otimes id -R(y)L(x)\otimes id \big) \big(r+\tau(r)\big).
\end{eqnarray*}
Hence, \eqref{eq:bia-iff1} holds if and only if \eqref{eq:asso-bialg} holds.

Secondly, for all $x,y \in A$, 
\begin{eqnarray*}
  && \Delta_r(x \cdot y -y \cdot x) \\
  &=& \sum\limits_{i}\big( 
  \underbrace{u_i\otimes (x \cdot y -y \cdot x) \cdot v_i}_{(d1)} -\underbrace{u_i \cdot (x \cdot y -y \cdot x) \otimes v_i}_{(d2)} \big), \\
  && \big((\tau+id^{\otimes2})(id \otimes L(y)-R(y)\otimes id)\big) \Delta_r(x) \\
  &=& \sum\limits_{i}\big( (L(y)\otimes id +id \otimes L(y) -id\otimes R(y) -R(y)\otimes id)(u_i\otimes x \cdot v_i -u_i\cdot x \otimes v_i) \big) \\
  &=& \sum\limits_{i}\big( \underbrace{y\cdot u_i\otimes x \cdot v_i}_{(d3)} +\underbrace{u_i\otimes y\cdot(x \cdot v_i)}_{(d4)} -\underbrace{u_i\otimes (x \cdot v_i)\cdot y}_{(d5)} -\underbrace{u_i\cdot y\otimes x \cdot v_i}_{(d6)} -\underbrace{y\cdot(u_i\cdot x) \otimes v_i}_{(d7)}  \\
  && -\underbrace{u_i\cdot x \otimes y\cdot v_i}_{(d8)} +\underbrace{u_i\cdot x \otimes v_i\cdot y}_{(d9)} +\underbrace{(u_i\cdot x)\cdot y \otimes v_i}_{(d10)} \big), \\
  && (R(x)\otimes id -id \otimes L(x)) \big(\Delta_r(y)- \tau\Delta_r(y) \big) \\
  &=& \sum\limits_{i}\big( (R(x)\otimes id -id \otimes L(x))(u_i\otimes y \cdot v_i - u_i\cdot y \otimes v_i -y\cdot v_i \otimes u_i +v_i\otimes u_i\cdot y) \big) \\
  &=& \sum\limits_{i}\big( \underbrace{u_i\cdot x \otimes y \cdot v_i}_{(d11)} -\underbrace{u_i\otimes x\cdot(y \cdot v_i)}_{(d12)} -\underbrace{(u_i\cdot y)\cdot x \otimes v_i}_{(d13)} +\underbrace{u_i\cdot y \otimes x \cdot v_i}_{(d14)} -\underbrace{(y\cdot v_i)\cdot x \otimes u_i}_{(d15)} \\
  && +\underbrace{y\cdot v_i \otimes x\cdot u_i}_{(d16)} +\underbrace{v_i\cdot x\otimes u_i\cdot y}_{(d17)} -\underbrace{v_i\otimes x\cdot(u_i\cdot y)}_{(d18)}  \big).
\end{eqnarray*}
Divide the above into four parts, and label them as follows:
\begin{eqnarray*}
  (Eq9) &:=& \sum\limits_{i} \big( (d1) +(d4) -(d5) -(d12) -(d18) \big), \\
  (Eq10) &:=& \sum\limits_{i} \big( (d3) +(d9) +(d16) +(d17) \big), \\
  (Eq11) &:=& \sum\limits_{i} \big( -(d2) -(d7) +(d10) -(d13) -(d15) \big), \\
  (Eq12) &:=& \sum\limits_{i} \big( -(d6) -(d8) +(d11) +(d14) \big) =0. 
\end{eqnarray*}
Then
\begin{eqnarray*}
  (Eq9)
  &=& \sum\limits_{i} \big( u_i\otimes v_i\cdot (y \cdot x) -u_i\otimes (v_i\cdot y)\cdot x -u_i\otimes (y \cdot x)\cdot v_i +u_i\otimes y\cdot(x \cdot v_i) -u_i\otimes (x \cdot v_i)\cdot y  \\
  && +v_i\otimes u_i\cdot(y\cdot x) -v_i\otimes (y\cdot x)\cdot u_i -v_i\otimes (u_i\cdot y)\cdot x +v_i\otimes y \cdot(x\cdot u_i) -v_i\otimes (x\cdot u_i)\cdot y \big) \\
  &=& \big(id\otimes R(y \cdot x) -id\otimes L(y \cdot x) -id\otimes R(x)R(y) +id\otimes L(y)L(x) \\
  && -id\otimes R(y)L(x)\big)\big(r+\tau(r)\big), \\
  (Eq10)
  &=& \big(L(y) \otimes L(x) +R(x) \otimes R(y)\big) \big(r+\tau(r) \big), \\
  (Eq11)
  &=& \sum\limits_{i} \big( (x\cdot y) \cdot u_i \otimes v_i -x\cdot(y\cdot u_i) \otimes v_i -u_i \cdot (x \cdot y) \otimes v_i -y\cdot(u_i\cdot x) \otimes v_i +(u_i\cdot x)\cdot y \otimes v_i   \\
  && +(x\cdot y)\cdot v_i \otimes u_i -v_i\cdot(x\cdot y) \otimes u_i -x\cdot(y \cdot v_i) \otimes u_i +(v_i\cdot x)\cdot y \otimes u_i -y\cdot(v_i\cdot x) \otimes u_i \big) \\
  &=& \big(L(x \cdot y) \otimes id -R(x \cdot y) \otimes id -L(x)L(y) \otimes id +R(y)R(x) \otimes id  \\
  && -L(y)R(x) \otimes id\big) \big(r+\tau(r)\big).
\end{eqnarray*}
Hence, \eqref{eq:bia-iff2} holds if and only if \eqref{eq:asso-bialg2} holds.

Thirdly, if $r$ is a skew-symmetric solution of the $A_3$-associative Yang-Baxter equation,
then we have $r+\tau(r)=0$ and $AY(r)=0$. Hence, \eqref{eq:bia-iff1} and \eqref{eq:bia-iff2} hold. By Proposition \ref{co-co-AY}, $(A,\Delta_r)$ is an $A_3$-associative coalgebra. 
Therefore, $(A,\cdot,\Delta_r)$ is an $A_3$-associative bialgebra.
\end{proof}

\begin{definition} \label{triangular}
Let $(A,\cdot)$ be an admissible $A_3$-associative algebra, $\Delta_r:A \rightarrow A \otimes A$ be a linear map given by \eqref{eq:cobound}. The $A_3$-associative bialgebra $(A,\cdot,\Delta_r,r)$ is called \textbf{triangular} if $r \in A\otimes A$ is a skew-symmetric solution of the $A_3$-associative Yang-Baxter equation.
\end{definition}

\subsection{Relative Rota-Baxter operators of admissible $A_3$-associative algebras.}

\begin{definition} \label{R-Boperator}
Let $(A,\cdot)$ be an $A_3$-associative algebra with a representation $(l,r,V)$. A linear map $T:V\rightarrow A$ is called a \textbf{relative Rota-Baxter operator} of $(A,\cdot)$ associated to $(l,r,V)$ if 
\begin{equation} \label{eq:T-RBo}
   T(u) \cdot T(v) = T(l(Tu)v+r(Tv)u), \quad \forall~ u,v \in V.
\end{equation}
\end{definition}

Let $V$ and $A$ be vector spaces, $r \in V\otimes A$. Define a linear map $r^\sharp:V^*\rightarrow A$ as
\begin{equation} \label{eq:rsharp}
  \langle r^\sharp(u^*), a^* \rangle = \langle r, u^*\otimes a^* \rangle, \quad \forall~ u^* \in V^*, a^* \in A^*.
\end{equation}

Now we study the operator form of $r$ in a triangular $A_3$-associative bialgebra $(A,\cdot,\Delta_r)$.

\begin{proposition} \label{acircb}
Let $(A,\cdot)$ be an admissible $A_3$-associative algebra, $(R^*,L^*,A^*)$ be the coadjoint representation, and $r=\sum\limits_{i} u_{i}\otimes v_{i}\in A\otimes A$ be a skew-symmetric tensor. Let $\Delta_r:A\rightarrow A\otimes A$ be a linear map given by \eqref{eq:cobound} and $\circ:A^* \otimes A^* \rightarrow A^*$ be the linear dual of $\Delta_r$ satisfying \eqref{eq:co-mul}. Then
\begin{equation} \label{eq:acircb}
  a^* \circ b^*= R^*(r^\sharp(a^*))b^* + L^*(r^\sharp(b^*))a^*, 
\end{equation}
and 
\begin{equation} \label{eq:rshacircb}
  \langle r^\sharp(a^*)\cdot r^\sharp(b^*) -r^\sharp(a^*\circ b^*), c^* \rangle = \langle a^* \otimes b^* \otimes c^*, AY(r) \rangle, \quad \forall~a^*,b^*,c^* \in A^*.
\end{equation}
\end{proposition}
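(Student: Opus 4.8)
The plan is to verify both identities by testing them against arbitrary elements, reducing everything to the explicit form of the sharp map and the skew-symmetry $r+\tau(r)=0$. First I would record the concrete description of $r^\sharp$: from \eqref{eq:rsharp} and $r=\sum_i u_i\otimes v_i$ one reads off $r^\sharp(a^*)=\sum_i\langle a^*,u_i\rangle v_i$ for every $a^*\in A^*$. This single formula, together with the coboundary expression $\Delta_r(x)=\sum_i(u_i\otimes x\cdot v_i-u_i\cdot x\otimes v_i)$ coming from \eqref{eq:cobound}, drives both computations.

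For \eqref{eq:acircb}, fix $x\in A$ and expand $\langle a^*\circ b^*,x\rangle$ via \eqref{eq:co-mul} and the coboundary formula, obtaining $\sum_i\langle a^*,u_i\rangle\langle b^*,x\cdot v_i\rangle-\sum_i\langle a^*,u_i\cdot x\rangle\langle b^*,v_i\rangle$. On the other side I would expand $\langle R^*(r^\sharp(a^*))b^*+L^*(r^\sharp(b^*))a^*,x\rangle$ using the dual action \eqref{eq:ad-dual}, rewriting $R(z)x=x\cdot z$ and $L(w)x=w\cdot x$ and substituting the explicit $r^\sharp$; this gives $\sum_i\langle a^*,u_i\rangle\langle b^*,x\cdot v_i\rangle+\sum_j\langle b^*,u_j\rangle\langle a^*,v_j\cdot x\rangle$. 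The first summand matches verbatim, and the remaining left-hand term $-\sum_i\langle a^*,u_i\cdot x\rangle\langle b^*,v_i\rangle$ is converted into $\sum_j\langle b^*,u_j\rangle\langle a^*,v_j\cdot x\rangle$ by applying skew-symmetry (swapping $u_i\leftrightarrow v_i$ under the sum at the cost of a sign) and relabeling. Since $x\in A$ is arbitrary and the pairing between $A^*$ and $A$ is nondegenerate, \eqref{eq:acircb} follows.

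For \eqref{eq:rshacircb} I would pair both sides with an arbitrary $c^*\in A^*$. The first piece is immediate: $\langle r^\sharp(a^*)\cdot r^\sharp(b^*),c^*\rangle=\sum_{i,j}\langle a^*,u_i\rangle\langle b^*,u_j\rangle\langle c^*,v_i\cdot v_j\rangle=\langle a^*\otimes b^*\otimes c^*,\sum_{i,j}u_i\otimes u_j\otimes v_i\cdot v_j\rangle$, the first summand of $AY(r)$ in \eqref{eq:asso-YB}. For the second piece I would substitute \eqref{eq:acircb} into $r^\sharp(a^*\circ b^*)$, then use $\langle r^\sharp(e^*),c^*\rangle=\sum_k\langle e^*,u_k\rangle\langle c^*,v_k\rangle$ with \eqref{eq:ad-dual}; this produces two triple sums. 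One of them already equals $\langle a^*\otimes b^*\otimes c^*,\sum_{i,j}u_i\otimes u_j\cdot v_i\otimes v_j\rangle$, while the other, $\sum_{j,k}\langle b^*,u_j\rangle\langle a^*,v_j\cdot u_k\rangle\langle c^*,v_k\rangle$, becomes $-\langle a^*\otimes b^*\otimes c^*,\sum_{i,j}u_j\cdot u_i\otimes v_j\otimes v_i\rangle$ after a single skew-symmetry swap on the $j$-index pair and relabeling. Subtracting reproduces the two remaining summands of $AY(r)$ with exactly their correct signs, so summing the three contributions yields $\langle a^*\otimes b^*\otimes c^*,AY(r)\rangle$.

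The computations are routine once $r^\sharp$ is made explicit; the only delicate point—and the part I would double-check most carefully—is the bookkeeping of the skew-symmetry substitutions and index relabelings, since each of the three terms of $AY(r)$ must appear with precisely the right sign and a single misapplied swap of $u_i\leftrightarrow v_i$ would flip one. I anticipate no conceptual obstacle: admissibility of $(A,\cdot)$ and the coadjoint property of $(R^*,L^*,A^*)$ are needed only so that $\circ$ and the objects in \eqref{eq:acircb}–\eqref{eq:rshacircb} are well defined, whereas the algebraic manipulations rest solely on the skew-symmetry of $r$.
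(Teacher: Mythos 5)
Your proposal is correct and follows essentially the same direct-verification strategy as the paper: expand everything against arbitrary dual elements using the explicit form of $r^\sharp$ and $\Delta_r$, and invoke skew-symmetry of $r$ exactly where needed. The only cosmetic difference is that for \eqref{eq:rshacircb} the paper computes $\langle r^\sharp(a^*\circ b^*),c^*\rangle$ directly via $\langle\Delta_r(u_i),a^*\otimes b^*\rangle$ (needing no further skew-symmetry swap), whereas you substitute the already-established \eqref{eq:acircb} and pay for it with one extra swap; both routes land on the same three summands of $AY(r)$ with the correct signs.
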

\begin{proof}
Firstly, we prove \eqref{eq:acircb}.
For all $a^*,b^*,c^* \in A^*, x\in A$, we have
\begin{eqnarray*}
  && \langle a^* \circ b^*, x \rangle
  \overset{\eqref{eq:co-mul}}{=} \langle a^* \otimes b^*, \Delta_r(x) \rangle \\
  &\overset{\eqref{eq:cobound}}{=}& \langle a^* \otimes b^*, (id \otimes L(x) -R(x) \otimes id)r \rangle \\
  &\overset{\eqref{eq:ad-dual}}{=}& \langle r, a^* \otimes L^*(x)b^*-R^*(x)a^* \otimes b^* \rangle \\
  &=& \langle r, a^* \otimes L^*(x)b^* \rangle + \langle r, b^* \otimes R^*(x)a^* \rangle \\
  &\overset{\eqref{eq:rsharp}}{=}& \langle r^\sharp(a^*), L^*(x)b^* \rangle + \langle r^\sharp(b^*), R^*(x)a^* \rangle\\
  &\overset{\eqref{eq:ad-dual}}{=}& \langle x\cdot r^\sharp(a^*), b^* \rangle +\langle r^\sharp(b^*) \cdot x, a^* \rangle \\
  &\overset{\eqref{eq:ad-dual}}{=}& \langle x, R^*(r^\sharp(a^*))b^* +L^*(r^\sharp(b^*))a^*\rangle.
\end{eqnarray*}
Hence, \eqref{eq:acircb} holds. 

Secondly, we divide the left side of \eqref{eq:rshacircb} into the following two parts.
\begin{eqnarray*}
  && \langle r^\sharp(a^*\circ b^*), c^* \rangle
  \overset{\eqref{eq:rsharp}}{=} \sum_i \langle u_i \otimes v_i, (a^* \circ b^*)\otimes c^* \rangle \\
  &=& \sum_i \langle u_i, a^* \circ b^* \rangle \langle v_i, c^* \rangle
  \overset{\eqref{eq:co-mul}}{=} \sum_i \langle \Delta_r(u_i), a^* \otimes b^* \rangle \langle v_i, c^* \rangle \\
  &\overset{\eqref{eq:cobound}}{=}& \sum_{i,j} \langle u_j\otimes u_i \cdot v_j - u_j\cdot u_i \otimes v_j, a^* \otimes b^* \rangle \langle v_i, c^* \rangle \\
  &=& \sum_{i,j} \langle a^*\otimes b^*\otimes c^*, u_j\otimes u_i \cdot v_j\otimes v_i -u_j\cdot u_i \otimes v_j\otimes v_i \rangle, \\
  && \langle r^\sharp(a^*)\cdot r^\sharp(b^*), c^* \rangle
  \overset{\eqref{eq:co-mul}}{=} \langle r^\sharp(a^*), R_\cdot^*(r^\sharp(b^*)) c^* \rangle \\
  &\overset{\eqref{eq:rsharp}}{=}& \sum_i \langle u_i \otimes v_i, a^* \otimes R_\cdot^*(r^\sharp(b^*))c^* \rangle
  =\sum_i \langle u_i, a^* \rangle \langle v_i, R_\cdot^*(r^\sharp(b^*))c^* \rangle \\
  &=& \sum_i \langle u_i, a^* \rangle \langle r^\sharp(b^*), L_\cdot^*(v_i)c^* \rangle
  \overset{\eqref{eq:rsharp}}{=} \sum_{i,j} \langle u_i, a^* \rangle \langle u_j\otimes v_j, b^* \otimes L_\cdot^*(v_i)c^* \rangle \\
  &\overset{\eqref{eq:ad-dual}}{=}& \sum_{i,j} \langle u_i, a^* \rangle \langle u_j, b^* \rangle \langle v_i\cdot v_j, c^* \rangle
  = \sum_{i,j} \langle a^*\otimes b^*\otimes c^*, u_i \otimes u_j \otimes v_i\cdot v_j \rangle.
\end{eqnarray*}
By subtracting the above equations, we have
\begin{eqnarray*}
  && \langle r^\sharp(a^*)\cdot r^\sharp(b^*)-r^\sharp(a^*\circ b^*),c^* \rangle \\
  &=& \sum_{i,j} \langle a^*\otimes b^*\otimes c^*, u_i \otimes u_j \otimes v_i\cdot v_j -u_j\otimes u_i \cdot v_j\otimes v_i +u_j\cdot u_i \otimes v_j\otimes v_i \rangle \\
  &=& \langle a^*\otimes b^*\otimes c^*, AY(r) \rangle.
\end{eqnarray*}
Hence, the conclusion holds.
\end{proof}

\begin{proposition} \label{coadsharp}
Let $(A,\cdot)$ be an admissible $A_3$-associative algebra and $r=\sum\limits_{i} (u_{i}\otimes v_{i}) \in A\otimes A$ be a skew-symmetric tensor.
\begin{enumerate}[(1)]
  \item Let $r$ be a solution of the $A_3$-associative Yang-Baxter equation. Then we have $r^\sharp:A^*\rightarrow A$ is a homomorphism of $(A,\cdot)$.
  \item Let $\Delta_r:A\rightarrow A\otimes A$ be a linear map given by \eqref{eq:cobound} and $\circ:A^* \otimes A^* \rightarrow A^*$ be the linear dual of $\Delta_r$ satisfying \eqref{eq:co-mul}. Then the following statements are equivalent:
  \begin{enumerate}[(i)]
  \item r is a solution of the $A_3$-associative Yang-Baxter equation such that $(A,\cdot,\Delta_r)$ is a triangular $A_3$-associative bialgebra.
  \item $r^\sharp:A^*\rightarrow A$ is a relative Rota-Baxter operator of $(A,\cdot)$ associated to $(R^*,L^*,A^*)$, that is,
      \begin{equation} \label{eq:RBope}
        r^\sharp(a^*)\cdot r^\sharp(b^*) = r^\sharp(R^*(r^\sharp(a^*))b^* +L^*(r^\sharp(b^*))a^*), \quad \forall~ a^*,b^* \in A^*.
      \end{equation}
  \end{enumerate}
\end{enumerate}
\end{proposition}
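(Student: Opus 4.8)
The plan is to derive both parts as immediate consequences of Proposition \ref{acircb}, whose two identities \eqref{eq:acircb} and \eqref{eq:rshacircb} already package all the necessary computation. The only extra ingredient is the nondegeneracy of the canonical pairing $\langle\cdot,\cdot\rangle:A^*\times A\to\mathbb{K}$, which allows me to pass freely between tensor equations in $A^{\otimes 3}$ and operator equations in $A$.

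First I would dispose of Part (1). By \eqref{eq:rshacircb}, for all $a^*,b^*,c^*\in A^*$,
$$\langle r^\sharp(a^*)\cdot r^\sharp(b^*)-r^\sharp(a^*\circ b^*),c^*\rangle=\langle a^*\otimes b^*\otimes c^*,AY(r)\rangle.$$
If $r$ solves the $A_3$-associative Yang-Baxter equation then $AY(r)=0$, so the right-hand side vanishes identically; since $c^*$ is arbitrary and the pairing is nondegenerate, I conclude $r^\sharp(a^*)\cdot r^\sharp(b^*)=r^\sharp(a^*\circ b^*)$. As $\circ$ is the multiplication on $A^*$ dual to $\Delta_r$, this is precisely the assertion that $r^\sharp:(A^*,\circ)\to(A,\cdot)$ is a homomorphism.

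For Part (2) I would prove the two implications separately, in each case using \eqref{eq:acircb} to rewrite $a^*\circ b^*$. Assuming (i), the element $r$ is a solution, so Part (1) yields $r^\sharp(a^*)\cdot r^\sharp(b^*)=r^\sharp(a^*\circ b^*)$; substituting the formula $a^*\circ b^*=R^*(r^\sharp(a^*))b^*+L^*(r^\sharp(b^*))a^*$ from \eqref{eq:acircb} into the right-hand side produces exactly \eqref{eq:RBope}, so $r^\sharp$ is a relative Rota-Baxter operator, giving (ii). Conversely, assuming (ii), equation \eqref{eq:RBope} combined with \eqref{eq:acircb} reads $r^\sharp(a^*)\cdot r^\sharp(b^*)=r^\sharp(a^*\circ b^*)$; feeding this into \eqref{eq:rshacircb} forces $\langle a^*\otimes b^*\otimes c^*,AY(r)\rangle=0$ for all $a^*,b^*,c^*$, whence $AY(r)=0$ by nondegeneracy, so $r$ is a skew-symmetric solution. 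Triangularity then follows from the earlier theory: by Theorem \ref{YB-bialg} a skew-symmetric solution makes $(A,\cdot,\Delta_r)$ an $A_3$-associative bialgebra, and by Definition \ref{triangular} it is triangular, establishing (i).

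There is no genuine obstacle here: once Proposition \ref{acircb} is in hand the argument is a two-line translation in each direction. The one point requiring care is the logical bookkeeping, namely recognizing that \eqref{eq:acircb} identifies the argument $R^*(r^\sharp(a^*))b^*+L^*(r^\sharp(b^*))a^*$ of the relative Rota-Baxter condition with the product $a^*\circ b^*$ dual to $\Delta_r$, so that the homomorphism identity of Part (1) and the Rota-Baxter identity \eqref{eq:RBope} are literally the same equation. The nondegeneracy of the pairing is what renders both the tensor-to-operator and the operator-to-tensor passages reversible, which is exactly what the equivalence in Part (2) needs.
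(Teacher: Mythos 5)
Your proposal is correct and follows essentially the same route as the paper: Part (1) is exactly the paper's argument (apply \eqref{eq:rshacircb} with $AY(r)=0$ and use nondegeneracy of the pairing), and Part (2) is the intended unpacking of the paper's terse "it follows directly from Proposition \ref{acircb}," namely that \eqref{eq:acircb} identifies the Rota--Baxter identity \eqref{eq:RBope} with $r^\sharp(a^*)\cdot r^\sharp(b^*)=r^\sharp(a^*\circ b^*)$, which by \eqref{eq:rshacircb} is equivalent to $AY(r)=0$, with triangularity supplied by Theorem \ref{YB-bialg}. No gaps.
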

\begin{proof}
(1) By $AY(r)=0$ and \eqref{eq:rshacircb}, we have
\begin{equation}
  r^\sharp(a^*\circ b^*) = r^\sharp(a^*)\cdot r^\sharp(b^*), \quad \forall~ a^*,b^* \in A^*.
\end{equation}
Therefore, $r^\sharp:A^*\rightarrow A$ is a homomorphism of $(A,\cdot)$.

(2) It follows directly from Proposition \ref{acircb}.
\end{proof}

A Connes cocycle ({\rm\cite{BaiC2}}) on an associative algebra $(A,\cdot)$ is a skew-symmetric bilinear form $\omega(\cdot,\cdot):A \times A \rightarrow \mathbb{K}$ satisfying
\begin{equation} \label{Con-co}
  \omega(x\cdot y,z)+ \omega(y\cdot z,x) + \omega(z\cdot x,y)=0, \quad \forall~ x,y,z \in A.
\end{equation}

\begin{theorem} \label{omegasharp}
Let $(A,\cdot)$ be an admissible $A_3$-associative algebra, $r\in A\otimes A$ be a skew-symmetric tensor. If $r^\sharp:A^*\rightarrow A$ is invertible and $\omega$ is a skew-symmetric bilinear form on $(A,\cdot)$ defined by
\begin{equation} \label{eq:omega}
  \omega(x,y)=\langle (r^\sharp)^{-1}(x),y \rangle, \quad \forall~ x,y \in A,
\end{equation}
then $r$ is a solution of the $A_3$-associative Yang-Baxter equation if and only if $(r^\sharp)^{-1}$ is a Connes cocycle.
\end{theorem}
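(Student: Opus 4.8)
The plan is to exploit the invertibility of $r^\sharp$ to convert the tensor equation $AY(r)=0$ into a scalar identity tested against arbitrary covectors, and then to recognize the resulting expression as the Connes cocycle condition \eqref{Con-co} for $\omega$. First I would record two elementary consequences of the skew-symmetry of $r$: the skew-adjointness $\langle r^\sharp(\xi),\eta\rangle=-\langle\xi,r^\sharp(\eta)\rangle$ for all $\xi,\eta\in A^*$ (from $\tau(r)=-r$), and the fact that $\omega$ is skew-symmetric, together with the defining relation $\omega(r^\sharp(\xi),w)=\langle\xi,w\rangle$ for $w\in A$, which follows from \eqref{eq:omega} and $(r^\sharp)^{-1}r^\sharp=\mathrm{id}$. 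These let me translate every dual pairing that appears into a value of $\omega$.

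Next, fix $a^*,b^*,c^*\in A^*$ and set $x=r^\sharp(a^*)$, $y=r^\sharp(b^*)$, $z=r^\sharp(c^*)$. The heart of the argument is identity \eqref{eq:rshacircb} from Proposition \ref{acircb}, which I would read as $\langle AY(r),a^*\otimes b^*\otimes c^*\rangle=\langle r^\sharp(a^*)\cdot r^\sharp(b^*),c^*\rangle-\langle r^\sharp(a^*\circ b^*),c^*\rangle$. For the first term, $\langle x\cdot y,c^*\rangle=\omega(z,x\cdot y)=-\omega(x\cdot y,z)$ by the defining relation for $\omega$ and its skew-symmetry. For the second term I would use skew-adjointness of $r^\sharp$ to get $\langle r^\sharp(a^*\circ b^*),c^*\rangle=-\langle a^*\circ b^*,z\rangle$, then expand $a^*\circ b^*$ by \eqref{eq:acircb} and apply the coadjoint action \eqref{eq:ad-dual} to obtain $\langle a^*\circ b^*,z\rangle=\langle b^*,z\cdot x\rangle+\langle a^*,y\cdot z\rangle$; rewriting each pairing through $\omega$ then gives $\langle r^\sharp(a^*\circ b^*),c^*\rangle=\omega(z\cdot x,y)+\omega(y\cdot z,x)$.

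Combining the two terms yields the key scalar identity
\[
\langle AY(r),a^*\otimes b^*\otimes c^*\rangle=-\big(\omega(x\cdot y,z)+\omega(y\cdot z,x)+\omega(z\cdot x,y)\big),
\]
valid for all $a^*,b^*,c^*\in A^*$. The equivalence then follows formally. If $AY(r)=0$, the left side vanishes for all covectors, hence so does the right side; since $r^\sharp$ is surjective, $(x,y,z)$ ranges over all of $A\times A\times A$, so $\omega$ satisfies \eqref{Con-co}. Conversely, if $\omega$ is a Connes cocycle, then specializing $(x,y,z)=(r^\sharp(a^*),r^\sharp(b^*),r^\sharp(c^*))$ makes the right side vanish for all $a^*,b^*,c^*$, and because $A^*\otimes A^*\otimes A^*$ separates points of $A\otimes A\otimes A$ in finite dimension, $AY(r)=0$.

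I expect the main obstacle to be purely bookkeeping: keeping the skew-symmetry signs consistent when passing between the pairing $\langle\cdot,\cdot\rangle$ and the form $\omega$, especially in the second term where skew-adjointness of $r^\sharp$, the formula \eqref{eq:acircb} for $\circ$, and the coadjoint definition \eqref{eq:ad-dual} must be applied in the correct order. The one genuinely structural (rather than computational) point is the twofold appeal to invertibility of $r^\sharp$: surjectivity to let $(x,y,z)$ be arbitrary in the forward direction, and separation of points to recover the full tensor identity from its scalar pairings in the converse.
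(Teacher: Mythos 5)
Your proposal is correct and follows essentially the same route as the paper: both substitute $x=r^\sharp(a^*)$, $y=r^\sharp(b^*)$, $z=r^\sharp(c^*)$, identify the cyclic sum $\omega(x\cdot y,z)+\omega(y\cdot z,x)+\omega(z\cdot x,y)$ (up to sign) with $\langle r^\sharp(a^*)\cdot r^\sharp(b^*)-r^\sharp\big(R^*(r^\sharp(a^*))b^*+L^*(r^\sharp(b^*))a^*\big),c^*\rangle$, and conclude via the identity \eqref{eq:rshacircb} of Proposition \ref{acircb}. The only cosmetic difference is that the paper phrases the intermediate condition as ``$r^\sharp$ is a relative Rota--Baxter operator'' and invokes Proposition \ref{coadsharp}, whereas you pair against $AY(r)$ directly; your sign bookkeeping checks out.
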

\begin{proof}

Let $a^*,b^*,c^*\in A^*$ and $x=r^\sharp(a^*),y=r^\sharp(b^*),z=r^\sharp(c^*)$. Then we have
\begin{eqnarray*}
  && \langle (r^\sharp)^{-1}(x\cdot y),z \rangle + \langle (r^\sharp)^{-1}(y\cdot z),x \rangle + \langle (r^\sharp)^{-1}(z\cdot x),y \rangle  \\
  &\overset{\eqref{eq:omega}}{=}& \omega(x\cdot y,z) +\omega(y\cdot z,x) +  \omega(z\cdot x,y)  \\
  &\overset{\eqref{eq:omega}}{=}& -\langle (r^\sharp)^{-1}(z), x\cdot y -\langle a^*, y\cdot z \rangle -\langle b^*, z\cdot x \rangle \rangle \\
  &\overset{\eqref{eq:ad-dual}}{=}& - \langle (r^\sharp)^{-1}(z), x\cdot y \rangle -\langle L^*(y)a^* + R^*(x)b^*, z \rangle  \\
  &=& \langle x\cdot y, (r^\sharp)^{-1}(z) \rangle -\langle r^\sharp \big(L^*(y)a^* + R^*(x)b^* \big), (r^\sharp)^{-1}(z) \rangle \\
  &=& \langle r^\sharp(a^*)\cdot r^\sharp(b^*)- r^\sharp \big(L^*(r^\sharp(b^*))a^* + R^*(r^\sharp(a^*))b^* \big), c^* \rangle. 
\end{eqnarray*}
Hence, $(r^\sharp)^{-1}$ is a Connes cocycle if and only if $r^\sharp$ is a relative Rota-Baxter operator of $(A,\cdot)$ associated to $(R^*,L^*,A^*)$.
Then by Proposition \ref{coadsharp} (2), the conclusion holds.
\end{proof}

\begin{corollary}
Let $(A,\cdot)$ be an admissible $A_3$-associative algebra and $r\in A\otimes A$ be a skew-symmetric solution of the $A_3$-associative Yang-Baxter equation in $(A,\cdot)$. If the admissible $A_3$-associative algebra structure $\circ$ on $A^*$ is given by \eqref{eq:acircb},  then $r^\sharp:A^*\rightarrow A$ is a homomorphism of $(A,\cdot)$. 
\end{corollary}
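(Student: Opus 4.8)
The plan is to read this statement off as a direct consequence of Proposition~\ref{acircb} together with Proposition~\ref{coadsharp}(1). First I would observe that the hypothesis fixes the multiplication $\circ$ on $A^*$ to be exactly the one appearing in \eqref{eq:acircb}, namely $a^* \circ b^* = R^*(r^\sharp(a^*))b^* + L^*(r^\sharp(b^*))a^*$. By Proposition~\ref{acircb} this is precisely the linear dual of $\Delta_r$ under the pairing \eqref{eq:co-mul}, so the two descriptions of $\circ$ agree and the identity \eqref{eq:rshacircb} is available for this $\circ$.

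Next I would invoke \eqref{eq:rshacircb}, which asserts
\[
  \langle r^\sharp(a^*)\cdot r^\sharp(b^*) - r^\sharp(a^* \circ b^*),\, c^* \rangle = \langle a^* \otimes b^* \otimes c^*,\, AY(r) \rangle, \quad \forall~ a^*,b^*,c^* \in A^*.
\]
Since $r$ is a skew-symmetric solution of the $A_3$-associative Yang-Baxter equation, we have $AY(r)=0$, so the right-hand side vanishes for every $c^*$.

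Finally, because $A$ is finite-dimensional the natural pairing between $A$ and $A^*$ is nondegenerate, so the vanishing of $\langle r^\sharp(a^*)\cdot r^\sharp(b^*) - r^\sharp(a^* \circ b^*),\, c^*\rangle$ for all $c^* \in A^*$ forces $r^\sharp(a^* \circ b^*) = r^\sharp(a^*)\cdot r^\sharp(b^*)$ for all $a^*,b^* \in A^*$. This is exactly the statement that $r^\sharp$ is a homomorphism from $(A^*,\circ)$ to $(A,\cdot)$, so the corollary follows.

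There is essentially no obstacle here: the substantive computation has already been packaged into the identity \eqref{eq:rshacircb}, and this corollary merely specializes Proposition~\ref{coadsharp}(1) by naming the coalgebra-dual multiplication $\circ$ explicitly through \eqref{eq:acircb}. The only point that genuinely requires care is confirming that ``$\circ$ given by \eqref{eq:acircb}'' coincides with the dual multiplication of $\Delta_r$ entering \eqref{eq:rshacircb}, and this identification is precisely what Proposition~\ref{acircb} supplies.
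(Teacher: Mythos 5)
Your proof is correct, but it takes a different route from the paper's. You derive the corollary directly from Proposition \ref{acircb}: since the $\circ$ of the corollary is by hypothesis the one in \eqref{eq:acircb}, it coincides with the linear dual of $\Delta_r$, so \eqref{eq:rshacircb} applies, and $AY(r)=0$ together with nondegeneracy of the pairing yields $r^\sharp(a^*\circ b^*)=r^\sharp(a^*)\cdot r^\sharp(b^*)$. This is in fact exactly the argument the paper already gave for Proposition \ref{coadsharp}(1), so you are correctly observing that the corollary is essentially a restatement of that proposition with $\circ$ named explicitly. The paper's own proof of the corollary instead recomputes $\langle a^*\circ b^*, x\rangle$ using the skew-symmetric form $\omega(x,y)=\langle (r^\sharp)^{-1}(x),y\rangle$ and the Connes-cocycle identity from Theorem \ref{omegasharp}, arriving at $a^*\circ b^*=(r^\sharp)^{-1}\bigl(r^\sharp(a^*)\cdot r^\sharp(b^*)\bigr)$. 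That route has the drawback of implicitly requiring $r^\sharp$ to be invertible (which the corollary's statement does not assume), whereas your argument works for any skew-symmetric solution $r$; on the other hand, the paper's computation exhibits the explicit transport of the multiplication along $r^\sharp$, which is informative when $r^\sharp$ is an isomorphism. Your version is the cleaner and more general of the two.
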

\begin{proof}
For all $a^*,b^* \in A^*, x\in A$, we have
\begin{eqnarray*}
  && \langle a^* \circ b^*, x \rangle
  = \langle R^*(r^\sharp(a^*))b^*+L^*(r^\sharp(b^*))a^*, x \rangle \\
  &\overset{\eqref{eq:ad-dual}}{=}& \langle b^*, x \cdot r^\sharp(a^*) \rangle +  \langle a^*, r^\sharp(b^*) \cdot x \rangle \\
  &\overset{\eqref{eq:omega}}{=}& \omega (r^\sharp(b^*), x \cdot r^\sharp(a^*)) + \omega(r^\sharp(a^*), r^\sharp(b^*) \cdot x) \\
  &\overset{Theorem~ \ref{omegasharp}}{=}& -\omega (x, r^\sharp(a^*) \cdot r^\sharp(b^*)) \\
  &\overset{\eqref{eq:omega}}{=}& \langle (r^\sharp)^{-1}\big(r^\sharp(a^*)\cdot r^\sharp(b^*)\big),x \rangle.
\end{eqnarray*}
Hence, we have
\begin{equation} \label{rsisomor}
  a^* \circ b^* = (r^\sharp)^{-1} (r^\sharp(a^*)\cdot r^\sharp(b^*)), \quad \forall~ a^*,b^* \in A^*.
\end{equation} 
Therefore, $r^\sharp:A^*\rightarrow A$ is a homomorphism of $(A,\cdot)$.
\end{proof}

Let $(A,\cdot)$ be an admissible $A_3$-associative algebra with an adjoint representation $(L_\cdot,R_\cdot,A)$, $A^*$ be the dual space of $A$. Let $T:A\rightarrow A$ be a linear map, $T^*:A^* \rightarrow A^*$ be the dual map of $T$, defined by
\begin{equation} \label{eq:TT}
  \langle T^*(a^*),x \rangle = \langle a^*, T(x) \rangle, \quad \forall~ a^*\in A^*, x\in A.
\end{equation}

\begin{lemma} \label{TT:rRB}
Let $(A,\cdot)$ be an admissible $A_3$-associative algebra with an adjoint representation $(L_\cdot,R_\cdot,A)$. Let $\widetilde{T}: (A\oplus A^*)^* \rightarrow A\oplus A^*$ be a linear map defined as $\widetilde{T}(a^*+x)= T^*(a^*) -T(x)$. 
Let $D=A\oplus A^*$ and $\cdot_D: D\otimes D \rightarrow D$ be a linear map defined as 
\begin{equation} \label{eq:semi-TTm}
  (x+a^*)\cdot_D(y+b^*) = x\cdot y + R_\cdot^*(x)b^* + L_\cdot^*(y)a^*, \quad \forall~ x,y\in A, a^*,b^*\in A^*.
\end{equation}
Then $T:A\rightarrow A$ is a relative Rota-Baxter operator of $(A,\cdot)$ associated to $(L_\cdot,R_\cdot,A)$ if and only if $\widetilde{T}: D^* \rightarrow D$ is a relative Rota-Baxter operator of $(D,\cdot_D)$ associated to $(R_D^*,L_D^*,D^*)$.
\end{lemma}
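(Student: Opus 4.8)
The plan is to expand the defining relative Rota--Baxter identity for $\widetilde{T}$ along the decomposition $D = A \oplus A^*$ and to recognize each homogeneous component as the relative Rota--Baxter identity \eqref{eq:T-RBo} for $T$. Throughout I identify $D^* = (A \oplus A^*)^*$ with $A^* \oplus A$, so that $\widetilde{T}(a^* + x) = T^*(a^*) - T(x) \in A \oplus A^* = D$.

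First I would record the coadjoint actions of $D$ explicitly. A necessary preliminary is that $(R_D^*, L_D^*, D^*)$ is a genuine coadjoint representation, i.e.\ that $(D, \cdot_D)$ is admissible: since $(A,\cdot)$ is admissible, the adjoint representation $(L_\cdot, R_\cdot, A)$ and hence, by Proposition \ref{dual-ad-repre}, the coadjoint representation $(R_\cdot^*, L_\cdot^*, A^*)$ are admissible, and an argument parallel to Proposition \ref{adm-semi-dp} shows that the semidirect product $D = A \ltimes_{R_\cdot^*, L_\cdot^*} A^*$ given by \eqref{eq:semi-TTm} is again admissible. Using \eqref{eq:ad-dual}, \eqref{eq:semi-TTm} and the pairing of $D^* = A^* \oplus A$ with $D$, one then computes for $w + f^* \in D$ and $a^* + x, b^* + y \in D^*$ that
\begin{align*}
  R_D^*(w + f^*)(b^* + y) &= \big(R_\cdot^*(w)b^* + L_\cdot^*(y)f^*\big) + w \cdot y, \\
  L_D^*(w + f^*)(a^* + x) &= \big(L_\cdot^*(w)a^* + R_\cdot^*(x)f^*\big) + x \cdot w.
\end{align*}

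Next I would substitute $\alpha = a^* + x$ and $\beta = b^* + y$ into the defining identity $\widetilde{T}(\alpha) \cdot_D \widetilde{T}(\beta) = \widetilde{T}\big(R_D^*(\widetilde{T}\alpha)\beta + L_D^*(\widetilde{T}\beta)\alpha\big)$ from Definition \ref{R-Boperator}. Expanding the left-hand side by \eqref{eq:semi-TTm} produces the $A$-component $T(x) \cdot T(y)$ together with the $A^*$-component $-R_\cdot^*(T(x))T^*(b^*) - L_\cdot^*(T(y))T^*(a^*)$; expanding the right-hand side with the two coadjoint formulas above and $\widetilde{T}(a^* + x) = T^*(a^*) - T(x)$ produces the $A$-component $T\big(T(x) \cdot y + x \cdot T(y)\big)$ together with an $A^*$-component of the form $T^*(\cdots)$. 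Comparing the $A$-components gives exactly \eqref{eq:T-RBo}; in particular, if $\widetilde{T}$ is a relative Rota--Baxter operator then its $A$-component already forces $T$ to be one, which settles the implication $\widetilde{T}\text{ RB}\Rightarrow T\text{ RB}$.

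It remains to handle the $A^*$-component, which I expect to be the main obstacle since it is not visibly a Rota--Baxter identity. I would pair it with an arbitrary $z \in A$, rewrite every term through \eqref{eq:ad-dual} and \eqref{eq:TT} so that all operators land on elements of $A$, and then use that $a^*$ and $b^*$ are independent to split the resulting scalar equation into its $a^*$-part and $b^*$-part. Each part collapses to $T(u) \cdot T(v) = T\big(T(u) \cdot v + u \cdot T(v)\big)$ for a pair $(u,v) \in \{(z,x),(y,z)\}$, i.e.\ to \eqref{eq:T-RBo} again. Hence the $A^*$-component holds precisely when $T$ is a relative Rota--Baxter operator, and combining it with the $A$-component gives that $\widetilde{T}$ satisfies \eqref{eq:T-RBo} for $(D,\cdot_D)$ and $(R_D^*, L_D^*, D^*)$ if and only if $T$ does for $(A,\cdot)$ and $(L_\cdot, R_\cdot, A)$, completing the reverse implication as well.
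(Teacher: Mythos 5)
Your proposal is correct and follows essentially the same route as the paper: expand the Rota--Baxter identity for $\widetilde{T}$ along $D=A\oplus A^*$, identify the $A$-component with \eqref{eq:T-RBo} for the pair $(x,y)$, and pair the $A^*$-component with an arbitrary $z\in A$ so that its $a^*$- and $b^*$-parts collapse to \eqref{eq:T-RBo} for the pairs $(y,z)$ and $(z,x)$ --- exactly the paper's groupings $(Eq15)$, $(Eq14)$, $(Eq13)$. The only cosmetic difference is that you record the full coadjoint action of $D$ on $D^*$ up front, whereas the paper only derives the restricted identities $R_D^*(T(x))y=L_\cdot(T(x))y$ and $L_D^*(T(y))x=R_\cdot(T(y))x$ that it needs.
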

\begin{proof}
By Proposition \ref{adm-semi-dp}, $(D,\cdot_D)$ is an admissible $A_3$-associative algebra, we denote the adjoint representation of $(D,\cdot_D)$ by $(R_D^*,L_D^*,D^*)$.
Next, we prove that 
$\widetilde{T}: D^* \rightarrow D$ is a relative Rota-Baxter operator of $(D,\cdot_D)$ associated to $(R_D^*,L_D^*,D^*)$.
For all $a^*, b^*,c^* \in A^*, x,y\in A$, we have
\begin{eqnarray*}
  \langle R_D^*\big(T(x)\big)y, c^* \rangle = \langle y, c^* \cdot_D T(x) \rangle 
  \overset{\eqref{eq:semi-TTm}}{=} \langle y, L_\cdot^*\big(T(x)\big)c^* \rangle 
  = \langle L_\cdot\big(T(x)\big)y, c^* \rangle. 
\end{eqnarray*}
Then $R_D^*\big(T(x)\big)y=L_\cdot\big(T(x)\big)y$.
Similarly, $L_D^*\big(T(y)\big)x=R_\cdot\big(T(y)\big)x$. Then we have
\begin{eqnarray*}
  && \widetilde{T}(x+a^*) \cdot_D \widetilde{T}(y+b^*) - \widetilde{T} \Big(R_D^*\big(\widetilde{T}(x+a^*)\big)(y+b^*) +L_D^*\big(\widetilde{T} (y+b^*)\big)(x+a^*)\Big) \\
  &=& \big(\widetilde{T}(x)+\widetilde{T}(a^*)\big) \cdot_D \big(\widetilde{T}(y)+\widetilde{T}(b^*)\big) -\widetilde{T} \Big(R_D^* \big(-T(x)+T^*(a^*)\big)(y+b^*) \\
  && +L_D^*\big(-T(y)+T^*(b^*)\big)(x+a^*)\Big) \\
  &\overset{\eqref{eq:semi-TTm}}{=}& \big(-T(x)\big)\cdot \big(-T(y)\big) + R_\cdot^*\big(-T(x)\big)\big(T^*(b^*)\big) +L_\cdot^* \big(-T(y)\big)\big(T^*(a^*)\big) \\
  && -\widetilde{T}\Big(-R_D^*\big(T(x)\big)v - R_D^*\big(T(x)\big)b^* + R_D^*\big(T^*(a^*)\big)y -L_D^*\big(T(y)\big)x -L_D^*\big(T(y)\big)a^* \\
  && +L_D^*\big(T^*(b^*)\big)x\Big) \\
  &=& \underbrace{T(x) \cdot T(y)}_{(e1)} -\underbrace{R_\cdot^*\big(T(x)\big)\big(T^*(b^*)\big)}_{(e2)} -\underbrace{ L_\cdot^*\big(T(y)\big)\big(T^*(a^*)\big)}_{(e3)} -\underbrace{T\big(L_\cdot(T(x))y\big)}_{(e4)} \\
  && +\underbrace{T^*\Big(R_D^*\big(T(x)\big)b^*\Big)}_{(e5)} -\underbrace{T^*\Big(R_D^*\big(T^*(a^*)\big)y\Big)}_{(e6)} -\underbrace{ T\Big(R_\cdot\big(T(y)\big)x\Big)}_{(e7)} +\underbrace{T^*\Big(L_D^*\big(T(y)\big)a^*\Big)}_{(e8)} \\
  && -\underbrace{T^*\Big(L_D^*\big(T^*(b^*)\big)x\Big)}_{(e9)}.
\end{eqnarray*}
Take the three items from the above respectively and let
\begin{eqnarray*}
  (Eq13) := -(e2) +(e5) -(e9),~ (Eq14) := -(e3) -(e6) +(e8),~ (Eq15) := (e1) -(e4) -(e7).
\end{eqnarray*}
For all $z\in A$, we have
\begin{eqnarray*}
  &&\langle (Eq13), z \rangle =
  \langle - R_\cdot^*\big(T(x)\big)\big(T^*(b^*)\big) +T^*\Big(R_D^*\big(T(x)\big)b^*\Big) - T^*\Big(L_D^*\big(T^*(b^*)\big)x\Big), z \rangle \\
  &\overset{\eqref{eq:TT}}{=}& -\langle T^*(b^*), R_\cdot\big(T(x)\big)z \rangle + \langle R_D^*\big(T(x)\big)b^*, T(z) \rangle - \langle L_D^*\big(T^*(b^*)\big)x, T(z) \rangle \\
  &\overset{\eqref{eq:TT}}{=}& -\langle b^*, T\Big(R_\cdot\big(T(x)\big)z\Big) \rangle + \langle b^*, T(z)\cdot T(x) \rangle - \langle x, T^*(b^*) \cdot_D T(z) \rangle \\
  &\overset{\eqref{eq:semi-TTm}}{=}& -\langle b^*, T\Big(R_\cdot\big(T(x)\big)z\Big) \rangle + \langle b^*, T(z)\cdot T(x) \rangle - \langle x, L_\cdot^*\big(T(z)\big) T^*(b^*) \rangle \\
  &=& -\langle b^*, T\Big(R_\cdot\big(T(x)\big)z\Big) \rangle + \langle b^*, T(z)\cdot T(x) \rangle - \langle T\Big(L_\cdot\big(T(z)\big)x\Big), b^* \rangle.
\end{eqnarray*}
Similarly, we have
\begin{eqnarray*}
  \langle (Eq14), z \rangle 
  = -\langle a^*, T\Big(L_\cdot\big(T(y)\big)z\Big) \rangle + \langle a^*, T(y)\cdot T(z) \rangle - \langle T\Big(R_\cdot\big(T(z)\big)y\Big), a^* \rangle.
\end{eqnarray*}
Because $T$ is a relative Rota-Baxter operator of $(A,\cdot)$ associated to $(L_\cdot,R_\cdot,A)$, then 
$$(Eq13) =(Eq14) =(Eq15) =0.$$
Hence, $\widetilde{T}: D^* \rightarrow D$ is a relative Rota-Baxter operator of $(D,\cdot_D)$ associated to $(R_D^*,L_D^*,D^*)$.
Conversely, if $\widetilde{T}: D^* \rightarrow D$ is a relative Rota-Baxter operator of $(D,\cdot_D)$ associated to $(R_D^*,L_D^*,D^*)$, then 
$$(Eq13) =(Eq14) =(Eq15) =0,$$
that is, $T$ is a relative Rota-Baxter operator of $(A,\cdot)$ associated to $(L_\cdot,R_\cdot,A)$.
Thus, the conclusion holds.
\end{proof}

\begin{theorem}
Let $(A,\cdot)$ be an admissible $A_3$-associative algebra with an adjoint representation $(L_\cdot,R_\cdot,A)$. Let $\widetilde{T}: (A\oplus A^*)^* \rightarrow A\oplus A^*$ be a linear map defined as $\widetilde{T}(a^*+x)= T^*(a^*) -T(x)$, $T:A\rightarrow A$ be a linear map, which is identified as an element in $ A\otimes A^* \hookrightarrow (A\ltimes_{R_\cdot^*,L_\cdot^*} A^*) \otimes (A\ltimes_{R_\cdot^*,L_\cdot^*} A^*)$, given by
\begin{equation}
  \langle T, (a^*+x)\otimes(b^*+y) \rangle = \langle \widetilde{T}(a^*+x),b^*+y \rangle, \quad \forall~ x,y\in A, a^*,b^*\in A^*.
\end{equation}
Let $D=A\oplus A^*$ and $\cdot_D: D\otimes D \rightarrow D$ be a linear map defined as \eqref{eq:semi-TTm}.
Then $T:A\rightarrow A$ is a relative Rota-Baxter operator of $(A,\cdot)$ associated to $(L_\cdot,R_\cdot,A)$ if and only if $r=T -\tau(T)$ is a skew-symmetric solution of the $A_3$-associative Yang-Baxter equation in $A\ltimes_{R_\cdot^*,L_\cdot^*} A^*$.
\end{theorem}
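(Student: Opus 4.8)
The plan is to deduce the statement by chaining two results already proved: Lemma~\ref{TT:rRB}, which relates the relative Rota-Baxter condition for $T$ on $(A,\cdot)$ to that for $\widetilde{T}$ on the double, and Proposition~\ref{coadsharp}(2), which relates skew-symmetric solutions of the $A_3$-associative Yang-Baxter equation to relative Rota-Baxter operators attached to a coadjoint representation. First I would record that, by Proposition~\ref{adm-semi-dp}, the semi-direct product $D=A\ltimes_{R_\cdot^*,L_\cdot^*}A^*$ with the multiplication $\cdot_D$ of \eqref{eq:semi-TTm} is again an \emph{admissible} $A_3$-associative algebra; hence both cited results may be applied verbatim with $(A,\cdot)$ replaced by $(D,\cdot_D)$, whose coadjoint representation is exactly the triple $(R_D^*,L_D^*,D^*)$ occurring in Lemma~\ref{TT:rRB}. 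I would also note immediately that $r=T-\tau(T)$ is skew-symmetric, since $\tau(r)=\tau(T)-T=-r$, so that Proposition~\ref{coadsharp} is applicable to $r\in D\otimes D$.

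The technical heart of the argument is the identification $r^\sharp=\widetilde{T}$ as maps $D^*\to D$. Writing generic elements of $D^*=A^*\oplus A$ as $\xi=a^*+x$ and $\eta=b^*+y$, I would unwind $\langle r^\sharp(\xi),\eta\rangle=\langle r,\xi\otimes\eta\rangle=\langle T,\xi\otimes\eta\rangle-\langle\tau(T),\xi\otimes\eta\rangle$ using the natural pairing between $D$ and $D^*$, the identification of $T\in A\otimes A^*$ with the linear map $T$, and the definitions \eqref{eq:rsharp} and \eqref{eq:TT}. The two terms evaluate to $\langle a^*,T(y)\rangle$ and $\langle b^*,T(x)\rangle$ respectively, so that $\langle r^\sharp(\xi),\eta\rangle=\langle a^*,T(y)\rangle-\langle b^*,T(x)\rangle=\langle T^*(a^*)-T(x),b^*+y\rangle=\langle\widetilde{T}(\xi),\eta\rangle$, which is the defining pairing recorded in the statement. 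Hence $r^\sharp=\widetilde{T}$, and in passing one sees that $\widetilde{T}$ is skew-symmetric as a bilinear form on $D^*$, consistent with $r$ being a skew-symmetric tensor. The main obstacle, and the step demanding the most care, is precisely this bookkeeping: one must keep straight the four summands coming from the $A$- and $A^*$-parts of $D$ and of $D^*$ and check that the cross terms reassemble into $\widetilde{T}(\xi)=T^*(a^*)-T(x)$ without a stray sign or factor.

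With $r^\sharp=\widetilde{T}$ established, the remainder is a direct composition of equivalences. Applying Proposition~\ref{coadsharp}(2) to the admissible algebra $(D,\cdot_D)$ and the skew-symmetric tensor $r$, the condition that $r$ solve the $A_3$-associative Yang-Baxter equation in $D$ (equivalently, that $(D,\cdot_D,\Delta_r)$ be triangular) is equivalent to $r^\sharp=\widetilde{T}$ being a relative Rota-Baxter operator of $(D,\cdot_D)$ associated to $(R_D^*,L_D^*,D^*)$. By Lemma~\ref{TT:rRB} this is in turn equivalent to $T$ being a relative Rota-Baxter operator of $(A,\cdot)$ associated to the adjoint representation $(L_\cdot,R_\cdot,A)$. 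Composing the two equivalences gives precisely the claim: $T$ is a relative Rota-Baxter operator on $(A,\cdot)$ if and only if $r=T-\tau(T)$ is a skew-symmetric solution of the $A_3$-associative Yang-Baxter equation in $A\ltimes_{R_\cdot^*,L_\cdot^*}A^*$.
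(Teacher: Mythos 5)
Your proposal is correct and follows essentially the same route as the paper: chain Lemma \ref{TT:rRB} with Proposition \ref{coadsharp}(2) through the identification $r^\sharp=\widetilde{T}$ under $\Hom_{\mathbb{K}}(A,A)\cong A\otimes A^*\hookrightarrow D\otimes D$. The only difference is that you carry out the pairing computation showing $\langle r^\sharp(a^*+x),b^*+y\rangle=\langle a^*,T(y)\rangle-\langle b^*,T(x)\rangle=\langle\widetilde{T}(a^*+x),b^*+y\rangle$ explicitly, which the paper simply asserts by "taking $r^\sharp=\widetilde{T}$"; this is a welcome amount of extra detail, not a different argument.
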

\begin{proof}
By Lemma \ref{TT:rRB}, $T:A\rightarrow A$ is a relative Rota-Baxter operator of $(A,\cdot)$ associated to $(L_\cdot,R_\cdot,A)$ if and only if
$\widetilde{T}: D^* \rightarrow D$ is a relative Rota-Baxter operator of $(D,\cdot_D)$ associated to $(R_D^*,L_D^*,D^*)$.

Identify a linear map $T:A \rightarrow A$ as an element in $D\otimes D$ through the injective map
$$Hom_\mathbb{K} (A,A) \cong A\otimes A^* \hookrightarrow D\otimes D.$$
By taking $r^\sharp =\widetilde{T} \in D\otimes D$ and Proposition \ref{coadsharp} (2), it follows that $r=T -\tau(T)$ is a solution of the $A_3$-associative Yang-Baxter equation in $(D,\cdot_D)$ if and only if $\widetilde{T}: (A\oplus A^*)^* \rightarrow A\oplus A^*$ is a relative Rota-Baxter operator of $(D,\cdot_D)$ associated to $(R_D^*,L_D^*,D^*)$.
Therefore, the conclusion holds.
\end{proof}

{\bf Acknowledgements:}
 This work is supported by the Natural Science Foundation of Zhejiang Province (Grant No. LZ25A010002). The second author would like to thank Guilai Liu for
helpful discussions on  $A_3$-associative algebras.


\begin{thebibliography}{99}

\bibitem{Aguiar1}
M. Aguiar, Infinitesimal Hopf algebras, in: New Trends in Hopf Algebra Theory, La Falda, 1999, in: Contemp. Math., vol. 267, Amer. Math. Soc., Providence, RI, (2000), 1-29.

\bibitem{Aguiar2}
M. Aguiar, Pre-poisson algebras, \emph{Lett. Math. Phys.}, {\bf 54} (2000), no.4, 263-277.

\bibitem{Aguiar3}
M. Aguiar, On the associative analog of Lie bialgebras, \emph{J. Algebra}, {\bf 244} (2001), no.2, 492-532.

\bibitem{Albert}
A. A. Albert, Power-associative rings, \emph{Trans. Amer. Math. Soc.}, {\bf 64} (1948), no.3, 552-593.

\bibitem{Ammar}
F. Ammar and A. Makhlouf, Hom-Lie superalgebras and Hom-Lie admissible superalgebras, \emph{J. Algebra}, {\bf 324} (2010), no.7, 1513-1528.

\bibitem{BaiC7}
C. Bai, A unified algebraic approach to the classical Yang-Baxter equation, \emph{J. Phys. A}, {\bf 40} (2007), no.36, 11073-11082.

\bibitem{BaiC6}
C. Bai, Left-symmetric bialgebras and an analogue of the classical Yang-Baxter equation, \emph{Commun. Contemp. Math.}, {\bf 10} (2008), no.2, 221-260.

\bibitem{BaiC2}
C. Bai, Double constructions of Frobenius algebras, Connes cocycles and their duality, \emph{J. Noncommut. Geom.}, {\bf 4} (2010), no.4, 475-530.

\bibitem{BaiC8}
C. Bai, L. Guo and X. Ni, Nonabelian generalized Lax pairs, the classical Yang-Baxter equation and PostLie algebras, \emph{Comm. Math. Phys.}, {\bf 297} (2010), no.2, 553-596.

\bibitem{BaiC5}
C. Bai, L. Guo and T. Ma, Bialgebras, Frobenius algebras and associative Yang-Baxter equations for Rota-Baxter algebras, \emph{Asian J. Math.}, {\bf 28} (2024), no.3, 411-436.

\bibitem{Bai-Chen} C. Bai and Z. Chen, Left invariant metrics on Lie groups associated with G-associative algebras,
\emph{J. Lie Theory} {\bf 23} (2013), no. 3, 731-745.

\bibitem{Benali}
K. Benali, T. Chtioui, A. Hajjaji and S. Mabrouk, Bialgebras, the Yang-Baxter equation and Manin triples for mock-Lie algebras, \emph{Acta Comment. Univ. Tartu. Math.}, {\bf 27} (2023), no.2, 211-233.

\bibitem{Benkart2}
G. M. Benkart and J. M. Osborn, Flexible Lie-admissible algebras, \emph{J. Algebra}, {\bf 71} (1981), no.1, 11-31.

\bibitem{Benkart1}
G. M. Benkart, Power-associative Lie-admissible algebras, \emph{J. Algebra}, {\bf 90} (1984), no.1, 37-58.

\bibitem{Chari}
V. Chari and A. Pressley, A Guide to Quantum Groups, Cambridge University Press, Cambridge, (1994).

\bibitem{ChenY}
Y. Chen, Ivan Shestakov and Zerui Zhang, Free Lie-admissible algebras and an analogue of the PBW theorem, \emph{J. Algebra}, {\bf 590} (2022), 234-253.

\bibitem{Mafoya} M. L. Dassoundo, C. Bai, and M. N. Hounkonnou,
Anti-flexible bialgebras,
\emph {J. Algebra Appl.} {\bf 21} (2022), no. 11, Paper No. 2250212, 22 pp.

\bibitem{Drinfeld1}
V. G. Drinfeld, Hamiltonian structures on Lie groups, Lie bialgebras and the geometric meaning of the classical Yang-Baxter equations, \emph{Dokl. Akad. Nauk SSSR}, {\bf 268} (1983), no.2, 285-287.

\bibitem{Drinfeld2}
V. G. Drinfeld, Quantum groups, American Mathematical Society, Providence, (1987).


\bibitem{Faddeev}
L. D. Faddeev and L. A. Takhtajan, Hamiltonian methods in the theory of solitons, Springer, Berlin, (1987).


\bibitem{Gerstenhaber}
M. Gerstenhaber, The cohomology structure of an associative ring, \emph{Ann. of Math.}, {\bf 78} (1963), 267-288.

\bibitem{Goze1}
M. Goze and E. Remm, Lie-admissible algebras and operads, \emph{J. Algebra}, {\bf 273} (2004), no.1, 129-152.

\bibitem{Goze2}
M. Goze and E. Remm, Poisson algebras in terms of non-associative algebras, \emph{J. Algebra}, {\bf 320} (2008), no.1, 294-317.

\bibitem{GuoS}
S. Guo, X. Zhang and S. Wang, The construction of Hom-left-symmetric conformal bialgebras, \emph{Linear Multilinear Algebra}, {\bf 68} (2020), no.6, 1257-1276.

\bibitem{HongY1}
Y. Hong and F. Li, On left-symmetric conformal bialgebras, \emph{J. Algebra Appl.}, {\bf 14} (2015), no.1, 1450079, 37 pp.


\bibitem{Jeong}
K. Jeong, S.-J. Kang and H. Lee, Lie-admissible algebras and Kac-Moody algebras, \emph{J. Algebra}, {\bf 197} (1997), no.2, 492-505.

\bibitem{Joni}
S. A. Joni and G.-C. Rota, Coalgebras and bialgebras in combinatorics, \emph{Stud. Appl. Math.}, {\bf 61} (1979), no.2, 93-139.

\bibitem{KangC} C. Kang, G. Liu, Z. Wang and S. Yu, Manin Triples and bialgebras of Left-Alia algebras associated with invariant theory, \emph{Mathematics}, {\bf 12} (2024), no.3, 408, 16 pp.

\bibitem{Koszul}
J.-L. Koszul, Domaines born\'{e}s homog\`{e}nes et orbites de groupes de transformations affines, \emph{Bull. Soc. Math. France}, {\bf 89} (1961), 515-533.

\bibitem{LiB}
B. Li and D. Wang, Novikov Poisson bialgebra, \emph{J. Geom. Phys.}, {\bf 209} (2025), Paper No.105403, 15 pp.

\bibitem{LiY}
Y. Li and D. Wang, Relative Rota-Baxter operators on Hom-Lie triple systems, \emph{Comm. Algebra}, {\bf 52} (2024), no.3, 1163-1178.

\bibitem{Liang}
J. Liang, J. Liu and C. Bai, Admissible Poisson bialgebras, \emph{Internat. J. Math.}, {\bf 32} (2021), no.14, Paper No.2150106, 35 pp.

\bibitem{LiuG1}
G. Liu and C. Bai, A bialgebra theory for transposed Poisson algebras via anti-pre-Lie bialgebras and anti-pre-Lie Poisson bialgebras, \emph{Commun. Contemp. Math.}, {\bf 26} (2024), no.8, Paper No.2350050, 49 pp.

\bibitem{LiuG2}
G. Liu and C. Bai, Relative Poisson bialgebras and Frobenius Jacobi algebras, \emph{J. Noncommut. Geom.}, {\bf 18} (2024), no.3, 995-1039.

\bibitem{Liul1}
L. Liu, A. Makhlouf, C. Menini and F. Panaite, $\{\sigma,\tau\}$-Rota-Baxter operators, infinitesimal Hom-bialgebras and the associative (Bi)Hom-Yang-Baxter equation, \emph{Canad. Math. Bull.}, {\bf 62} (2019), no.2, 355-372.

\bibitem{Liul2}
L. Liu, A. Makhlouf, C. Menini and F. Panaite, BiHom-Novikov algebras and infinitesimal BiHom-bialgebras, \emph{J. Algebra}, {\bf 560} (2020), 1146-1172.


\bibitem{Makhlouf}
A. Makhlouf and S. D. Silvestrov, Hom-algebra structures, \emph{J. Gen. Lie Theory Appl.}, {\bf 2} (2008), no.2, 51-64.

\bibitem{Ni}
X. Ni and C. Bai, Poisson bialgebras, \emph{J. Math. Phys.}, {\bf 54} (2013), no.2, 023515, 14 pp.


\bibitem{Ospel}
C. Ospel, F. Panaite and P. Vanhaecke, Polarization and deformations of generalized dendriform algebras, \emph{J. Noncommut. Geom.}, {\bf 16} (2022), no.2, 561-594.

\bibitem{Remm1}
E. Remm, Weakly associative algebras, Poisson algebras and deformation quantization, \emph{Comm. Algebra}, {\bf 49} (2021), no.9, 3881-3904.


\bibitem{Rezaei}
A. Rezaei-Aghdam, L. Sedghi-Ghadim and G. Haghighatdoost, Leibniz bialgebras, classical Yang-Baxter equations and dynamical systems, \emph{Adv. Appl. Clifford Algebr.}, {\bf 31} (2021), no.5, Paper No.77, 20 pp.

\bibitem{ShengY2}
Y. Sheng and J. Zhao, Relative Rota-Baxter operators and symplectic structures on Lie-Yamaguti algebras, \emph{Comm. Algebra}, {\bf 50} (2022), no.9, 4056-4073.

\bibitem{SunQ}
Q. Sun and H. Li, On paraK\"{a}hler hom-Lie algebras and hom-left-symmetric bialgebras, \emph{Comm. Algebra}, {\bf 45} (2017), no.1, 105-120.

\bibitem{Tang}R. Tang and Y. Sheng, Leibniz bialgebras, relative Rota-Baxter operators and the classical Leibniz Yang-Baxter equation,
\emph{J. Noncommut. Geom.} {\bf 16} (2022), no. 4, 1179-1211.

\bibitem{Vinberg}
E. B. Vinberg, The theory of homogeneous convex cones, \emph{Trudy Moskov. Mat. Ob$\check{s}$$\check{c}$.}, {\bf 12} (1963), 303-358.


\end{thebibliography}
\end{document}